\documentclass{article}
\usepackage{graphicx,geometry,amsmath,amsthm,amssymb,mathdots,authblk,hyperref} 
\usepackage{mathrsfs}
\usepackage{array}

\newtheorem{theorem}{Theorem}[section]
\newtheorem{lemma}[theorem]{Lemma}

\newtheorem{remark}[theorem]{Remark}

\numberwithin{equation}{section}

\title{The Mean Field Equation on the Tate Curve}
\author{Yaojia Sun}
\date{}

\begin{document}

\maketitle
\begin{abstract}
    In this paper, we study the spectrum of the Laplacian on the Tate curve and construct the associated Green's function as a finite sum, which can be viewed as the non-Archimedean counterpart of the Green's function on the flat torus in the Archimedean case. Moreover, we establish existence and uniqueness results for the mean field equation on this space. To address the problem, we first analyze the structure of solutions on finite quotients, and then establish existence on the Tate curve by passing to the limit along these solutions. We also prove uniqueness of solutions in a certain parameter range. Notably, the well-posedness of the solution resembles that in the Archimedean case.
\end{abstract}
\section{Introduction}
The mean field equation,
\begin{align}
\label{eq:Archimedean_mean_field_equation}
    \Delta u+\rho e^u=\rho\delta_0,\ \rho>0
\end{align}
rooted in statistical mechanics, is formally analogous to the Liouville equation. However, the latter originates from the prescribed Gaussian curvature problem and is typically formulated without a point source term on the right-hand side. A more general formulation is given by the Kazdan-Warner equation \cite{MR343206}. If $\rho=4\pi$, equation \eqref{eq:Archimedean_mean_field_equation} is related to the Chern-Simons model \cite{chen2004concentration}. On special domains such as $\Omega\subseteq\mathbb{R}^n$ or compact manifolds, the existence and uniqueness of solutions have been extensively studied in \cite{chen2003topological,MR2352519,lin2013existence,lin2011uniqueness,chen2015mean,chen2017existence,MR4366002,kuo2022even}.

The existence of solutions to this class of equations on graphs have been widely investigated. Grigor'yan, Lin, and Yang \cite{grigoryan2016kazdan} provided various conditions for the existence of the Kazdan-Warner equation on finite graphs. Ge and Jiang \cite{ge2018kazdan} further proved the existence of solutions on infinite graphs.

Huang-Lin-Yau \cite{huang2020existence} established the existence of solutions to the mean field equation on finite weighted graphs via the calculus of variations. Furthermore, Lin and Yang introduced a mean-field type heat flow on finite graphs \cite{MR4305428}. In the same year, Liu studied the mean-field equation with a positive prescribed function by using the method of Brouwer degree \cite{MR4490503}. Later, Sun and Wang promoted their results to the case of sign-changing prescribed functions \cite{MR4416135}. Subsequently, Liu and Zhang further developed the heat flow method and resolved the convergence problem for the mean-field equation with a sign-changing prescribed function \cite{MR4607589}. More recently, Li and Zhang systematically investigated the mean-field equation with a linear term by means of variational methods \cite{MR4901552}. Liu further extended these results to nonlocal operators via considering the fractional Laplacian $(-\Delta)^s$, thereby unifying and generalizing the results of \cite{MR4305428,MR4416135,MR4607589} for the case $s=1$.

Lin and Wang \cite{lin2010elliptic,lin2010function} studied the mean field equation on the flat torus $T=\mathbb{C}/(\mathbb{Z}+\mathbb{Z}\tau)$ for $\tau\in\mathbb{C}\backslash\mathbb{R}$. If $\rho\neq8m\pi$ for $m\in\mathbb{Z}$, there always exists a solution as the Leray-Schauder degree is nonzero, which is independent of the torus shape $\tau$. When $\rho=8\pi$, they demonstrated that the existence and uniqueness of solutions depend on the number of critical points of the Green's function
\begin{align}
    \Delta G(z,w)=\delta_w(z)-\frac{1}{V},
\end{align}
where $V$ denotes the volume of the torus $T$. In particular, the mean field equation admits a unique solution if and only if the Green's function possesses five critical points (otherwise it has only three).

Since these Liouville-type equations have numerous applications in quantum field theory and string theory \cite{zamolodchikov1996conformal,harlow2011analytic,benjamin2025resurgence}, and the Green’s function underlies the computation of Veneziano amplitudes, it is natural to investigate a $p$-adic analogue of this problem. Background on $p$-adic pseudo-differential operators can be found in \cite{MR3586737}.

In the genus-zero case, Huang, Stoica, Yau and Zhong \cite{huang2021greens,huang2022quadraticreciprocityfamilyadelic} defined the $p$-adic version of the flat Laplacian via regularizing Vladimirov derivation as
\begin{align*}
    D^su(x)=\int_{\mathbb{Q}_p}\frac{u(z)-u(x)}{|z-x|_p^{1+s}}dz
\end{align*}
and constructed the associated Green's function, where $dz$ is the Haar measure on the $p$-adic field $\mathbb{Q}_p$. In addition, on the upper half-plane, Hassan \cite{hassan2025padichighergreensfunctions} constructed the higher Green's function.

In the genus-one case, Huang-Rohrlich-Sun-Whyman \cite{huang2025greensfunctiontatecurve} defined a $p$-adic version of the flat Laplacian on the Tate curve as
\begin{align*}
    Du(x)=|x|_p\int_{\mathbb{Q}_p^{\times}/q^{\mathbb{Z}}}\frac{u(z)-u(x)}{|z-x|_p^2}dz,
\end{align*}
where $|q|_p<1$. One readily checks that the operator $D$ is self-adjoint and negative semi-definite under the inner product induced by the Haar measure $\frac{dx}{|x|_p}$ on $\mathbb{Q}_p^{\times}/q^{\mathbb{Z}}$, one readily verifies that it preserves locally constant functions. They proved that the Green's function only depends on $d(x,y)=\frac{|x-y|_p}{\max\{|x|_p,|y|_p\}}$ and provided a series expression for the Green's function by correcting the logarithmic singularity on the diagonal.

\begin{remark}
    $\mathbb{Q}_p/(\mathbb{Z}+\mathbb{Z}\tau)$ is not a suitable object because $\mathbb{Z}+\mathbb{Z}\tau$ is not closed in $\mathbb{Q}_p$, hence the quotient topology is not Hausdorff. In the Archimedean case, we consider the exponential map: $x\mapsto e^{2\pi ix}$, yielding $\mathbb{C}/(\mathbb{Z}+\mathbb{Z}\tau)\rightarrow\mathbb{C}^{\times}/(e^{2\pi i\tau})^\mathbb{Z}$. Based on that, the Tate curve $\mathbb{Q}_p^{\times}/q^{\mathbb{Z}}$ with $|q|_p<1$ can be viewed as the non-Archimedean analogue of the flat torus.
\end{remark}

\begin{remark}
    Since multiplication by a unit in $\mathbb{Z}_p^*$ leaves $D$ invariant, we may restrict our attention to the case $q=p^m$, and choose the fundamental domain as
$\bigcup_{k=0}^{m-1}p^k\mathbb{Z}_p^*$. One can verify that the definition is independent of the choice of the fundamental domain.
\end{remark}

For more general cases, Bradley derived the Green's function via the heat kernel on a compact domain \cite{bradley2025boundaryvalueproblemspadic} and showed the existence of the heat kernel and the Green's function for the fractional Laplacian with $s>1$ on compact manifolds \cite{bradley2025diffusionoperatorspadicanalytic}.

In this paper, we study the mean field equation and the Green's function on the Tate curve, given respectively by

\begin{equation}
\label{eq:Green's_function}
    DG(x,y)=\delta_y(x)-\frac{1}{V}\ \text{in}\ \mathbb{Q}_p^{\times}/q^{\mathbb{Z}},
\end{equation}

\begin{equation}
\label{eq:mean_field_equation}
    Du+\rho e^{u}=\rho\delta_y\ \text{in}\ \mathbb{Q}_p^{\times}/q^{\mathbb{Z}}.
\end{equation}

A standard approach to constructing the Green’s function employs the spectrum of the operator $D$ and its eigenfunctions. When $m=1$, the fundamental domain is a locally compact group, allowing us to obtain its spectrum and eigenfunctions via characters. Part of this analysis was already carried out in \cite{huang2026glimpseultrametricspectrum}. We extend these results to the general $m$ case through zero extension, although the operator $D$ is a nonlocal, and  we make minor modifications for the case where the eigenvalue is zero. Using orthogonality of the characters, we express the Green's function as a finite sum, which differs from the series formula in \cite{huang2025greensfunctiontatecurve} and is more convenient for computation and interpretation.

Let $\boldsymbol{R}=(1-p^{-1})(\boldsymbol{P}-\text{diag}(\boldsymbol{P}\boldsymbol{1}_m))$, where $(\boldsymbol{P})_{ij}=p^{-|i-j|}$ and $\text{diag}(\boldsymbol{v})$ denotes the diagonal matrix with the vector $\boldsymbol{v}$ as its diagonal entries. We have orthogonal matrix $\boldsymbol{Q}$ to diagonalize $\boldsymbol{R}$, and we define
\begin{align*}
    \phi_{h,c,n}(x)=e^{\frac{2\pi ih}{p-1}\text{ord}(a_0)}e^{2\pi i\left\{\frac{c\ln\left(\frac{x}{a_0}\right)}{p^{n}}\right\}_p},
\end{align*}
where $x=a_0+a_1p+\dots\in\mathbb{Z}_p^*$, $0\leq h\leq p-2$ and $c\in\mathbb{Z}_p^*$ with $cp^{n-1}\in\mathbb{Q}_p/\mathbb{Z}_p$. Then our first main result is as follows:

\begin{theorem}[The spectrum of $D$ and the corresponding Green's function]
    The eigenvalues of $D$ for $\mathbb{Q}_p^{\times}/p^{m\mathbb{Z}}$ are $\lambda_{0,l}\in\sigma(\boldsymbol{R})$ with $\lambda_{0,0}=0$, and $\lambda_{n,l}=-p^{n-1}-p^{n-2}+\frac{p^{-l}+p^{l+1-m}}{p}$ for $n\geq 1$, with the corresponding eigenfunctions $\phi_0^l(x)=\sum_{k=0}^{m-1}q_{k,l}\boldsymbol{1}_{p^{k}\mathbb{Z}_p^*}(x)$ and $\phi_{h,c,n}^l(x)=\phi_{h,c,n}(p^{-l}x)\boldsymbol{1}_{p^{l}\mathbb{Z}_p^*}(x)$, where $l\in\{0,1,2,\dots,m-1\}$. Furthermore, the multiplicities are $m_{0,0}=1$ (provided $\sqrt{p}+\sqrt{p^{1-m}}>2$), $m_{1,l}=p-2$, and $m_{n,l}=(p-1)^2p^{n-2}$ for $n\geq2$.

    And up to a constant $C(|y|_p)$, the corresponding Green's function \eqref{eq:Green's_function} is
    \begin{align*}
        &G(x,y)=\frac{p^{M+1}}{p^{M+1}+p^{M}-(p^{-l}+p^{l+1-m})}-\sum_{n=1}^{M}\frac{(p-1)p^n}{p^{n}+p^{n-1}-(p^{-l}+p^{l+1-m})}+\sum_{k=1}^{m-1}\frac{q_{v_p(x)k}q_{v_p(y)k}}{(1-p^{-1})\lambda_{0,k}}
    \end{align*}
    when $v_p(x)=v_p(y)=l$ and $v_p(d(x,y))=M$.
\end{theorem}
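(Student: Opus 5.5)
The plan is to split $L^2(\mathbb{Q}_p^{\times}/p^{m\mathbb{Z}},\frac{dx}{|x|_p})$ according to the fundamental domain $\bigcup_{k=0}^{m-1}p^k\mathbb{Z}_p^*$. I would write $L^2=V_0\oplus V_1$, where $V_0$ is the span of the $m$ indicators $\boldsymbol{1}_{p^k\mathbb{Z}_p^*}$ and $V_1$ consists of functions with mean zero on every shell $p^k\mathbb{Z}_p^*$.

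\emph{Step 1: the operator on $V_0$.} Take $x\in p^j\mathbb{Z}_p^*$, $z\in p^k\mathbb{Z}_p^*$ with $j\neq k$, and sum over the translates $z q^{t}$. The kernel $|x|_p/|z-x|_p^2$ integrated against $dz/|z|_p$ then depends only on $|j-k|$ (with the periodicity modulo $m$ accounted for). This gives $(1-p^{-1})p^{-|j-k|}$, matching $\boldsymbol{P}$. Hence $D$ restricted to $V_0$ is the matrix $\boldsymbol{R}$, and diagonalizing by $\boldsymbol{Q}$ yields the eigenpairs $(\lambda_{0,l},\phi_0^l)$. Since $\boldsymbol{R}\boldsymbol{1}_m=0$, the constant function gives $\lambda_{0,0}=0$. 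For simplicity of the zero eigenvalue, $\boldsymbol{R}$ is a negative weighted-graph Laplacian with strictly positive off-diagonal entries on a connected graph, so its kernel is one-dimensional. I would check that the hypothesis $\sqrt p+\sqrt{p^{1-m}}>2$ is exactly what separates the zero eigenvalue from the $\lambda_{n,l}$, so that $m_{0,0}=1$ is not spoiled by a coincidence.

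\emph{Step 2: the operator on $V_1$.} For $f$ supported on $p^l\mathbb{Z}_p^*$ with mean zero, the contributions to $Df$ from the other shells vanish. This is because for $x\notin p^l\mathbb{Z}_p^*$ the kernel is constant in $z$ on that shell, so $Df(x)=0$ off the shell. On the shell itself, $Df(x)=|x|_p\int_{p^l\mathbb{Z}_p^*}\frac{f(z)-f(x)}{|z-x|^2}dz-f(x)\cdot(\text{mass of the other shells})$. The second term is a constant times $f(x)$; this is what produces the correction $\frac{p^{-l}+p^{l+1-m}}{p}$, coming from the nearest shells in the two directions, geometric sums in $p^{-1}$.

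For the first term, rescale to $\mathbb{Z}_p^*$. The group $\mathbb{Z}_p^*\cong\mu_{p-1}\times(1+p\mathbb{Z}_p)$, and via $\ln$ the second factor is identified with $p\mathbb{Z}_p$. The characters of this group are exactly $\phi_{h,c,n}$, with conductor $p^n$. A character is an eigenfunction of any convolution operator whose kernel depends only on $|z-x|_p$ (equivalently on $|z/x-1|_p$). I would compute the eigenvalue by splitting $\int$ over the spheres $|z-x|_p=p^{-r}$. The character integrates to full measure for $r\geq n$, to $-$(one sphere's worth) at $r=n-1$, and to $0$ below. This yields $-p^{n-1}-p^{n-2}$, plus the constant from the other shells. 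Counting characters of conductor exactly $p^n$ gives the multiplicities: $p-2$ for $n=1$ (nontrivial $h$, trivial $c$), and $(p-1)\cdot(p-1)p^{n-2}$ for $n\ge 2$. Completeness follows because characters span $L^2(\mathbb{Z}_p^*)$.

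\emph{Step 3: the Green's function.} I would expand
\[
G(x,y)=\sum_{\lambda\neq0}\frac{\phi(x)\overline{\phi(y)}}{\lambda\|\phi\|^2}
\]
over the nonzero eigenfunctions. The $V_0$ part gives the last sum directly, with the normalisation $(1-p^{-1})$ coming from the measure of $\mathbb{Z}_p^*$. For the $V_1$ part, only $l=v_p(x)=v_p(y)$ contributes, and summing the characters of conductor $p^n$ gives a Ramanujan-type sum: it equals $(p-1)p^{n-1}$-type mass times $\boldsymbol{1}_{n\le M}$, minus a boundary term at $n=M+1$, and vanishes for $n>M+1$. This is where $M=v_p(d(x,y))$ enters and the series collapses to the finite sum in the statement. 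The $n=1$ level needs separate care because only $h$ varies there. Any $y$-dependent constant is absorbed into $C(|y|_p)$.

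I expect the main obstacle to be the bookkeeping of the cross-shell constant in Step 2, together with the exact character-sum evaluation at the boundary level $n=M+1$. The first-term formula in the statement arises precisely from this telescoping, so I would verify it carefully for $m=1$ first and then use zero extension to handle general $m$.
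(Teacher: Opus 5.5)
Your route is the paper's. You zero-extend the characters $\phi_{h,c,n}$ of $\mathbb{Z}_p^*$ to each shell $p^l\mathbb{Z}_p^*$, and note that a mean-zero function on one shell is annihilated off that shell and picks up minus the mass of the other shells on it. You then reduce the shell indicators to $\boldsymbol{R}$ and collapse the Green's series with the second orthogonality relation.

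\textbf{Simplicity of the zero eigenvalue.} This is the one genuinely different step. You use that $-\boldsymbol{R}$ is a weighted Laplacian of a connected graph. Together with $\lambda_{n,l}\le\lambda_{1,l}\le -1+p^{-1}<0$, this gives $m_{0,0}=1$ for \emph{all} $p,m$. The paper instead proves, under $\sqrt p+\sqrt{p^{1-m}}>2$, that every eigenvalue of $\boldsymbol{L}$ is simple. Gershgorin makes the diagonal of $\text{diag}(\boldsymbol{P}\boldsymbol{1}_m)-\lambda\boldsymbol{I}$ positive at each eigenvalue $\lambda$. Hence $\boldsymbol{P}^{-1}(\text{diag}(\boldsymbol{P}\boldsymbol{1}_m)-\lambda\boldsymbol{I})$ is tridiagonal with $b_ic_i>0$, and its eigenvalue-$1$ space is at most one-dimensional. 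Your argument is shorter and unconditional for $\lambda_{0,0}$; the paper's additionally gives simplicity of the nonzero $\lambda_{0,l}$. So the check you plan, that the hypothesis is what separates $0$ from the $\lambda_{n,l}$, will not come out that way: that separation is automatic.

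\textbf{Step 1 needs correcting.} Do not sum over the translates $zq^t$. That gives $(1-p^{-1})\sum_{t\in\mathbb{Z}}p^{-|j-k-tm|}=(1-p^{-1})\bigl(p^{-|j-k|}+\frac{p^{j-k}+p^{k-j}}{p^m-1}\bigr)$. This is a cyclically invariant kernel (the one underlying $\bar D$ in the paper's remark), under which the eigenvalues would not depend on $l$. The Toeplitz matrix $\boldsymbol{P}$ and the dependence on $p^{-l}+p^{l+1-m}$ come from integrating only over the fixed domain $\bigcup_{k=0}^{m-1}p^k\mathbb{Z}_p^*$. The integrand is $|x|_p|z|_p/|z-x|_p^2$ against $dz/|z|_p$, equivalently $|x|_p/|z-x|_p^2$ against additive $dz$.

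\textbf{Step 2 needs correcting.} The on-shell sphere decomposition gives $-p^{n-1}-p^{n-2}+\frac{2}{p}$, not $-p^{n-1}-p^{n-2}$. At $r=n-1$ the character integrates to $-\chi(x)\mu(x(1+p^n\mathbb{Z}_p))=-\chi(x)p^{-n}$, minus the inner ball rather than a whole sphere. Also, the sphere $r=0$ inside $\mathbb{Z}_p^*$ has measure $\frac{p-2}{p}$. The other shells contribute $-\frac{2-p^{-l}-p^{l+1-m}}{p}$. Only the sum of the two produces the term $\frac{p^{-l}+p^{l+1-m}}{p}$ in $\lambda_{n,l}$; your planned $m=1$ check would expose this.

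\textbf{The $n=1$ level in the Green's function.} Your extra care here is warranted, and the paper passes over it. The honest count does not reproduce the displayed closed form: there are $p-2$ characters at $n=1$ when $M\ge 1$, and $|H_0|=1$ in the boundary term when $M=0$. In both cases, however, the spectral sum equals the displayed expression plus the same amount, $-\frac{1}{(p-1)\lambda_{1,l}}=\frac{p}{(p-1)(p+1-p^{-l}-p^{l+1-m})}$. This depends only on $l=v_p(y)$, so it is absorbed into $C(|y|_p)$ on the shell $v_p(x)=v_p(y)=l$, which is all the statement asserts.
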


Motivated by the existence results for the Green's function \cite{huang2025greensfunctiontatecurve}, we consider the mean field equation on the finite quotient $\mathbb{Q}_{p}^{\times}/p^{m\mathbb{Z}}/(1+p^d\mathbb{Z}_{p})$ first. The operator $D$ on this finite quotient simplifies to a graph Laplacian, reducing the problem to an equation on a finite weighted graph. In this setting, we prove a universal upper bound for the solution and establish its uniqueness under a suitable condition on the parameter $\rho$. We then study the convergence of these solutions to obtain the existence result on $\mathbb{Q}_{p}^{\times}/p^{m\mathbb{Z}}$. By employing arguments similar to those used for the finite quotient, uniqueness can also also be derived. As in Archimedean case, Lin and Lucia showed that when $T=(0,a)\times(0,b)$ and $\rho\leq\min\{8\pi,\lambda_1(T)V\}$, then the solution is unique, and they conjectured that the conditions for $\rho$ holds for any torus \cite{MR2352519}. Later, Gu-Gui-Hu-Li proved their conjecture \cite{MR4366002}. The optimal upper bound for $\rho$ can be viewed as the analogoue of the $\lambda_1(T)V$ threshold in Archimedean case. Then our second main result is as follows:

\begin{theorem}
    When $\rho\in(0,\rho^*)$, then there exists a unique solution of the mean field equation \eqref{eq:mean_field_equation}, where $\rho^*=\theta_{p,m}e^{-M}$ with
    \begin{align*}
        M=\sup_{u}\left\{\max_{x}u|\ u\text{ is the radially symmetric monotone solution of \eqref{eq:mean_field_equation}}\right\}
    \end{align*}
    and $\theta_{p,m}$ is the smallest nonzero eigenvalue of $-D$. Furthermore, the solution is radially symmetric and strictly monotone.
\end{theorem}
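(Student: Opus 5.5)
The plan has four steps: reduce to the Green's function, solve the problem on finite quotients, pass to the limit, and prove uniqueness by linearization.

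\textbf{Reduction.} Using the Green's function of the first main theorem, I write $u=\rho G(\cdot,y)+w$. Since $DG=\delta_y-\frac1V$, equation \eqref{eq:mean_field_equation} becomes
\begin{align*}
Dw+\rho\left(e^{\rho G(\cdot,y)}e^{w}-\frac1V\right)=0 .
\end{align*}
Integrating over the Tate curve (self-adjointness of $D$ kills $\int Dw$) forces the normalization $\int e^{u}\,\frac{dx}{|x|_p}=1$.

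\textbf{Finite quotients.} I first solve on $\mathbb{Q}_p^\times/p^{m\mathbb{Z}}/(1+p^d\mathbb{Z}_p)$. There $D$ is a graph Laplacian $D_d$ on a finite weighted graph, and $\delta_y$ is replaced by the normalized indicator of the ball containing $y$. Existence follows from the variational method of Huang--Lin--Yau. I minimize
\begin{align*}
J(w)=\tfrac12\langle -D_dw,w\rangle+\rho\int \frac{w}{V}-\rho\log\int e^{\rho G_d+w},
\end{align*}
which is coercive on a finite graph for every $\rho>0$ by finite dimensionality plus the Poincar\'e inequality with constant $\theta_{p,m}$. Symmetrization over the isotropy group of $y$ (multiplication by units fixing $y$ and preserving $d(\cdot,y)$) lets me restrict to radially symmetric solutions, i.e.\ functions of $d(x,y)$ and the valuation level. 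A discrete maximum principle then shows these are monotone in $d(x,y)$.

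\textbf{Limit and uniform bound.} The universal upper bound on finite quotients, stated before the theorem, gives $\max u_d\le M$ independently of $d$. With the Green's function representation, this yields equicontinuity of $w_d$ away from $y$. Along the nested quotients, a diagonal argument extracts a limit $u$ that solves \eqref{eq:mean_field_equation}. Radial symmetry and monotonicity pass to the limit. Strict monotonicity then follows from the equation: if $u$ were constant on two adjacent spheres, $Du$ there would have a definite sign contradicting $\rho e^u>0$.

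\textbf{Uniqueness, the main obstacle.} Given two solutions $u_1,u_2$, set $\phi=u_1-u_2$. Then $D\phi+\rho\,\xi\,\phi=0$ with $\xi=\frac{e^{u_1}-e^{u_2}}{u_1-u_2}$. If both solutions are known to be radial, monotone and bounded by $M$, then $\xi\le e^{M}$. Testing against $\phi-\bar\phi$ would give
\begin{align*}
\theta_{p,m}\|\phi-\bar\phi\|^2\le\langle -D\phi,\phi\rangle=\rho\int\xi\phi^2\le \rho e^{M}\|\phi\|^2 .
\end{align*}
With $\rho e^{M}<\theta_{p,m}$, together with the mean-zero normalization from $\int e^{u_1}=\int e^{u_2}$, this should force $\phi=0$. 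The hard part is that the inequality as written controls $\phi-\bar\phi$ by $\phi$, not the other way. I would first try decomposing $\phi=\bar\phi+\psi$ and showing that $\bar\phi$ is controlled by $\psi$ via the constraint $\int\xi\phi=0$. The constraint holds because the integrals of $e^{u_i}$ agree.

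The second difficulty is showing that an arbitrary solution is radial and therefore bounded by $M$. I would handle this on the finite quotient first by the same linearization: for a solution $u$ and a symmetry $g$, apply the argument to $u\circ g-u$. On the finite quotient every solution is bounded by the universal bound. I would then pass that uniqueness to the limit.
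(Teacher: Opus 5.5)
\textbf{The linear step is fine; the gaps lie elsewhere.} The step you single out as the main obstacle works along the line you suggest. Since $\int\xi\phi=\int(e^{u_1}-e^{u_2})=0$, writing $\phi=\bar\phi+\psi$ gives $\int\xi\phi^2=\int\xi\psi^2-\bar\phi^2\int\xi$. Hence $(\theta_{p,m}-\rho e^{M})\|\psi\|^2+\rho\bar\phi^2\int\xi\le0$, and so $\phi=0$. This is the same content as Lemma~\ref{lem:non-degenerate_operator}, where $0<\rho e^{u}<\theta_{p,m}$ traps $0$ strictly between the first two eigenvalues of $-D-\rho e^{u}$.

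\textbf{The bound $\xi\le e^{M}$ for an arbitrary solution is circular.} The constant $M$ bounds only radial monotone solutions: it comes from the maximum of such a solution being attained on at least $(p-2)p^{d-1}$ points ($2^{d-2}$ if $p=2$). An arbitrary solution on the finite quotient is only known to satisfy $u\le d\ln p$ (Lemma~\ref{lem:rough_upperbound}). So ``on the finite quotient every solution is bounded by the universal bound'' is exactly what has to be proved, and linearizing $u\circ g-u$ presupposes it. The paper obtains radial symmetry of arbitrary finite-quotient solutions by a different mechanism. Proceeding from the finest digit upward, the values of $u$ on the sub-balls of a given ball that avoid $y$ all satisfy the same scalar equation $\rho p^{d}e^{t}-k_st=C$. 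As long as $t\mapsto\rho p^{d}e^{t}-k_st$ is strictly monotone on the range of $u$, these values must coincide. With the bound $u\le d\ln p$, which holds for every solution, this gives radial symmetry for $\rho\le p^{-d}-p^{-d-m}$ (Theorem~\ref{thm:radially_symmetric_small_rho}).

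\textbf{Passing finite-quotient uniqueness to the limit fails.} A second solution on the Tate curve need not be a limit of finite-quotient solutions, because averaging over cosets of $1+p^{d}\mathbb{Z}_p$ does not commute with $u\mapsto e^{u}$. The paper says this explicitly and therefore argues on the Tate curve itself. It combines uniqueness for small $\rho$ by a compactness argument (Lemma~\ref{lem:uniqueness_small_rho}), nondegeneracy of the linearization (Lemma~\ref{lem:non-degenerate_operator}), and continuation in $\rho$. The bound $\rho e^{u}<\theta_{p,m}$ there is imported from Theorem~\ref{thm:refined_radial_symmetric_and_monotone}. So you are right that an a priori bound for an arbitrary solution is the delicate point, but your way of obtaining it does not work.

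\textbf{Monotonicity does not follow from a maximum principle.} Table~\ref{tab:nmon_and_nrad_solution} exhibits a radially symmetric but non-monotone solution ($p=3$, $m=1$, $d=2$, $\rho=5$), so smallness of $\rho$ must enter. Since $M$ is defined through radial monotone solutions, the uniform bound $\max u_d\le M$ that your limiting step relies on is available only after monotonicity is proved. The paper proves it for $\rho\le p^{-d}-p^{-d-m}$ by subtracting the equations at adjacent levels and using strict monotonicity of $f_s(t)=\rho e^{t}-p^{-1}(p^{s+2}+p^{s+1}-p^{-l}-p^{l+1-m})t$ (Theorem~\ref{thm:monoton_small_rho}). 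It then extends this to $(0,\rho^*)$ by an open--closed argument in $\rho$ (Theorem~\ref{thm:refined_radial_symmetric_and_monotone}). Openness comes from the implicit function theorem on $V_{rad}$, which needs $\rho e^{u}<\theta_{p,m}$; closedness, including strictness, comes from the same level comparison.

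Your strictness argument should also be this comparison. $Du$ at a single point receives contributions of both signs from closer and farther spheres. By contrast, the difference of the equations at two adjacent spheres carrying the same value does have a definite sign.

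\textbf{The proposed isotropy group is trivial.} Multiplication by a unit $\alpha$ fixes $y$ only when $\alpha=1$. For symmetric criticality, use bijections that preserve each stratum and the distance within it and fix $y$. Examples are the paper's shifts $\sigma_k$ on strata not containing $y$, and tree automorphisms of the stratum of $y$ that fix $y$.
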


\begin{remark}
    A rough estimate shows that
    \begin{align*}
        \theta_{p,m}V>\rho^*>\begin{cases}
        \frac{1}{4}\theta_{2,m}, &p=2;\\
    (1-2p^{-1})\theta_{p,m},\ &p>2.
    \end{cases}
    \end{align*}
    In the two extremal cases, $\rho^*$ attains its upper bound when $u$ is constant, and attains its lower bound when $u$ tends to $-\infty$ except at the point farthest from the singularity within the same stratum. Since the mean field equations considered in \cite{MR2352519} and \cite{MR4366002} are of the form
    \begin{align}
    \label{eq:their_mean_field_equation}
        \Delta u+\rho\left(\frac{e^u}{\int_{T}e^udx}-\frac{1}{V}\right)=0.
    \end{align}
    By substituting the Green's function in \eqref{eq:Archimedean_mean_field_equation}, we obtain
    \begin{align}
    \label{eq:weighted_mean_field_equation}
        \Delta v+\rho\left(e^{\rho G}e^{v}-\frac{1}{V}\right)=0,
    \end{align}
    where $v=u-\rho G$. Therefore, these two types of mean-field equations cannot be directly transformed by substituting the Green's function, because the resulting equation carries an additional weight $h(x)=e^{\rho G(x)}$. Under the setting of \eqref{eq:their_mean_field_equation}, the unique solution $u$ is constant, which coincides with the upper bound of $\rho^*$. However, this situation cannot occur here owing to the strict monotonicity of $u$.
\end{remark}

\begin{remark}
Recently, Huang and Jepsen defined such an operator on the Tate curve:
    \begin{align*}
        \bar{D}u(x)=c_p\int_{\mathbb{Q}_p^{\times}/p^{m\mathbb{Z}}}H(z,x)(u(x)-u(z))d\mu^{\times}(z),
    \end{align*}
    where $c_p=\frac{p(p-1)}{p+1}$, $d\mu^{\times}(z)=\frac{dz}{|z|_p}$ and
    \begin{align*}
        H(z,x)=\frac{|x|_p|z|_p}{|z-x|_p^2}+\frac{1}{p^{m}-1}\left(\frac{|x|_p}{|z|_p}+\frac{|z|_p}{|x|_p}\right),
    \end{align*}
    which ensures that $H(\alpha z,\alpha x)=H(z,x)$ for all $\alpha,z,x\in\mathbb{Q}_p^{\times}/p^{m\mathbb{Z}}$ (Our integral kernel is homogeneous on $\mathbb{Q}_p$). They studied the spectrum of $\bar{D}$ and showed that the Green's function is exactly the height function \cite{huang2026onelooppadicstringtheory}. Our results also hold for the operator $\bar{D}$ since it has a similar symmetric structure to $D$, one only needs to replace $\theta_{p,m}$ with the smallest nonzero eigenvalue of $\bar{D}$, which is
    \begin{align*}
        \bar{\theta}_{p,m}=\begin{cases}
            p-1,&m=1;\\
            \frac{p(p-1)(2-2\cos{\frac{2\pi}{m}})}{p^2-2p\cos{\frac{2\pi}{m}}+1},&m>1.
        \end{cases}
    \end{align*}
\end{remark}

Our goal is to connect the Green's function with the mean field equation to explore the critical point properties of the Green's function in future work. Since the results in Archimedean case were obtained indirectly through the mean-field equation, our results will provide a first step toward the non-Archimedean counterpart of this problem.

\subsection{Outline of the paper}
\hspace{\parindent}In Section 2, we study the spectrum of $D$ for the case $m=1$ by using character theory and construct the Green's function and heat kernel as a finite sum, and we extend the result to the general $m$ case.

In Section 3, we study the mean field equation on finite quotients of the Tate curve. We derive a uniform positive upper bound $M$ (depending only on $p$ and $m$) for the solution and demonstrate its central symmetry, cyclic symmetry, and monotonicity. Furthermore, we prove that the solution is radially symmetric and dependents only on $|x|_p$, $|y|_p$ and $|x-y|_p$ when $\rho\leq (1-p^{-m})e^{-M}$, which is similar to the property of the Green's function. Lastly we establish uniqueness for $\rho<\rho^*$.

In Section 4, we show that the solutions on the finite quotient converge to a solution on the Tate curve while preserving their structural properties, thereby establishing existence, and we prove that under the same condition the solution is unique.

In Section 5, we promote the results of the mean field equation to the case of elliptic curves with good reduction.

\section{Spectrum of D and the Green's function}
This section focuses on the spectrum of $D$ and its eigenfunctions. First, we consider the case $m=1$, where the fundamental domain is $\mathbb{Z}_p^*$, which is a compact multiplicative group. We analyse $D$ via characters of $\mathbb{Z}_p^*$ and construct the Green's function with a finite sum. Then we generalize it to the general $m$ case through zero extension.

\subsection{Multiplicative Characters of \texorpdfstring{$\mathbb{Z}_p^*$}{Zp*}}
Since $\mathbb{Z}_p^*\cong(\mathbb{Z}/p\mathbb{Z})^{\times}\times(1+p\mathbb{Z}_p)$, where the first factor is cyclic and the second is torsion free, it suffices to determine the characters of both components.

Let $g$ be a primitive root of $(\mathbb{Z}/p\mathbb{Z})^{\times}$. Because the dual group of $(\mathbb{Z}/p\mathbb{Z})^{\times}$ is isomorphic to itself, for $w,h\in\{0,1,\dots,p-2\}$, the characters for $(\mathbb{Z}/p\mathbb{Z})^{\times}$ are given by
\begin{align*}
    \chi_h(g^w)=e^{\frac{2\pi ih}{p-1}w}.
\end{align*}

As the dual group of $\mathbb{Z}_p$ is isomorphic to $\mathbb{Q}_p/\mathbb{Z}_p$, the characters on $\mathbb{Z}_p$ are $\chi_t(x)=e^{2\pi i\{tx\}_p}$, where $x\in\mathbb{Z}_p,t\in\mathbb{Q}_p/\mathbb{Z}_p$. And we notice that there is a continuous isomorphism
\begin{align*}
    \mathbb{Z}_p&\cong1+p\mathbb{Z}_p\\
    x&\mapsto e^{px}.
\end{align*}
So the characters of $1+p\mathbb{Z}_p$ can be pulled back from the isomorphism as
\begin{align*}
    \chi_t(x)=e^{2\pi i\left\{t\frac{\ln x}{p}\right\}_p},
\end{align*}
where $x\in1+p\mathbb{Z}_p,t\in\mathbb{Q}_p/\mathbb{Z}_p$ and $\{\cdot\}_p$ denotes the $p$-adic fractional part. Letting
\begin{align*}
    t=c_{-n}p^{-n}+c_{-n+1}p^{-n+1}+\dots+c_{-1}p^{-1}=\frac{c_{-n}+c_{-n+1}p+\dots+c_{-1}p^{n-1}}{p^n}
\end{align*}
and taking
\begin{align*}
    c=c_{-n}+c_{-n+1}p+\dots+c_{-1}p^{n-1},
\end{align*}
we rewrite the character $\chi_t(x)$ as
\begin{align*}
    \chi_{c,n+1}(x)=e^{2\pi i\left\{\frac{c\ln x}{p^{n+1}}\right\}_p}.
\end{align*}

Thus $\forall x=a_0+a_1p+\dots\in \mathbb{Z}_p^*$ with $a_0\neq 0$, the characters of $\mathbb{Z}_p^*$ are trivial character $1$ and
\begin{align}
\label{eq:character_m_1}
    \chi_{h,c,n}(x)=\chi_h(a_0)\chi_{c,n}\left(\frac{x}{a_0}\right)=e^{\frac{2\pi ih}{p-1}\text{ord}(a_0)}e^{2\pi i\left\{\frac{c\ln\left(\frac{x}{a_0}\right)}{p^{n}}\right\}_p},
\end{align}
where $h\in\{0,1,\dots,p-2\},n\in\mathbb{N_{+}}$. From \eqref{eq:character_m_1} one can easily check that the conductor of the character $\chi_{h,c,n}$ is $n$.

\subsection{Eigenvalues and eigenfunctions for \texorpdfstring{$D$}{D} when \texorpdfstring{$m=1$}{m=1}}

\begin{lemma}
\label{lem:orthogonal_characters_m=1}
    The characters $\{\chi_{h,c,n}\}_{h,c,n}$ and the trivial character $1$ are orthogonal.
\end{lemma}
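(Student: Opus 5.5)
The plan is to derive the lemma from the standard orthogonality of characters of a compact abelian group. We equip $\mathbb{Z}_p^*$ with the restriction of the measure $\frac{dx}{|x|_p}$, which on $\mathbb{Z}_p^*$ coincides with $dx$. This measure is a Haar measure for multiplication, since $|ax|_p=|x|_p$ and $d(ax)=|a|_p\,dx$.

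We use the inner product $\langle f,g\rangle=\int_{\mathbb{Z}_p^*}f\bar g\,dx$. For two characters $\chi\neq\chi'$, the function $\psi=\chi\bar{\chi'}$ is a nontrivial continuous character. Choose $a$ with $\psi(a)\neq1$. Translation invariance gives $\int\psi(x)dx=\int\psi(ax)dx=\psi(a)\int\psi(x)dx$, so the integral vanishes. The same computation with $\chi'=1$ handles orthogonality to the trivial character. Each $\chi_{h,c,n}$ has modulus $1$, so its norm squared equals the volume $1-p^{-1}$ and is nonzero.

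The real content is checking that the functions $\chi_{h,c,n}$ in \eqref{eq:character_m_1} really are characters, and that distinct labels give distinct characters.

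\textbf{Multiplicativity.} We use the decomposition $\mathbb{Z}_p^*\cong(\mathbb{Z}/p\mathbb{Z})^\times\times(1+p\mathbb{Z}_p)$, $x\mapsto(a_0,x/\omega(a_0))$. Here the Teichmüller lift $\omega(a_0)$ should be used in place of the digit $a_0$ so that the projection is a homomorphism. I would note this and interpret $x/a_0$ accordingly.

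\textbf{The first factor.} The map $x\mapsto e^{\frac{2\pi ih}{p-1}\mathrm{ord}(a_0)}$ is a character of $(\mathbb{Z}/p\mathbb{Z})^\times$ because $\mathrm{ord}$, the discrete log to base $g$, is additive mod $p-1$.

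\textbf{The second factor.} For $p$ odd, $\ln:1+p\mathbb{Z}_p\to p\mathbb{Z}_p$ is a continuous isomorphism. Hence $u\mapsto e^{2\pi i\{c\ln u/p^n\}_p}$ is a character, using that $\{\cdot\}_p$ is additive modulo $\mathbb{Z}$.

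\textbf{Distinctness.} Distinct pairs $(h,t)$ with $t=c/p^{n}\in\mathbb{Q}_p/\mathbb{Z}_p$ give distinct characters, because they correspond to distinct elements of the dual group $\widehat{(\mathbb{Z}/p)^\times}\times\widehat{\mathbb{Z}_p}$.

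The main obstacle I expect is the case $p=2$ (and the Teichmüller issue), where $\ln$ is not injective on $1+2\mathbb{Z}_2$. There I would either restrict to $1+4\mathbb{Z}_2$ with the $\pm1$ factor absorbed, or simply remark that orthogonality only needs each $\chi_{h,c,n}$ to be a nontrivial continuous homomorphism for distinct indices. Given those checks, the Haar-invariance argument above finishes the proof.
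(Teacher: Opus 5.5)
Your argument is correct for odd $p$, but it takes a genuinely different route from the paper's. The paper never uses that $\chi_{h,c,n}$ is a homomorphism. It splits $\mathbb{Z}_p^*$ into the classes $a_0+p\mathbb{Z}_p$ and pulls out $e^{\frac{2\pi i(h-h')}{p-1}\mathrm{ord}(a_0)}$. It then moves each class to $1+p\mathbb{Z}_p$ by $x\mapsto x/a_0$, and on to $\mathbb{Z}_p$ by $x=e^{py}$. It finishes with $\int_{\mathbb{Z}_p}e^{2\pi i\{ty\}_p}dy=\boldsymbol{1}_{\mathbb{Z}_p}(t)$ and the root-of-unity sum over $a_0$.

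Your Haar-invariance argument is shorter and more conceptual. However, it puts all the work into checking that the $\chi_{h,c,n}$ are genuine, distinct, nontrivial homomorphisms. That is where you rightly observe that the digit-based formula \eqref{eq:character_m_1} is not multiplicative for $n\ge 2$: for $p=5$ one gets $\chi(9)/\chi(3)^2=\chi_{c,n}(9/4)\neq 1$. So what you prove is orthogonality of the Teichmüller-corrected family, whereas the paper's coset computation applies verbatim to the literal functions. You can recover the literal statement by your method too. On each class, the digit version equals the Teichmüller version times the unimodular constant $\chi_{c,n}(\omega(a_0)/a_0)$. Expanding this $a_0$-dependent twist in characters of $(\mathbb{Z}/p\mathbb{Z})^\times$ and applying your argument termwise gives the result: if the $1+p\mathbb{Z}_p$-components differ every term vanishes, and if they agree the twists cancel.

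At $p=2$, however, your second fallback fails. Since $\ln(-1)=0$ and $\ln(1+4\mathbb{Z}_2)=4\mathbb{Z}_2$, we have $\ln(1+2\mathbb{Z}_2)=4\mathbb{Z}_2$. Hence the formula gives $\chi_{0,1,2}(x)=e^{2\pi i\{\ln x/4\}_2}\equiv 1$, so $\langle\chi_{0,1,2},1\rangle=\tfrac12\neq 0$. Distinct labels also collapse, e.g.\ $\chi_{0,1,3}=\chi_{0,3,3}$. So your Distinctness step is false at $p=2$, and so is the lemma as literally written. Only your first remedy gives a correct statement: redefine the family via $\mathbb{Z}_2^*=\{\pm1\}\times(1+4\mathbb{Z}_2)$, using $x\mapsto(-1)^{(x-1)/2}$ together with $e^{2\pi i\{t\ln x/4\}_2}$. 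This is not a weakness relative to the paper. Its substitution $x=e^{py}$, $y\in\mathbb{Z}_p$, does not parametrize $1+2\mathbb{Z}_2$ either, so its computation has the same defect at $p=2$.
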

\begin{proof}
First, $\langle1,1\rangle=1-p^{-1}$ and $\langle\chi_{h,c,n},1\rangle=0$ as $\chi_{h,c,n}$ is nontrivial on $\mathbb{Z}_p^*$. Next, 
    \begin{align*}
        \langle\chi_{h,c,n},\chi_{h',c',n'}\rangle&=\int_{\mathbb{Z}_p^*}\chi_{h,c,n}(x)\overline{\chi_{h',c',n'}(x)}dx\\
        &=\int_{\mathbb{Z}_p^*}e^{\frac{2\pi i(h-h')}{p-1}\text{ord}(a_0)}e^{2\pi i\left\{\left(\frac{c}{p^{n}}-\frac{c'}{p^{n'}}\right)\ln\left(\frac{x}{a_0}\right)\right\}_p}dx\\
        &=\sum_{a_0=1}^{p-1}e^{\frac{2\pi i(h-h')}{p-1}\text{ord}(a_0)}\int_{1+p\mathbb{Z}_p}e^{2\pi i\left\{\left(\frac{c}{p^{n}}-\frac{c'}{p^{n'}}\right)\ln\left(\frac{x}{a_0}\right)\right\}_p}dx\\
        &=p^{-1}\sum_{a_0=1}^{p-1}e^{\frac{2\pi i(h-h')}{p-1}\text{ord}(a_0)}\int_{\mathbb{Z}_p}e^{2\pi i\left\{\left(\frac{c}{p^{n}}-\frac{c'}{p^{n'}}\right)y\right\}_p}dy\\
        &=p^{-1}\sum_{a_0=1}^{p-1}e^{\frac{2\pi i(h-h')}{p-1}\text{ord}(a_0)}\boldsymbol{1}_{\mathbb{Z}_p}\left(\frac{c}{p^{n}}-\frac{c'}{p^{n'}}\right)\\
        &=(1-p^{-1})\delta_{h,c,n}^{h',c',n'},
    \end{align*}
thus establishing orthogonality.
\end{proof}

\begin{theorem}
\label{thm:spectrum_D_m=1}
    The eigenvalues of $D$ are $\lambda_0=0$ and $\lambda_n=-p^{n-1}-p^{n-2}+\frac{2}{p}$ for $n\geq 1$, with the corresponding eigenfunctions $\phi_0=1$ and $\phi_{h,c,n}=\chi_{h,c,n}$. The multiplicities are $m_0=1,m_{1}=p-2$ and $m_{n}=(p-1)^2p^{n-2}$ for $n\geq2$.
\end{theorem}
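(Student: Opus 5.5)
The plan is to reduce the case $m=1$ to a convolution operator on the compact abelian group $\mathbb{Z}_p^*$. Then each multiplicative character is an eigenfunction, and its eigenvalue is a single integral that depends only on the conductor. Take $q=p$ and the fundamental domain $\mathbb{Z}_p^*$. For $x,z\in\mathbb{Z}_p^*$ we have $|x|_p=1$ and $|z-x|_p=|z/x-1|_p$, and $dz$ restricted to $\mathbb{Z}_p^*$ is invariant under $z\mapsto xw$. Hence
\begin{align*}
Du(x)=\int_{\mathbb{Z}_p^*}\frac{u(xw)-u(x)}{|w-1|_p^2}\,dw .
\end{align*}
For a character $\chi$ of $\mathbb{Z}_p^*$ this gives $D\chi=\lambda(\chi)\chi$ with
\begin{align*}
\lambda(\chi)=\int_{\mathbb{Z}_p^*}\frac{\chi(w)-1}{|w-1|_p^2}\,dw .
\end{align*}
The trivial character gives $\lambda_0=0$ with $\phi_0=1$.

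To evaluate $\lambda(\chi)$ for $\chi=\chi_{h,c,n}$ of conductor $n$, I split $\mathbb{Z}_p^*$ into the shells $\{w:|w-1|_p=p^{-j}\}$. The shell with $j=0$ is $\mathbb{Z}_p^*\setminus(1+p\mathbb{Z}_p)$, and for $j\ge1$ the shell is $1+p^j\mathbb{Z}_p^*$. Put $I_j=\int_{1+p^j\mathbb{Z}_p}\chi$. By orthogonality of characters (as in Lemma \ref{lem:orthogonal_characters_m=1}), $I_j=p^{-j}$ if $j\ge n$ and $I_j=0$ if $j<n$; also $I_0=\int_{\mathbb{Z}_p^*}\chi=0$. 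On the shells with $j\ge n$ we have $\chi\equiv1$, so the integrand vanishes and no divergence occurs: only $0\le j\le n-1$ contribute. The shell contributions are
\begin{align*}
-I_1-(1-2p^{-1})\quad (j=0),\qquad p^{2j}\bigl(I_j-I_{j+1}-p^{-j}(1-p^{-1})\bigr)\quad (j\ge1).
\end{align*}
For $n=1$ only $j=0$ survives, and $I_1=p^{-1}$, so $\lambda_1=-1+p^{-1}$, which agrees with the formula $-p^{0}-p^{-1}+2p^{-1}$. For $n\ge2$:
\begin{itemize}
\item $j=0$ gives $-(1-2p^{-1})$;
\item $1\le j\le n-2$ give $-(1-p^{-1})p^j$ in total $-(p^{n-2}-1)$;
\item $j=n-1$ gives $-p^{n-2}-p^{n-1}(1-p^{-1})=-p^{n-1}$.
\end{itemize}
Summing yields $\lambda_n=-p^{n-1}-p^{n-2}+2p^{-1}$.

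For completeness, the characters of the compact abelian group $\mathbb{Z}_p^*$ form an orthogonal basis of $L^2(\mathbb{Z}_p^*)$ by Pontryagin duality (Peter--Weyl). Orthogonality is Lemma \ref{lem:orthogonal_characters_m=1}. So $D$ is diagonalized by $\{1\}\cup\{\chi_{h,c,n}\}$ and has no other eigenvalues. The values $\lambda_n$ are strictly decreasing in $n$ and nonzero for $n\ge1$. Hence each eigenspace is exactly the span of the characters of conductor $n$, and $m_0=1$.

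The multiplicity count is then a count of characters by conductor. Characters of conductor $\le n$ are the characters of $(\mathbb{Z}/p^n\mathbb{Z})^\times$, of which there are $(p-1)p^{n-1}$. This gives $m_1=(p-1)-1=p-2$, and for $n\ge2$
\begin{align*}
m_n=(p-1)p^{n-1}-(p-1)p^{n-2}=(p-1)^2p^{n-2}.
\end{align*}
The main obstacle is the shell computation: the term $-1$ alone is not integrable against $|w-1|_p^{-2}$. One must use that $\chi-1$ vanishes identically on $1+p^n\mathbb{Z}_p$ before splitting the integral, and track the telescoping of the $I_j$ carefully. A second point to check is that the parametrization $\chi_{h,c,n}$ (in particular for $n=1$, where only $h\neq0$ occurs) really has conductor $n$ as claimed after \eqref{eq:character_m_1}.
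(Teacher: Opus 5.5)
Your proposal is correct and follows essentially the paper's route. Characters are the eigenfunctions, and the eigenvalue is computed by grouping $z$ according to $|z-x|_p=p^{-j}$, with orthogonality of $\chi$ on the subgroups $1+p^j\mathbb{Z}_p$ ensuring only the shells $j\le n-1$ contribute. Completeness comes from the Fourier expansion in characters, and multiplicities are counts of characters of exact conductor $n$. The convolution form $Du(x)=\int_{\mathbb{Z}_p^*}\frac{u(xw)-u(x)}{|w-1|_p^2}\,dw$, your separate treatment of $n=1$ (which the paper's displayed shell sum glosses over), and your check that the $\lambda_n$ are strictly decreasing are minor tightenings of the same argument.
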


\begin{proof}
First, because $\ker D=\mathbb{C}$, the constant function $1$ is an eigenfunction with eigenvalue $0$. Since the conductor of $\chi_{h,c,n}$ is $n$, it is trivial on $1+p^n\mathbb{Z}_p$ but nontrivial on $1+p^{n-1}\mathbb{Z}_p$. By the first orthogonality relation of the character, $\forall s\in\{1,2,\dots,n-1\}$, we have
\begin{align*}
    \sum_{x\in(1+p^{s}\mathbb{Z}_p)/(1+p^{n}\mathbb{Z}_p)}\chi_{h,c,n}(x)=0,
\end{align*}
thus it suffices to consider $z=a_0+a_1p+\dots+a_{n-1}p^{n-1}$ and $x=b_0+b_1p+\dots+b_{n-1}p^{n-1}$. Then we have
    \begin{align*}
        D\chi_{h,c,n}(x)&=\int_{\mathbb{Z}_p^*}\frac{\chi_{h,c,n}(z)-\chi_{h,c,n}(x)}{|z-x|_p^2}dz\\        &=\mu(1+p^n\mathbb{Z}_p)\int_{\mathbb{Z}_p^*/(1+p^n\mathbb{Z}_p)}\frac{\chi_{h,c,n}(z)-\chi_{h,c,n}(x)}{|z-x|_p^2}d\delta\\
        &=p^{-n}\sum_{j=0}^{n-1}\sum_{\substack{a_i=b_i\\0\leq i\leq j-1\\a_{j}\neq b_{j}\\a_{j+1}\dots a_{n-1}}}p^{2j}(\chi_{h,c,n}(z)-\chi_{h,c,n}(x))\\
        &=-p^{-n}\left((p-2)p^{n-1}+\sum_{d=1}^{n-2}(p-1)p^{n-1-d}p^{2d}+(p-1-(-1))p^{2(n-1)}\right)\chi_{h,c,n}(x)\\
        &=-p^{-n}\left((p-2)p^{n-1}+\sum_{d=1}^{n-2}(p-1)p^{n-1+d}+p^{2n-1}\right)\chi_{h,c,n}(x)\\
        &=-p^{-n}(p^{2n-1}+p^{2n-2}-2p^{n-1})\chi_{h,c,n}(x)\\
        &=-\left(p^{n-1}+p^{n-2}-\frac{2}{p}\right)\chi_{h,c,n}(x).
    \end{align*}
Hence $\chi_{h,c,n}$ is an eigenfunction of $D$ with eigenvalue $-p^{n-1}-p^{n-2}+\frac{2}{p}$ for $n\geq 1$.

Next, for any locally constant eigenfunction $f$ in $\mathbb{Z}_p^*$, we have the Fourier transform
\begin{align*}
    \hat f(\xi)=\int_{\mathbb{Z}_p^*}f(x)\overline{\chi_{\xi}(x)}dx,
\end{align*}
then
\begin{align*}
    f(x)=\sum_{\chi_{\xi}\in\text{Char}(\mathbb{Z}_p^*)}\hat f(\xi)\chi_{\xi}(x),
\end{align*}
so we have
    \begin{align*}
        Df(x)&=\sum_{\chi_{\xi}\in\text{Char}(\mathbb{Z}_p^*)}\hat f(\xi)(D\chi_{\xi}(x))\\
        &=\sum_{\chi_{\xi}\in\text{Char}(\mathbb{Z}_p^*)}\hat f(\xi)\lambda_{\xi}\chi_{\xi}(x)\\
        &=\lambda_f\sum_{\chi_{\xi}\in\text{Char}(\mathbb{Z}_p^*)}\hat f(\xi)\chi_{\xi}(x),
    \end{align*}
thus
\begin{align*}
    \sum_{\chi_{\xi}\in\text{Char}(\mathbb{Z}_p^*)}(\lambda_{\xi}-\lambda_f)\hat f(\xi)\chi_{\xi}(x)=0.
\end{align*}
By orthogonality of eigenfunctions, we have $\lambda_{\xi}=\lambda_f$ if $\lambda_{\xi}$ belongs to the expansion of $f$. That means $\text{Char}(\mathbb{Z}_p^*)$ is a complete eigenbasis of all eigenfunctions of $D$.

Lastly, the multiplicity of eigenvalues comes from counting the number of $h$ and $c$.
\end{proof}

\subsection{Construction of the Green's function when \texorpdfstring{$m=1$}{m=1}}
Using the spectrum of $D$, we construct the Green's function as
\begin{align}
    G(x,y)=\frac{1}{1-p^{-1}}\sum_{\phi_{h,c,n}\in\text{Char}(\mathbb{Z}_p^*)\backslash\{1\}}\frac{\phi_{h,c,n}(x)\overline{\phi_{h,c,n}(y)}}{\lambda_n}+C.
\end{align}
Letting $H_n$ denote the set of characters with conductor at most $n$ and defining $C_n:=H_n\backslash H_{n-1}$. By the second orthogonality relation of the characters, we have
\begin{align}
\label{eq:second_orthogonality_relation}
    \sum_{\phi\in H_{n}}\phi(x)\overline{\phi(y)}=\boldsymbol{1}_{1+p^n\mathbb{Z}_p}(xy^{-1})|H_n|,
\end{align}
where $|H_n|=(p-1)p^{n-1}=\varphi(p^n)$. If $v_p(x-y)=M$ and $n\geq M+1$, the sum \eqref{eq:second_orthogonality_relation} vanishes.\footnote{The idea of the finite sum formula for the case $m=1$ originated from Ekam's independent study.} Therefore,
\begin{equation}
    \begin{aligned}
        G(x,y)&=\frac{1}{1-p^{-1}}\sum_{n=1}^{M+1}\sum_{\phi\in C_n}\frac{\phi(x)\overline{\phi(y)}}{\lambda_n}+C\\
        &=\frac{1}{1-p^{-1}}\left(\sum_{n=1}^{M}\frac{\varphi(p^n)-\varphi(p^{n-1})}{\lambda_n}-\frac{\varphi(p^M)}{\lambda_{M+1}}\right)+C\\
        &=-\sum_{n=1}^{M}\frac{(p-1)p^n}{p^{n}+p^{n-1}-2}+\frac{p^{M+1}}{p^{M+1}+p^{M}-2}+C,
    \end{aligned}
\end{equation}
which is expressed as a finite sum.

\subsection{General \texorpdfstring{$m$}{m} case}
For general $m$, the fundamental domain is $\bigcup_{k=0}^{m-1}p^k\mathbb{Z}_p^*$. First, we consider $\phi_{h,c,n}(p^{-l}x)\boldsymbol{1}_{p^{l}\mathbb{Z}_p^*}(x)$ for $n\geq1$. We have
\begin{equation}
    \begin{aligned}
    \label{eq:compute_eigfunction_general_m}
        &D(\phi_{h,c,n}(p^{-l}x)\boldsymbol{1}_{p^{l}\mathbb{Z}_p^*}(x))(x)\\
        =&|x|_p\int_{\bigcup_{k=0}^{m-1}p^k\mathbb{Z}_p^*}\frac{\phi_{h,c,n}(p^{-l}z)\boldsymbol{1}_{p^{l}\mathbb{Z}_p^*}(z)-\phi_{h,c,n}(p^{-l}x)\boldsymbol{1}_{p^{l}\mathbb{Z}_p^*}(x)}{|z-x|_p^2}dz\\
        =&|x|_p\int_{p^l\mathbb{Z}_p^*}\frac{\phi_{h,c,n}(p^{-l}z)-\phi_{h,c,n}(p^{-l}x)\boldsymbol{1}_{p^{l}\mathbb{Z}_p^*}(x)}{|z-x|_p^2}dz
        -\left(|x|_p\int_{\bigcup_{k\neq l}p^k\mathbb{Z}_p^*}\frac{1}{|z-x|_p^2}dz\right)\phi_{h,c,n}(p^{-l}x)\boldsymbol{1}_{p^{l}\mathbb{Z}_p^*}(x),
    \end{aligned}
\end{equation}
where the first term is
\begin{equation}
    \begin{aligned}
    \label{eq:compute_eigfunction_general_m_first_term}
        &|x|_p\int_{p^l\mathbb{Z}_p^*}\frac{\phi_{h,c,n}(p^{-l}z)-\phi_{h,c,n}(p^{-l}x)\boldsymbol{1}_{p^{l}\mathbb{Z}_p^*}(x)}{|z-x|_p^2}dz\\
        =&|p^{-l}x|_p\int_{p^l\mathbb{Z}_p^*}\frac{\phi_{h,c,n}(p^{-l}z)-\phi_{h,c,n}(p^{-l}x)\boldsymbol{1}_{p^{l}\mathbb{Z}_p^*}(x)}{|p^{-l}z-p^{-l}x|_p^2}dp^{-l}z\\
        =&|p^{-l}x|_p\int_{\mathbb{Z}_p^*}\frac{\phi_{h,c,n}(z)-\phi_{h,c,n}(p^{-l}x)\boldsymbol{1}_{p^{l}\mathbb{Z}_p^*}(x)}{|z-p^{-l}x|_p^2}dz.
    \end{aligned}
\end{equation}
When $v_p(x)=l$, equation \eqref{eq:compute_eigfunction_general_m_first_term} reduces to
    \begin{align*}
        \int_{\mathbb{Z}_p^*}\frac{\phi_{h,c,n}(z)-\phi_{h,c,n}(p^{-l}x)}{|z-p^{-l}x|_p^2}dz=(D_0\phi_{h,c,n})(p^{-l}x)=\lambda_n\phi_{h,c,n}(p^{-l}x),
    \end{align*}
while the second term is
\begin{align*}
    -\sum_{\substack{k=0\\k\neq l}}^{m-1}\frac{p^{-l}}{\max\{p^{-k},p^{-l}\}^2}p^{-k}(1-p^{-1})\phi_{h,c,n}(p^{-l}x).
\end{align*}
Then the coefficient of $\phi_{h,c,n}(p^{-l}x)$ simplifies to
    \begin{align*}
        &-\sum_{\substack{k=0\\k\neq l}}^{m-1}\frac{p^{-l}p^{-k}}{\max\{p^{-k},p^{-l}\}^2}(1-p^{-1})\\
        =&-(1-p^{-1})\left(\sum_{k=0}^{l-1}p^{-l}p^{-k}p^{2k}+\sum_{k=l+1}^{m-1}p^{-l}p^{-k}p^{2l}\right)\\
        =&-(1-p^{-1})\left(p^{-l}\sum_{k=0}^{l-1}p^{k}+\sum_{k=l+1}^{m-1}p^{-(k-l)}\right)\\
        =&\frac{p^{-l}+p^{l+1-m}-2}{p},
    \end{align*}
thus \eqref{eq:compute_eigfunction_general_m} becomes
    \begin{align*}
        &-(p^{n-1}+p^{n-2}-\frac{2}{p}-\frac{p^{-l}+p^{l+1-m}-2}{p})\phi_{h,c,n}(p^{-l}x)\\
        =&-(p^{n-1}+p^{n-2}-\frac{p^{-l}+p^{l+1-m}}{p})\phi_{h,c,n}(p^{-l}x).
    \end{align*}
When $v_p(x)\neq l$, \eqref{eq:compute_eigfunction_general_m_first_term} becomes
\begin{align*}
    \frac{|p^{-l}x|_p}{\max\{1,|p^{-l}x|_p\}^2}\int_{\mathbb{Z}_p^*}\phi_{h,c,n}(z)dz=0
\end{align*}
since $\phi_{h,c,n}$ is orthogonal to $\phi_0=1$ by Lemma \ref{lem:orthogonal_characters_m=1}, and the second term vanishes. Denoting $\lambda_{n,l}=-p^{n-1}-p^{n-2}+\frac{p^{-l}+p^{l+1-m}}{p}$, we obtain
\begin{align*}
    D(\phi_{h,c,n}(p^{-l}x)\boldsymbol{1}_{p^{l}\mathbb{Z}_p^*}(x))(x)=\lambda_{n,l}\phi_{h,c,n}(p^{-l}x)\boldsymbol{1}_{p^{l}\mathbb{Z}_p^*}(x),
\end{align*}
which means $\phi_{h,c,n}(p^{-l}x)\boldsymbol{1}_{p^{l}\mathbb{Z}_p^*}(x)$ is the eigenfunction of $D$ with the eigenvalue $\lambda_{n,l}$.

When $n=0$, we consider $\boldsymbol{1}_{p^{l}\mathbb{Z}_p^*}(x)$, similarly, we have
\begin{align*}
    D\boldsymbol{1}_{p^{l}\mathbb{Z}_p^*}(x)=|p^{-l}x|_p\int_{\mathbb{Z}_p^*}\frac{1-\boldsymbol{1}_{p^{l}\mathbb{Z}_p^*}(x)}{|z-p^{-l}x|_p^2}dz+\frac{p^{-l}+p^{l+1-m}-2}{p}\boldsymbol{1}_{p^{l}\mathbb{Z}_p^*}(x),
\end{align*}
so when $v_p(x)=l$, the first part vanishes, and when $v_p(x)\neq l$, the first part becomes
\begin{align*}
    \frac{|p^{-l}x|_p(1-p^{-1})}{\max\{1,|p^{-l}x|_p\}^2}=(1-p^{-1})p^{-|v_p(x)-l|},
\end{align*}
denoting this vector by $\boldsymbol{r}_l=\begin{pmatrix}  r_{0l}\\ r_{1l}\\ \vdots \\ r_{m-1,l}\end{pmatrix}$. We want to use the linear combination of them to construct the eigenvalue and eigenfunction:
\begin{align*}
    D\sum_{l=0}^{m-1}c_l\boldsymbol{1}_{p^{l}\mathbb{Z}_p^*}(x)=\sum_{l=0}^{m-1}c_l\boldsymbol{r}_l=(\boldsymbol{r}_0,\boldsymbol{r}_1,\dots,\boldsymbol{r}_{m-1})\begin{pmatrix}  c_{0}\\ c_{1}\\ \vdots \\ c_{m-1}\end{pmatrix}=\boldsymbol{R}\boldsymbol{c},
\end{align*}
where we require that
\begin{align*}
    D\sum_{l=0}^{m-1}c_l\boldsymbol{1}_{p^{l}\mathbb{Z}_p^*}(x)=\lambda\sum_{l=0}^{m-1}c_l\boldsymbol{1}_{p^{l}\mathbb{Z}_p^*}(x)=\lambda\boldsymbol{c},
\end{align*}
so we only need to solve
\begin{align*}
    \boldsymbol{R}\boldsymbol{c}=\lambda\boldsymbol{c},
\end{align*}
where $\boldsymbol{R}=(1-p^{-1})(\boldsymbol{P}-\text{diag}(\boldsymbol{P}\boldsymbol{1}_m))$ with rank$(\boldsymbol{R})=m-1$ and $(\boldsymbol{P})_{ij}=p^{-|i-j|}$. Let the eigenvalues of $\boldsymbol{R}$ be $\lambda_{0,0}=0,\lambda_{0,1},\dots,\lambda_{0,m-1}$ and $\boldsymbol{Q}^{-1}\boldsymbol{R}\boldsymbol{Q}=\boldsymbol{\Lambda}_0$ for an orthogonal matrix $\boldsymbol{Q}$. Then $\boldsymbol{R}\boldsymbol{Q}=\boldsymbol{\Lambda}_0\boldsymbol{Q}=(\lambda_{0,0}\boldsymbol{q}_0,\dots,\lambda_{0,m-1}\boldsymbol{q}_{m-1})$. So only the constant function $1$ is an eigenfunction of $D$ with eigenvalue $0$.
Denoting
\begin{align*}
    \phi_0^l(x)=\boldsymbol{q}_{l}(x)=\sum_{k=0}^{m-1}q_{k,l}\boldsymbol{1}_{p^{k}\mathbb{Z}_p^*}(x)
\end{align*}
and
\begin{align*}
    \phi_{h,c,n}^l(x)=\phi_{h,c,n}(p^{-l}x)\boldsymbol{1}_{p^{l}\mathbb{Z}_p^*}(x),
\end{align*}
we have the following lemma:

\begin{lemma}
\label{lem:orthogonal_characters_general_m}
    The sets $\{\phi_0^l\}_l,\{\phi_{h,c,n}^l\}_{h,c,n,l}$ form a complete orthogonal eigenbasis for $D$.
\end{lemma}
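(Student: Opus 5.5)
The proof has three parts: (i) each listed function is an eigenfunction, (ii) the functions are pairwise orthogonal, and (iii) they span all locally constant functions on $\bigcup_{k=0}^{m-1}p^k\mathbb{Z}_p^*$, hence a dense subspace of $L^2$.

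For (i), the computation preceding the lemma already shows that $\phi_{h,c,n}^l$ is an eigenfunction with eigenvalue $\lambda_{n,l}$. For $\phi_0^l$ we use the identity $D\sum_l c_l\boldsymbol{1}_{p^l\mathbb{Z}_p^*}=\sum_l(\boldsymbol{R}\boldsymbol{c})_l\boldsymbol{1}_{p^l\mathbb{Z}_p^*}$ with $\boldsymbol{c}=\boldsymbol{q}_l$. Since $\boldsymbol{R}$ is real symmetric ($\boldsymbol{P}$ is symmetric and the subtracted diagonal is symmetric), the orthogonal $\boldsymbol{Q}$ exists, and $\phi_0^l$ has eigenvalue $\lambda_{0,l}$.

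For (ii), we work with the inner product induced by $\frac{dx}{|x|_p}$. Two functions supported on different strata $p^l\mathbb{Z}_p^*$ and $p^{l'}\mathbb{Z}_p^*$ are trivially orthogonal. On a single stratum, the substitution $x\mapsto p^{-l}x$ turns the measure into Haar measure on $\mathbb{Z}_p^*$ (the factor $|x|_p^{-1}$ cancels the scaling exactly). So $\langle\phi^l_{h,c,n},\phi^l_{h',c',n'}\rangle$ reduces to Lemma~\ref{lem:orthogonal_characters_m=1}, giving $(1-p^{-1})\delta$. Next, $\phi_0^{l'}$ is constant on each stratum, so $\langle\phi^l_{h,c,n},\phi_0^{l'}\rangle=q_{l,l'}\langle\chi_{h,c,n},1\rangle_{\mathbb{Z}_p^*}=0$, again by Lemma~\ref{lem:orthogonal_characters_m=1}. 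Finally, $\langle\phi_0^l,\phi_0^{l'}\rangle=(1-p^{-1})\sum_k q_{k,l}q_{k,l'}=(1-p^{-1})\delta_{ll'}$ because the columns of $\boldsymbol{Q}$ are orthonormal. This is where the choice of measure matters: each stratum must carry the same mass $1-p^{-1}$. With the additive measure $dx$ the mass of stratum $k$ would be $p^{-k}(1-p^{-1})$, and the Euclidean orthonormality of $\boldsymbol{Q}$ would not give orthogonality. I would state explicitly that the multiplicative measure is the one in force.

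For (iii), let $f$ be locally constant. Then $f=\sum_l f\boldsymbol{1}_{p^l\mathbb{Z}_p^*}$, and $g_l(x):=f(p^lx)$ is locally constant on $\mathbb{Z}_p^*$. By the completeness part of Theorem~\ref{thm:spectrum_D_m=1}, $g_l$ is a finite combination of the characters, i.e. $g_l=\hat g_l(1)\cdot1+\sum \hat g_l(h,c,n)\chi_{h,c,n}$. Transporting back, $f\boldsymbol{1}_{p^l\mathbb{Z}_p^*}$ is a combination of $\boldsymbol{1}_{p^l\mathbb{Z}_p^*}$ and the $\phi^l_{h,c,n}$. The span of the step functions $\boldsymbol{1}_{p^l\mathbb{Z}_p^*}$, $l=0,\dots,m-1$, equals the span of $\phi_0^0,\dots,\phi_0^{m-1}$ because $\boldsymbol{Q}$ is invertible. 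So every locally constant function lies in the span, and density of locally constant functions in $L^2$ gives completeness.

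Finally, any eigenfunction $f$ of $D$ expands in this orthogonal eigenbasis. Applying $D$ and comparing coefficients, exactly as in the proof of Theorem~\ref{thm:spectrum_D_m=1}, shows that only basis elements with eigenvalue equal to $\lambda_f$ appear. Hence the basis exhausts all eigenfunctions.

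The main obstacle is step (ii): verifying the normalization so that the $\boldsymbol{R}$-eigenvectors give $L^2$-orthogonal functions. The completeness step is routine once this is in place.
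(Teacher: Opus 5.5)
Your proposal is correct and follows essentially the same route as the paper. Orthogonality comes from rescaling each stratum to $\mathbb{Z}_p^*$ and invoking Lemma~\ref{lem:orthogonal_characters_m=1}, together with the orthogonal diagonalization of the symmetric matrix $\boldsymbol{R}$; completeness comes from the stratum-wise decomposition $f=f_c+f_0+\dots+f_{m-1}$ with Fourier expansion pulled back to $\mathbb{Z}_p^*$. Your explicit check that every stratum has mass $1-p^{-1}$ under $\frac{dx}{|x|_p}$ usefully spells out what the paper compresses into ``follows from the symmetry of $\boldsymbol{R}$''.
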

\begin{proof}
The orthogonality of $\{\phi_0^l\}_l$ follows from the symmetry of $\boldsymbol{R}$. For $\phi_{h,c,n}^l$, by Lemma \ref{lem:orthogonal_characters_m=1} we have

    \begin{align*}        \langle\phi_{h,c,n}^{l},\phi_{h',c',n'}^{l'}\rangle&=\delta_{l}^{l'}\langle\phi_{h,c,n}^{l},\phi_{h',c',n'}^{l}\rangle\\
        &=\delta_{l}^{l'}\int_{p^l\mathbb{Z}_p^*}\phi_{h,c,n}(p^{-l}x)\overline{\phi_{h,c,n}(p^{-l}x)}\frac{dx}{|x|_p}\\
        &=\delta_{l}^{l'}\int_{\mathbb{Z}_p^*}\phi_{h,c,n}(x)\overline{\phi_{h,c,n}(x)}dx\\
        &=\delta_{l}^{l'} \langle\phi_{h,c,n},\phi_{h',c',n'}\rangle\\
        &=(1-p^{-1})\delta_{h,c,n,l}^{h',c',n',l'},
    \end{align*}
which is also orthogonal. Finally, $\langle\phi_{h,c,n}^{l},\phi_0^{l'}\rangle=0$ is trivial because $\phi_{h,c,n}^{l}$ is not a constant function on any stratum $p^k\mathbb{Z}_p^*$.

For any eigenfunction $f$, we have the decomposition $f=f_c+f_0+f_1+\dots+f_{m-1}$, where $f_c$ is constant on each stratum and $\text{supp}f_l\subseteq p^l\mathbb{Z}_p^*$ with zero DC component (i.e., the Fourier expansion of $f_l$ does not contain the trivial character). We can define the Fourier transform of $f_l$ by pulling it back to $\mathbb{Z}_p^*$ as $\widehat{f_l}(\xi):=\widehat{f_l(p^{-l}x)}(\xi)$. Then we can expand $f$ by Fourier series, and the characters are exactly the eigenfunctions. Similar to the discussion of $m=1$, this establishes completeness.
\end{proof}

Lastly, we handle the multiplicity of the eigenvalues of the matrix $\boldsymbol{R}$. Which is equivalent to consider $\boldsymbol{L}=\text{diag}(\boldsymbol{P}\boldsymbol{1}_m)-\boldsymbol{P}$.
\begin{lemma}
\label{lem:distinct_eigenvalues_symmetric_tridiagonal_matrix}
    The eigenvalues of an irreducible symmetric tridiagonal matrix are distinct.
\end{lemma}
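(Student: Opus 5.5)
The plan is the classical argument through the eigenvector recurrence. Let $\boldsymbol{T}$ be an $m\times m$ real symmetric tridiagonal matrix with diagonal entries $a_1,\dots,a_m$ and off-diagonal entries $b_1,\dots,b_{m-1}$. Irreducibility means $b_i\neq 0$ for every $i$. Since $\boldsymbol{T}$ is real symmetric, it is diagonalizable, so the algebraic multiplicity of each eigenvalue equals its geometric multiplicity. It therefore suffices to show that every eigenspace is one-dimensional.

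Fix an eigenvalue $\lambda$ and an eigenvector $\boldsymbol{v}=(v_1,\dots,v_m)^T$, and write out $(\boldsymbol{T}-\lambda I)\boldsymbol{v}=0$ row by row:
\begin{align*}
    (a_1-\lambda)v_1+b_1v_2=0,\qquad b_{i-1}v_{i-1}+(a_i-\lambda)v_i+b_iv_{i+1}=0\quad (2\le i\le m-1).
\end{align*}
Because each $b_i\neq0$, the $i$-th equation can be solved for $v_{i+1}$ in terms of $v_i$ and $v_{i-1}$. By induction, $v_1$ determines the whole vector. Moreover $v_1\neq 0$, since otherwise the recurrence forces $\boldsymbol{v}=0$. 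Hence the map $\boldsymbol{v}\mapsto v_1$ is injective on the eigenspace, so the eigenspace has dimension at most one.

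An equivalent check is a rank argument. Delete the first column and the last row of $\boldsymbol{T}-\lambda I$. The resulting $(m-1)\times(m-1)$ submatrix is lower triangular with diagonal $b_1,\dots,b_{m-1}$, so its determinant is nonzero. Thus $\operatorname{rank}(\boldsymbol{T}-\lambda I)\ge m-1$, which gives the same conclusion. Either version finishes the lemma, and there is no real obstacle in it.

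The delicate part is the intended application. The paper invokes this lemma for $\boldsymbol{L}=\text{diag}(\boldsymbol{P}\boldsymbol{1}_m)-\boldsymbol{P}$, but $\boldsymbol{P}_{ij}=p^{-|i-j|}$ is a full matrix, not tridiagonal. To use the lemma, I would first reduce $\boldsymbol{L}$ to a tridiagonal matrix without changing the spectrum. The natural tool is the known inverse of the Kac--Murdock--Szegő matrix: $\boldsymbol{P}^{-1}$ is tridiagonal, with off-diagonal entries $-p^{-1}/(1-p^{-2})\neq 0$.

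That does not immediately handle $\boldsymbol{L}$, because $\boldsymbol{L}$ is $\boldsymbol{P}$ shifted by a nonconstant diagonal matrix. So the step I expect to be the main obstacle is transferring the tridiagonal structure of $\boldsymbol{P}^{-1}$ to $\boldsymbol{L}$. The first approach I would try is a similarity or congruence transformation that turns $\boldsymbol{L}$ into an irreducible symmetric tridiagonal matrix. Failing that, I would apply the same rank argument directly to $\boldsymbol{L}-\lambda I$: find an $(m-1)\times(m-1)$ minor that stays nonzero for every $\lambda$. This can be checked by exploiting the geometric-decay structure of the entries $p^{-|i-j|}$.
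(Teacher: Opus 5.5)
Your proof is correct and is essentially the paper's: both solve $(\boldsymbol{T}-\lambda\boldsymbol{I})\boldsymbol{x}=\boldsymbol{0}$ row by row and use $b_i\neq0$ to show that $x_1$ determines the whole vector, so every eigenspace has dimension at most one. You also state explicitly that real symmetry makes $\boldsymbol{T}$ diagonalizable, so geometric multiplicity one forces algebraic multiplicity one; the paper leaves this step implicit, and it is genuinely needed, since the complex symmetric tridiagonal matrix $\begin{pmatrix}1&i\\ i&-1\end{pmatrix}$ has nonzero off-diagonal entries but is nonzero nilpotent, so its only eigenvalue $0$ is repeated.
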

\begin{proof}
    Suppose
    \begin{align*}
        \boldsymbol{T}=\begin{pmatrix}
            a_1&b_1&\\
            b_1&a_2&b_2\\
            &b_2&\ddots&\ddots\\
            & &\ddots&\ddots&b_{n-1}\\
            &&&b_{n-1}&a_n
        \end{pmatrix}
    \end{align*}
    with $b_i\neq0$ be an irreducible symmetric tridiagonal matrix. Consider $(\boldsymbol{T}-\lambda\boldsymbol{I})\boldsymbol{x}=\boldsymbol{0}$, which is
    \begin{align*}
        \begin{cases}
            (a_1-\lambda)x_1+b_1x_2=0,\\
            b_{i-1}x_{i-1}+(a_i-\lambda)x_i+b_ix_{i+1}=0,&i=2,\dots,n-1;\\
            b_{n-1}x_{n-1}+(a_n-\lambda)x_n=0.
        \end{cases}
    \end{align*}
    Since $b_i\neq0$, fixing $x_1$, uniquely determines the remaining variables. This implies that $\dim\ker(\boldsymbol{T}-\lambda\boldsymbol{I})\leq1$. Then the eigenvalues of $\boldsymbol{T}$ are distinct.
\end{proof}

\begin{lemma}
\label{lem:distinct_eigenvalues_tridiagonal_matrix}
    Let 
    \begin{align*}
        \boldsymbol{T}=\begin{pmatrix}
            a_1&b_1&\\
            c_1&a_2&b_2\\
            &c_2&\ddots&\ddots\\
            & &\ddots&\ddots&b_{n-1}\\
            &&&c_{n-1}&a_n
        \end{pmatrix}
    \end{align*}
    be a tridiagonal matrix with $b_ic_i>0$, then the eigenvalues of $\boldsymbol{T}$ are distinct.
\end{lemma}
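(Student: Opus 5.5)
The plan is to reduce to Lemma \ref{lem:distinct_eigenvalues_symmetric_tridiagonal_matrix} by a diagonal similarity that symmetrizes $\boldsymbol{T}$. Since $b_ic_i>0$, I set $d_1=1$ and define recursively
\begin{align*}
    d_{i+1}=d_i\sqrt{\frac{c_i}{b_i}},\qquad i=1,\dots,n-1.
\end{align*}
Each $d_i$ is then a well-defined positive real number, because $c_i/b_i=b_ic_i/b_i^2>0$. With $\boldsymbol{S}=\text{diag}(d_1,\dots,d_n)$, I will check that $\boldsymbol{S}\boldsymbol{T}\boldsymbol{S}^{-1}$ is tridiagonal. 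Its diagonal is unchanged, and its off-diagonal entries are $(\boldsymbol{S}\boldsymbol{T}\boldsymbol{S}^{-1})_{i,i+1}=b_i d_i/d_{i+1}$ and $(\boldsymbol{S}\boldsymbol{T}\boldsymbol{S}^{-1})_{i+1,i}=c_i d_{i+1}/d_i$.

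By the choice of $d_{i+1}/d_i=\sqrt{c_i/b_i}$, both entries equal $b_i\sqrt{c_i/b_i}$. This is $\operatorname{sgn}(b_i)\sqrt{b_ic_i}$, a real nonzero number. So $\boldsymbol{S}\boldsymbol{T}\boldsymbol{S}^{-1}$ is a real symmetric tridiagonal matrix with all off-diagonal entries nonzero, i.e.\ irreducible.

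Similar matrices have the same eigenvalues with the same algebraic multiplicities. Lemma \ref{lem:distinct_eigenvalues_symmetric_tridiagonal_matrix} shows that every eigenspace of the symmetrized matrix is at most one-dimensional. Because that matrix is real symmetric, hence diagonalizable, geometric and algebraic multiplicities agree, so its eigenvalues are simple. The same then holds for $\boldsymbol{T}$.

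As an alternative that avoids the similarity, one could repeat the argument of Lemma \ref{lem:distinct_eigenvalues_symmetric_tridiagonal_matrix} directly. Since each $b_i\neq0$, the recursion shows $\dim\ker(\boldsymbol{T}-\lambda\boldsymbol{I})\le1$. However, that only controls geometric multiplicity, and $\boldsymbol{T}$ is not symmetric, so diagonalizability would still be needed. The symmetrization supplies exactly this, which is why I prefer it.

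The only point requiring care is this passage from "one-dimensional eigenspaces" to "distinct eigenvalues". It needs diagonalizability, which the real symmetric similarity provides. The positivity $b_ic_i>0$ is used precisely to make the square roots real, so that the conjugated matrix is genuinely real symmetric rather than merely complex symmetric.

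In the application, $\boldsymbol{L}=\text{diag}(\boldsymbol{P}\boldsymbol{1}_m)-\boldsymbol{P}$ is not itself tridiagonal. The paper will presumably first reduce it to tridiagonal form with positive off-diagonal products, and then invoke this lemma.
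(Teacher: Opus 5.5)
Your strategy is the same as the paper's. You conjugate $\boldsymbol{T}$ by a positive diagonal matrix with $d_{i+1}/d_i=\sqrt{c_i/b_i}$ to make it symmetric tridiagonal, then invoke Lemma~\ref{lem:distinct_eigenvalues_symmetric_tridiagonal_matrix}.

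There is, however, a direction slip in your verification. For $\boldsymbol{S}\boldsymbol{T}\boldsymbol{S}^{-1}$ the off-diagonal entries are $b_id_i/d_{i+1}$ and $c_id_{i+1}/d_i$, as you say. With your recursion these become $b_i\sqrt{b_i/c_i}$ and $c_i\sqrt{c_i/b_i}$, which differ unless $|b_i|=|c_i|$. For example, $b_1=1$, $c_1=4$ gives $1/2$ and $8$. The common value $b_i\sqrt{c_i/b_i}$ you state is what $\boldsymbol{S}^{-1}\boldsymbol{T}\boldsymbol{S}$ produces, and that is the conjugation the paper uses. So either conjugate that way, or keep $\boldsymbol{S}\boldsymbol{T}\boldsymbol{S}^{-1}$ and set $d_{i+1}=d_i\sqrt{b_i/c_i}$.

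With that fixed, your argument is complete and more careful than the paper's. You check explicitly that the new off-diagonal entries $\operatorname{sgn}(b_i)\sqrt{b_ic_i}$ are real and nonzero. That is what makes the conjugated matrix genuinely real symmetric and irreducible, so the symmetric lemma applies. You also point out that diagonalizability is what upgrades $\dim\ker(\cdot-\lambda\boldsymbol{I})\le 1$ to simple eigenvalues. The paper leaves both points implicit.
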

\begin{proof}
    By Lemma \ref{lem:distinct_eigenvalues_symmetric_tridiagonal_matrix}, it suffices to find a invertible diagonal matrix $\boldsymbol{D}=\text{diag}(d_1,\dots,d_n)$ to symmetrize $\boldsymbol{T}$. So we need
    \begin{align*}
        (\boldsymbol{D}^{-1}\boldsymbol{T}\boldsymbol{D})_{i,i+1}=(\boldsymbol{D}^{-1}\boldsymbol{T}\boldsymbol{D})_{i+1,i}
    \end{align*}
    which means
    \begin{align*}
        d_i^{-1}b_id_{i+1}=d_{i+1}^{-1}c_id_{i}
    \end{align*}
    therefore, we have
    \begin{align*}
        \frac{d_{i+1}}{d_i}=\sqrt{\frac{c_i}{b_i}}
    \end{align*}
    So if we fix $d_1\neq 0$, we can construct such $\boldsymbol{D}$.
\end{proof}

\begin{theorem}
    When $\sqrt{p}+\sqrt{p^{1-m}}>2$, then the eigenvalues of $\boldsymbol{L}$ are distinct.
\end{theorem}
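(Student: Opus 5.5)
The strategy is to reduce to the tridiagonal situation of Lemma \ref{lem:distinct_eigenvalues_symmetric_tridiagonal_matrix} and Lemma \ref{lem:distinct_eigenvalues_tridiagonal_matrix}, even though $\boldsymbol{L}$ itself is dense. Write $r=p^{-1}$, $\boldsymbol{\Delta}=\text{diag}(\boldsymbol{P}\boldsymbol{1}_m)=\text{diag}(d_1,\dots,d_m)$, so $\boldsymbol{L}=\boldsymbol{\Delta}-\boldsymbol{P}$. The key observation is that $\boldsymbol{P}$, with $(\boldsymbol{P})_{ij}=r^{|i-j|}$, is a Kac--Murdock--Szeg\H{o} matrix. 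Its inverse is explicitly tridiagonal:
\begin{align*}
    \boldsymbol{P}^{-1}=\frac{1}{1-r^2}\begin{pmatrix}
        1&-r&\\
        -r&1+r^2&-r\\
        &\ddots&\ddots&\ddots\\
        &&-r&1+r^2&-r\\
        &&&-r&1
    \end{pmatrix}.
\end{align*}
Since $\boldsymbol{L}$ is real symmetric, geometric and algebraic multiplicities agree. It therefore suffices to show $\dim\ker(\boldsymbol{L}-\lambda\boldsymbol{I})\le 1$ for every real $\lambda$.

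First I would rewrite the eigen-equation. We have $\boldsymbol{L}\boldsymbol{x}=\lambda\boldsymbol{x}$ if and only if $\boldsymbol{P}\boldsymbol{x}=(\boldsymbol{\Delta}-\lambda\boldsymbol{I})\boldsymbol{x}$, which holds if and only if $\boldsymbol{T}_\lambda\boldsymbol{x}=\boldsymbol{0}$ with $\boldsymbol{T}_\lambda:=\boldsymbol{P}^{-1}(\boldsymbol{\Delta}-\lambda\boldsymbol{I})-\boldsymbol{I}$. This matrix is tridiagonal, with superdiagonal entries $-r(d_{i+1}-\lambda)/(1-r^2)$ and subdiagonal entries $-r(d_i-\lambda)/(1-r^2)$.

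Second, I would run the recursion argument of Lemma \ref{lem:distinct_eigenvalues_symmetric_tridiagonal_matrix} on $\boldsymbol{T}_\lambda\boldsymbol{x}=\boldsymbol{0}$. Suppose $\lambda\ne d_j$ for $j=2,\dots,m$. Then the superdiagonal has no zero entries, so $x_1$ determines $x_2,\dots,x_m$ successively, giving $\dim\ker\le 1$. Symmetrically, if $\lambda\ne d_j$ for $j=1,\dots,m-1$, one can recurse from $x_m$ backwards using the subdiagonal. So everything is reduced to the claim that no eigenvalue $\lambda$ of $\boldsymbol{L}$ coincides with one of the values $d_j$. More generally, a single coincidence is harmless if it can be bypassed by recursing from the other end. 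I would note that $d_j=d_{m+1-j}$ and that $d_j$ increases toward the middle index, with $d_1=\frac{1-r^m}{1-r}$.

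The main obstacle is ruling out $\lambda=d_j$. Here the hypothesis $\sqrt p+\sqrt{p^{1-m}}>2$ should enter; its form $\sqrt{p}+\sqrt{p^{1-m}}$ suggests it comes from a symmetrization or Cauchy--Schwarz step like the $\sqrt{c_i/b_i}$ of Lemma \ref{lem:distinct_eigenvalues_tridiagonal_matrix}. My first attempt would be to assume $\lambda=d_j$ and extract the $j$-th row of $\boldsymbol{P}\boldsymbol{x}=(\boldsymbol{\Delta}-\lambda)\boldsymbol{x}$, which gives $(\boldsymbol{P}\boldsymbol{x})_j=0$. Then I would split $\boldsymbol{x}$ into its parts on indices $<j$ and $>j$, where the recursions from each end still work. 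Each part is then a multiple of an explicitly computable solution of the three-term recurrence $-r y_{i-1}+(1+r^2)y_i-r y_{i+1}=(1-r^2)(d_i-\lambda)^{-1}\cdots$. I would compare the two growth rates, roughly governed by $r^{\pm1/2}$, against the boundary contributions, which carry the factor $p^{(1-m)/2}$. The aim is an inequality showing that the two one-sided solutions cannot be matched at $j$ unless $\sqrt p+\sqrt{p^{1-m}}\le 2$. If that direct matching gets messy, the fallback is a Rayleigh-quotient or Gershgorin-type bound on $\sigma(\boldsymbol{L})$ that places every nonzero eigenvalue strictly between consecutive distinct values of $\{d_j\}$ or outside their range, again using the hypothesis. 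Once $\lambda\notin\{d_j\}$ (or only a one-sided coincidence) is established, the recursion of the previous step finishes the proof.
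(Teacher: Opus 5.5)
Your reformulation is the same as the paper's. $\boldsymbol{P}^{-1}$ is the tridiagonal Kac--Murdock--Szeg\H{o} inverse, $\boldsymbol{L}\boldsymbol{x}=\lambda\boldsymbol{x}$ is equivalent to $\boldsymbol{T}_\lambda\boldsymbol{x}=\boldsymbol{0}$, and the recursion gives $\dim\ker(\boldsymbol{L}-\lambda\boldsymbol{I})\le 1$ once every off-diagonal entry $-r(d_i-\lambda)/(1-r^2)$ is nonzero. However, the step you flag as the main obstacle, ruling out $\lambda=d_j$, is the only place the hypothesis $\sqrt p+\sqrt{p^{1-m}}>2$ can enter, and your proposal never proves it. The hypothesis is not used anywhere. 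Your primary ``matching'' plan is not yet an argument: the coefficients $d_i-\lambda$ vary with $i$, so the one-sided solutions have no explicit form with clean growth rates $r^{\pm1/2}$. The recurrence you display is also left unfinished, and no inequality is derived. Your guess that the square roots come from a symmetrization or Cauchy--Schwarz step is likewise off.

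The missing piece is the crude bound you list only as a fallback, and it already places every eigenvalue below the whole range of $\{d_j\}$. The diagonal entry $d_i-1=\sum_{j\neq i}p^{-|i-j|}$ of $\boldsymbol{L}$ equals its absolute off-diagonal row sum, so Gershgorin gives $\sigma(\boldsymbol{L})\subseteq[0,2\max_i(d_i-1)]$. We have $d_i=\frac{p+1-p^{-i}-p^{i+1-m}}{p-1}$ for $i=0,\dots,m-1$. By AM--GM, $p^{-i}+p^{i+1-m}\ge 2p^{(1-m)/2}$, and by convexity $p^{-i}+p^{i+1-m}\le 1+p^{1-m}$. Hence
\[
\lambda_{\max}(\boldsymbol{L})\le\frac{4\bigl(1-p^{(1-m)/2}\bigr)}{p-1},\qquad \min_i d_i=\frac{p-p^{1-m}}{p-1}.
\]
Set $a=\sqrt p$ and $b=\sqrt{p^{1-m}}\in(0,1]$. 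The inequality $4(1-b)<a^2-b^2$ becomes $a^2>(2-b)^2$, i.e.\ $a+b>2$, which is exactly the hypothesis. So every eigenvalue satisfies $\lambda<\min_i d_i$, and all $d_i-\lambda>0$. Then both off-diagonals of $\boldsymbol{T}_\lambda$ are nonzero, indeed with $b_ic_i>0$ as in Lemma \ref{lem:distinct_eigenvalues_tridiagonal_matrix}, and your recursion closes the proof. This is precisely the paper's argument; without this computation your proposal leaves the essential step open.
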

\begin{proof}
Consider $\boldsymbol{L}\boldsymbol{x}=\lambda\boldsymbol{x}$, which is equivalent to $\boldsymbol{P}\boldsymbol{x}=(\text{diag}(\boldsymbol{P}\boldsymbol{1}_m)-\lambda\boldsymbol{I})\boldsymbol{x}$, since
\begin{align*}
    \boldsymbol{P}^{-1}=\frac{1}{1-p^{-2}}\begin{pmatrix}
            1&-p^{-1}&\\
            -p^{-1}&1+p^{-2}&-p^{-1}\\
            &-p^{-1}&\ddots&\ddots\\
            & &\ddots&1+p^{-2}&-p^{-1}\\
            &&&-p^{-1}&1
        \end{pmatrix}
\end{align*}
then $\boldsymbol{P}^{-1}(\text{diag}(\boldsymbol{P}\boldsymbol{1}_m)-\lambda\boldsymbol{I})\boldsymbol{x}=\boldsymbol{x}$.

And by Gershgorin circle theorem, we have
\begin{align*}
    0\leq\sigma(\boldsymbol{L})\leq 2\max_i\sum_{j\neq i}p^{-|i-j|}=2\max_i\frac{2-p^{-i}+p^{i+1-m}}{p-1}\leq 2\frac{2-2p^{\frac{m-1}{2}}}{p-1}
\end{align*}
Note that
\begin{align*}
    \min_i\sum_{j}p^{-|i-j|}=\min_i\frac{p+1-p^{-i}-p^{i+1-m}}{p-1}=\frac{p-p^{1-m}}{p-1}
\end{align*}
so when $2\frac{2-2p^{\frac{m-1}{2}}}{p-1}<\frac{p-p^{1-m}}{p-1}$, i.e.
\begin{align*}
    p>\left(2-p^{\frac{m-1}{2}}\right)^2\Leftrightarrow \sqrt{p}+\sqrt{p^{1-m}}>2
\end{align*}
then the diagonal elements in $\text{diag}(\boldsymbol{P}\boldsymbol{1}_m)-\lambda\boldsymbol{I}$ are greater than $0$. Then $\boldsymbol{P}^{-1}(\text{diag}(\boldsymbol{P}\boldsymbol{1}_m)-\lambda\boldsymbol{I})$ satisfies the condition in Lemma \ref{lem:distinct_eigenvalues_tridiagonal_matrix}. Then
\begin{align*}
    \dim\ker(\boldsymbol{L}-\lambda\boldsymbol{I})=\dim\ker(\text{diag}(\boldsymbol{P}\boldsymbol{1}_m)-\boldsymbol{P}-\lambda\boldsymbol{I})=\dim\ker(\boldsymbol{P}^{-1}(\text{diag}(\boldsymbol{P}\boldsymbol{1}_m)-\lambda\boldsymbol{I})-\boldsymbol{I})\leq1
\end{align*}
Thus the eigenvalues of $\boldsymbol{L}$ are distinct.    
\end{proof}

Thus we have proved:

\begin{theorem}
\label{thm:spectrum_D_general_m}
    The eigenvalues of $D$ are $\lambda_{0,l}\in\sigma(\boldsymbol{R})$ with $\lambda_{0,0}=0$, and $\lambda_{n,l}=-p^{n-1}-p^{n-2}+\frac{p^{-l}+p^{l+1-m}}{p}$ for $n\geq 1$, with the corresponding eigenfunctions $\phi_0^l(x)=\sum_{k=0}^{m-1}q_{k,l}\boldsymbol{1}_{p^{k}\mathbb{Z}_p^*}(x)$ and $\phi_{h,c,n}^l(x)=\phi_{h,c,n}(p^{-l}x)\boldsymbol{1}_{p^{l}\mathbb{Z}_p^*}(x)$, where $l\in\{0,1,2,\dots,m-1\}$. And the multiplicities are $m_{0,0}=1$ (suppose $\sqrt{p}+\sqrt{p^{1-m}}>2$), $m_{1,l}=p-2$, and $m_{n,l}=(p-1)^2p^{n-2}$ for $n\geq2$.
\end{theorem}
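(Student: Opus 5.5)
The statement collects results established above, so the proof is mostly an assembly. I would argue in three stages: eigenpairs, completeness, multiplicities.

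\emph{Eigenpairs.} For the non-constant part, fix $l$ and the stratum $p^l\mathbb{Z}_p^*$. The computation \eqref{eq:compute_eigfunction_general_m} splits $D\phi^l_{h,c,n}$ into two pieces. The integral over $p^l\mathbb{Z}_p^*$ is rescaled to $\mathbb{Z}_p^*$ and reduces to the $m=1$ operator, so Theorem~\ref{thm:spectrum_D_m=1} gives the factor $\lambda_n$. The integral over the other strata contributes the constant $\frac{p^{-l}+p^{l+1-m}-2}{p}$. Off the stratum, the first term vanishes because the characters integrate to zero (Lemma~\ref{lem:orthogonal_characters_m=1}). Together these give the eigenvalue $\lambda_{n,l}$.

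For the stratum-constant functions, $D$ restricted to $\mathrm{span}\{\boldsymbol{1}_{p^k\mathbb{Z}_p^*}\}$ acts by the symmetric matrix $\boldsymbol{R}$. Its eigenvectors $\boldsymbol{q}_l$ give $\phi_0^l$ with eigenvalues $\lambda_{0,l}\in\sigma(\boldsymbol{R})$. Since $\boldsymbol{R}\boldsymbol{1}_m=0$, we get $\lambda_{0,0}=0$, with $\phi_0^0$ proportional to the constant function.

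\emph{Completeness.} Next I would invoke Lemma~\ref{lem:orthogonal_characters_general_m}, which shows the listed functions form a complete orthogonal eigenbasis. Hence no other eigenvalues occur: any eigenfunction expands in this basis, and only basis components sharing its eigenvalue can appear.

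\emph{Multiplicities.} For $n\ge1$, I would count characters of exact conductor $n$. That count is $|H_n|-|H_{n-1}|$, namely $p-2$ for $n=1$ (excluding the trivial character) and $(p-1)^2p^{n-2}$ for $n\ge2$; each is attached to a fixed $l$. For $m_{0,0}=1$, I would use the theorem above: when $\sqrt p+\sqrt{p^{1-m}}>2$, the eigenvalues of $\boldsymbol{L}=-\boldsymbol{R}/(1-p^{-1})$ are distinct, so $0$ is simple in $\sigma(\boldsymbol{R})$.

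This last point is the main obstacle. The paper's Gershgorin-based argument needs checking, in particular positivity of the diagonal of $\mathrm{diag}(\boldsymbol{P}\boldsymbol{1}_m)-\lambda\boldsymbol{I}$, which is what lets Lemma~\ref{lem:distinct_eigenvalues_tridiagonal_matrix} apply to $\boldsymbol{P}^{-1}(\mathrm{diag}(\boldsymbol{P}\boldsymbol{1}_m)-\lambda\boldsymbol{I})$. As a backup I would argue directly. Since $\boldsymbol{L}$ is a weighted graph Laplacian with all off-diagonal weights $p^{-|i-j|}>0$, the underlying graph is connected. Then $\boldsymbol{x}^T\boldsymbol{L}\boldsymbol{x}=\tfrac12\sum_{i,j} p^{-|i-j|}(x_i-x_j)^2=0$ forces $\boldsymbol{x}$ to be constant, so $\ker\boldsymbol{R}$ is one-dimensional. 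This gives $m_{0,0}=1$ unconditionally, which is stronger than the stated hypothesis.

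The statement does not claim that coincidences such as $\lambda_{n,l}=\lambda_{n',l'}$ across different $l$ are excluded, so $m_{n,l}$ should be read as the count per pair $(n,l)$.
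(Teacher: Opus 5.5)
Your proposal is correct. For the eigenpairs, completeness, and the counts $m_{1,l}=p-2$, $m_{n,l}=(p-1)^2p^{n-2}$, it is the same assembly the paper performs: the splitting \eqref{eq:compute_eigfunction_general_m}, the matrix $\boldsymbol{R}$ on stratum-constant functions, Lemma~\ref{lem:orthogonal_characters_general_m}, and counting characters of exact conductor $n$.

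The genuine difference is the proof of $m_{0,0}=1$. The paper derives it from the theorem that all eigenvalues of $\boldsymbol{L}$ are distinct, using Gershgorin and the symmetrizable tridiagonal matrix $\boldsymbol{P}^{-1}(\mathrm{diag}(\boldsymbol{P}\boldsymbol{1}_m)-\lambda\boldsymbol{I})$. Your worry about the positivity of the diagonal is resolved once the sign typos in the displayed bounds are corrected:
\begin{itemize}
\item By AM--GM, $\sum_{j\neq i}p^{-|i-j|}=\frac{2-p^{-i}-p^{i+1-m}}{p-1}\le\frac{2-2p^{-(m-1)/2}}{p-1}$, so $\sigma(\boldsymbol{L})\subset\bigl[0,\frac{4-4p^{-(m-1)/2}}{p-1}\bigr]$.
\item This interval lies strictly below $\min_i(\boldsymbol{P}\boldsymbol{1}_m)_i=\frac{p-p^{1-m}}{p-1}$ exactly when $(2-p^{-(m-1)/2})^2<p$, i.e.\ $\sqrt{p}+\sqrt{p^{1-m}}>2$.
\end{itemize}
So the diagonal is positive and the paper's argument is sound.

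Your quadratic-form argument, $\boldsymbol{x}^{\top}\boldsymbol{L}\boldsymbol{x}=\frac12\sum_{i\neq j}p^{-|i-j|}(x_i-x_j)^2$ with all weights positive, is shorter and unconditional. It is also exactly what justifies the paper's assertion $\mathrm{rank}(\boldsymbol{R})=m-1$, which is stated without proof when $\boldsymbol{R}$ is introduced. Combined with the trivial bound $\lambda_{n,l}\le\lambda_{1,l}\le-(1-p^{-m})<0$ for $n\ge1$, it shows the parenthetical hypothesis is unnecessary for $m_{0,0}=1$.

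What the paper's route buys in return is simplicity of every eigenvalue of $\boldsymbol{R}$. This is reflected in the ordering $0=\mu_0<\mu_1<\cdots<\mu_{m-1}$ in Lemma~\ref{lem:smallest_non-zero_eigenvalue_-A}. However, that lemma only uses $\mu_1>0$, which your argument also provides.
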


\begin{remark}
    $\lambda_{n,l}$ may not be disjoint as $\lambda_{n,l}=\lambda_{n,m-l}$ for $0\leq l\leq m-1,n\geq1$.
\end{remark}

Then the Green's function is
\begin{align}
    G(x,y)=\frac{1}{1-p^{-1}}\left(\sum_{k=0}^{m-1}\sum_{\phi_n^k\in\text{Char}(p^k\mathbb{Z}_p^*)\backslash\{1\}}\frac{\phi_n^k(x)\overline{\phi_n^k(y)}}{\lambda_{n,k}}+\sum_{k=1}^{m-1}\frac{\phi_0^k(x)\overline{\phi_0^k(y)}}{\lambda_{0,k}}\right)+C(|y|_p).
\end{align}
Similarly, letting $H_n^{l}$ denote the set of characters generated from $p^l\mathbb{Z}_p^*$ with conductor at most $n$ and defining $C_n^l:=H_n^l\backslash H_{n-1}^l$. Considering
\begin{align*}
    \sum_{k=0}^{m-1}\sum_{\phi^k\in H_{n}^k}\phi^k(x)\overline{\phi^k(y)}.
\end{align*}
When $v_p(x)\neq v_p(y)$ (i.e. $v_p(d(x,y))=0$), the sum vanishes. 
When $v_p(x)=v_p(y)=l$ (i.e. $v_p(d(x,y))\geq0$), then
    \begin{align*}
            \sum_{\phi^{l}\in H_{n}^l}\phi^{l}(x)\overline{\phi^{l}(y)}&=\sum_{\phi^0\in H_{n}^0}\phi^0(p^{-l}x)\overline{\phi^0(p^{-l}y)}\\
            &=\boldsymbol{1}_{1+p^n\mathbb{Z}_p}(xy^{-1})\varphi(p^n).
    \end{align*}
So when $v_p(d(x,y))=M$ and $v_p(x)=v_p(y)=l$, we have the Green's function
\begin{equation}
    \begin{aligned}
        G(x,y)=&\frac{1}{1-p^{-1}}\left(\sum_{n=1}^{M+1}\sum_{\phi\in C_n^l}\frac{\phi(x)\overline{\phi(y)}}{\lambda_{n,l}}+\sum_{k=1}^{m-1}\frac{\phi_0^k(x)\overline{\phi_0^k(y)}}{\lambda_{0,k}}\right)+C(|y|_p)\\
        =&\frac{1}{1-p^{-1}}\left(\sum_{n=1}^{M}\frac{\varphi(p^n)-\varphi(p^{n-1})}{\lambda_{n,l}}-\frac{\varphi(p^M)}{\lambda_{M+1,l}}+\sum_{k=1}^{m-1}\frac{q_{v_p(x)k}q_{v_p(y)k}}{\lambda_{0,k}}\right)+C(|y|_p)\\
        =&-\sum_{n=1}^{M}\frac{(p-1)p^n}{p^{n}+p^{n-1}-(p^{-l}+p^{l+1-m})}+\frac{p^{M+1}}{p^{M+1}+p^{M}-(p^{-l}+p^{l+1-m})}\\
        &+\sum_{k=1}^{m-1}\frac{q_{v_p(x)k}q_{v_p(y)k}}{(1-p^{-1})\lambda_{0,k}}+C(|y|_p).
    \end{aligned}
\end{equation}
Finally, we verify that $G(x,y)$ is actually the Green's function:

When $x\neq y$,
    \begin{align*}
        DG(x,y)&=D\frac{1}{1-p^{-1}}\left(\sum_{k=0}^{m-1}\sum_{\phi_n^k\in\text{Char}(p^k\mathbb{Z}_p^*)\backslash\{1\}}\frac{\phi_n^k(x)\overline{\phi_n^k(y)}}{\lambda_{n,k}}+\sum_{k=1}^{m-1}\frac{\phi_0^k(x)\overline{\phi_0^k(y)}}{\lambda_{0,k}}\right)\\
        &=\frac{1}{1-p^{-1}}\left(\sum_{k=0}^{m-1}\sum_{\phi_n^k\in\text{Char}(p^k\mathbb{Z}_p^*)\backslash\{1\}}\phi_n^k(x)\overline{\phi_n^k(y)}+\sum_{k=1}^{m-1}\phi_0^k(x)\overline{\phi_0^k(y)}\right)\\
        &=\frac{1}{1-p^{-1}}\left(\left(\sum_{n=1}^{M}(\varphi(p^n)-\varphi(p^{n-1}))-\varphi(p^M)\right)\delta_{v_p(x)}^{v_p(y)}+\sum_{k=1}^{m-1}q_{v_p(x)k}q_{v_p(y)k}\right)\\
        &=\frac{1}{1-p^{-1}}\left(-\varphi(1)\delta_{v_p(x)}^{v_p(y)}+\sum_{k=0}^{m-1}q_{v_p(x)k}q_{v_p(y)k}-q_{v_p(x)0}q_{v_p(y)0}\right)\\
        &=\frac{1}{1-p^{-1}}\left(-\delta_{v_p(x)}^{v_p(y)}+\delta_{v_p(x)}^{v_p(y)}-\frac{1}{m}\right)\\
        &=-\frac{1}{m(1-p^{-1})}
    \end{align*}
as $\boldsymbol{Q}$ is an orthogonal matrix and $\boldsymbol{q}_0=\frac{1}{\sqrt{m}}\boldsymbol{1}$.

Then $DG(x,y)+\frac{1}{m(1-p^{-1})}$ is a distribution supported on the diagonal. And we notice that
    \begin{align*}
        &\int_{\bigcup_{k=0}^{m-1}p^k\mathbb{Z}_p^*}DG(x,y)+\frac{1}{m(1-p^{-1})}dx\\
        =&\frac{1}{1-p^{-1}}\left(\sum_{k=0}^{m-1}\sum_{\phi_n^k\in\text{Char}(p^k\mathbb{Z}_p^*)\backslash\{1\}}\overline{\phi_n^k(y)}\int_{p^k\mathbb{Z}_p^*}\phi_n^k(x)dx+\sum_{k=1}^{m-1}\overline{\phi_0^k(y)}\int_{\mathbb{Z}_p^*}\phi_0^k(x)dx\right)+1\\
        =&1.
    \end{align*}
So $G(x,y)$ satisfies $DG(x,y)=\delta_y(x)-\frac{1}{m(1-p^{-1})}$.

Furthermore, we can compute the heat kernel by using spectrum of $D$ as
\begin{equation}
    \begin{aligned}
        H(t,x,y)=&\frac{1}{1-p^{-1}}\left(\sum_{n=1}^{M+1}\sum_{\phi\in C_n^l}e^{\lambda_{n,l}t}\phi(x)\overline{\phi(y)}+\sum_{k=1}^{m-1}e^{\lambda_{0,k}t}\phi_0^k(x)\overline{\phi_0^k(y)}\right)\\
        =&\frac{1}{1-p^{-1}}\left(\sum_{n=1}^{M}e^{\lambda_{n,l}t}(\varphi(p^n)-\varphi(p^{n-1}))-e^{\lambda_{M+1,l}t}\varphi(p^M)+\sum_{k=1}^{m-1}e^{\lambda_{0,k}t}q_{v_p(x)k}q_{v_p(y)k}\right)\\
        =&(p-1)\sum_{n=1}^{M}e^{-(p^{n-1}+p^{n-2}-\frac{p^{-l}+p^{l+1-m}}{p})t}p^{n-1}-e^{-(p^{M}+p^{M-1}-\frac{p^{-l}+p^{l+1-m}}{p})t}p^M\\
        &+\frac{p}{p-1}\sum_{k=1}^{m-1}e^{\lambda_{0,k}t}q_{v_p(x)k}q_{v_p(y)k},
    \end{aligned}
\end{equation}
which is also an important fundamental quantity.

\begin{remark}
    Because $\mathbb{Q}_p^{\times}/p^{m\mathbb{Z}}\cong p^{\mathbb{Z}/m\mathbb{Z}}\times\mathbb{Z}_p^*$ is a multiplicative group, the characters of $p^{\mathbb{Z}/m\mathbb{Z}}$ are given by
    \begin{align*}
        \chi_{k}(p^l)=e^{\frac{2\pi ik}{m}l}
    \end{align*}
    for $k,l\in\{0,1,\dots,m-1\}$. So $\forall\ x=p^l(a_0+a_1p+\dots)\in\mathbb{Q}_p^{\times}/p^{m\mathbb{Z}}$ with $a_0\neq0$, we have the character
    \begin{align*}
        \chi_{k,h,c,n}(x)=\chi_{k}(p^l)\chi_h(a_0)\chi_{c,n}\left(\frac{x}{p^la_0}\right)=e^{\frac{2\pi ik}{m}l}e^{\frac{2\pi ih}{p-1}\text{ord}(a_0)}e^{2\pi i\left\{\frac{c\ln\left(\frac{x}{p^la_0}\right)}{p^{n}}\right\}_p}.
    \end{align*}
    One can check that the eigenfunctions are the linear transformation of the characters on each stratum. So the orthogonality and completeness can be easily obtained by the orthogonality of characters and Fourier transform.
\end{remark}

\begin{remark}
    For general $s$ case, we define the operator $D^s$ on the Tate curve as
    \begin{align*}
        D^su(x)=|x|_p\int_{\mathbb{Q}_p^{\times}/q^{\mathbb{Z}}}\frac{u(z)-u(x)}{|z-x|_p^{1+s}}dz,
    \end{align*}
    which is also self-adjoint, negative semi-definite and preserves locally constant functions. One can check that all our computations carry over to $D^s$, simply by replacing $2$ with $1+s$, so that the eigenfunctions of $D^s$ are identical to those of $D$. Hence, we may use the same method to determine the spectrum of $D^s$, and then obtain its Green's function and heat kernel as a finite sum formula.
\end{remark}

\section{Mean field equation on the finite quotient}
In this section, we consider the mean field equation on the Tate curve
\begin{align}
\label{eq:mean_field_equation_Tate_curve}
    Du+\rho e^u=\rho\delta_y \text{ in }\mathbb{Q}_{p}^{\times}/p^{m\mathbb{Z}}.
\end{align}
To study this equation, we first consider this equation on a finite quotient 
\begin{align}
\label{eq:mean_field_equation_finite_quotient}
    Du+\rho e^u=\rho\delta_y \text{ in }\mathbb{Q}_{p}^{\times}/p^{m\mathbb{Z}}/(1+p^d\mathbb{Z}_{p}).
\end{align}
i.e. by truncating at the $d$-th $p$-adic digits. Points are of the form $x=p^l(b_0+b_1p+\dots+b_{d-1}p^{d-1})$ with $l\in\{0,1,\dots,m-1\}$, $b_0\in\{1,2,\dots,p-1\}$ and $b_i\in\{0,1,\dots,p-1\}$ for $1\leq i\leq d-1$. Because the Haar measure on $\mathbb{Q}_{p}^{\times}/p^{m\mathbb{Z}}/(1+p^d\mathbb{Z}_{p})$ is $\mu^{\times}(p^k(1+p^d\mathbb{Z}_{p}))d\delta=p^{-d}d\delta$, where $d\delta$ is the counting measure, denote $z=p^k(a_0+a_1p+\dots+a_{d-1}p^{d-1})$, we have
\begin{equation}
\begin{aligned}
\label{eq:compute_Du}
Du(x)&=|x|_{p}\int_{\mathbb{Q}_{p}^{\times}/p^{m\mathbb{Z}}/(1+p^d\mathbb{Z}_{p})}|z|_{p}\frac{u(z)-u(x)}{|z-x|_{p}^2}p^{-d}d\delta\\
&=p^{-d}\sum_{\substack{0\leq k\leq m-1\\1\leq a_0\leq p-1\\0\leq a_i\leq p-1\\1\leq i\leq d-1}}\frac{|z|_{p}|x|_{p}}{|z-x|_{p}^2}(u(z)-u(x)).
\end{aligned}    
\end{equation}
So the flat Laplacian on the quotient space of the Tate curve reduces to a graph Laplacian on a finite weighted graph. The existence of this solution is guaranteed based on \cite{huang2020existence}. Let $\boldsymbol{u}=u(p^l\sum_{i=0}^{d-1}b_ip^i)$, $e^{\boldsymbol{u}}=e^{u(p^l\sum_{i=0}^{d-1}b_ip^i)}$, and define $\boldsymbol{A}=p^{-d}\boldsymbol{B}$ as the coefficient matrix of $\boldsymbol{u}$. Note that $\delta_y$ becomes $p^{d}\boldsymbol{e}_y$ because the Haar measure is $p^{-d}d\delta$. Then \eqref{eq:mean_field_equation_finite_quotient} can be rewritten as a matrix equation
\begin{align}
\label{eq:matrix_equation}
\boldsymbol{A}\boldsymbol{u}+\rho e^{\boldsymbol{u}}=\rho p^{d}\boldsymbol{e}_y.
\end{align}
From \eqref{eq:compute_Du}, $\boldsymbol{A}\in\mathbb{R}^{m(p-1)p^{d-1}\times m(p-1)p^{d-1}}$ is a symmetric matrix with zero row and colum sums. Moreover, $\boldsymbol{A}$ is a block matrix where each block is a symmetric circulant matrix, i.e. $\boldsymbol{A}=(\boldsymbol{A}_{ij})$, $1\leq i,j\leq m$ with $\boldsymbol{B}_{ij}=p^{-|i-j|}\boldsymbol{1}\boldsymbol{1}^\top$ for $i\neq j$.

\begin{remark}
    Since the finite quotient can converge to the Tate curve in the sence of Gromov-Hausdorff distance as $d\rightarrow\infty$, the graph Laplacian on this finite graph can converge to the flat Laplacian on the Tate curve. Thus \eqref{eq:mean_field_equation_finite_quotient} serves as a discrete approximation of \eqref{eq:mean_field_equation_Tate_curve}.
\end{remark}

\subsection{Structure of the solution}

\begin{lemma}
\label{lem:rough_upperbound}
If $\boldsymbol{u}$ solves \eqref{eq:matrix_equation}, then $u_i\leq d\ln p, \forall i$. The equality holds if and only if $p=2,m=1$.
\end{lemma}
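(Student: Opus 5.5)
The plan is to integrate the equation \eqref{eq:matrix_equation} against the constant function, which kills the Laplacian term. Summing the rows of \eqref{eq:matrix_equation} and using that $\boldsymbol{A}$ is symmetric with zero row and column sums (as noted after \eqref{eq:compute_Du}), I expect $\boldsymbol{1}^\top\boldsymbol{A}\boldsymbol{u}=0$. This gives
\begin{align*}
    \rho\sum_{i}e^{u_i}=\rho p^{d}\,\boldsymbol{1}^\top\boldsymbol{e}_y=\rho p^{d}.
\end{align*}
Since $\rho>0$, this should yield the discrete mass identity $\sum_i e^{u_i}=p^d$. This is the analogue of $\int e^u\,d\mu^{\times}=1$ on the Tate curve, because the Haar measure of each point of the quotient is $p^{-d}$.

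From the mass identity the bound is immediate. Every term $e^{u_j}$ is strictly positive, so for each fixed $i$ I would write
\begin{align*}
    e^{u_i}=p^d-\sum_{j\neq i}e^{u_j}\leq p^d,
\end{align*}
and hence $u_i\leq d\ln p$.

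For the equality case, $u_i=d\ln p$ forces $\sum_{j\neq i}e^{u_j}=0$. This is impossible as soon as the vertex set has at least two points, since each $e^{u_j}>0$. So equality holds exactly when the finite quotient $\mathbb{Q}_p^{\times}/p^{m\mathbb{Z}}/(1+p^d\mathbb{Z}_p)$ is a single point, that is, when
\begin{align*}
    m(p-1)p^{d-1}=1.
\end{align*}
This forces $m=1$ and $p=2$, and also $d=1$. Conversely, in that degenerate case $\boldsymbol{A}=0$ and $u=d\ln p$ is the solution.

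I do not expect a real obstacle here; the only step needing care is the equality case. Read literally, the vertex count shows that equality requires $d=1$ as well, and that for $p=2$, $m=1$, $d\geq 2$ the inequality is still strict. I would therefore state the equality characterization as ``the quotient consists of one point,'' and note that among the parameters $p,m$ this happens only for $p=2$, $m=1$, with the truncation level at $d=1$.
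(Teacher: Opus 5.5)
Your proposal is correct and is essentially the paper's argument: sum the rows of \eqref{eq:matrix_equation}, use the zero column sums of $\boldsymbol{A}$ to get $\sum_i e^{u_i}=p^d$, then bound each positive term by the total. Your sharper reading of the equality case is also right. The paper's justification that ``there only has one term in the sum'' tacitly requires the quotient to be a single point, i.e.\ $m(p-1)p^{d-1}=1$, which forces $d=1$ as well as $p=2$, $m=1$; for $p=2$, $m=1$, $d\geq 2$ there are $2^{d-1}\geq 2$ points and the inequality is strict.
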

\begin{proof}
From \eqref{eq:matrix_equation}, we have
\begin{align}
\label{eq:matrix_equation_each_row}
\sum_{j}a_{ij}u_j+\rho e^{u_i}=\rho p^{d}\delta_{iy},\ \forall i.
\end{align}
Summing over $i$ yields
\begin{align*}
\sum_{i,j}a_{ij}u_j+\rho \sum_{i}e^{u_i}=\rho p^{d},
\end{align*}
since $\sum_{i,j}a_{ij}u_j=\sum_{j}u_j\sum_{i}a_{ij}=0$, we obtain
\begin{align*}
\sum_{i}e^{u_i}=p^{d},
\end{align*}
then $e^{u_i}\leq p^{d}$, thus $u_i\leq d\ln p$ for all $i$. And the equality holds if and only if $p=2,m=1$ as there only has one term in the sum.
\end{proof}

\begin{lemma}
\label{lem:u_y_smallest}
$u_y$ is the only smallest one in $\boldsymbol{u}$.
\end{lemma}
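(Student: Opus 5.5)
We argue by a discrete minimum principle applied to the rows of \eqref{eq:matrix_equation_each_row}. Write $\boldsymbol{A}=p^{-d}\boldsymbol{B}$, where the off-diagonal entries $b_{ij}=\frac{|z|_p|x|_p}{|z-x|_p^2}$ are strictly positive for every pair $i\neq j$. Since every row of $\boldsymbol{A}$ sums to zero, for each $i$ we can write
\begin{align*}
    \sum_j a_{ij}u_j=p^{-d}\sum_{j\neq i}b_{ij}(u_j-u_i).
\end{align*}
The point is that the graph is complete with positive weights, so the Laplacian at a vertex compares that vertex with \emph{all} other vertices at once.

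First, take an index $i_0$ at which $\boldsymbol{u}$ attains its minimum. Suppose, for contradiction, that $i_0\neq y$. Then row $i_0$ of \eqref{eq:matrix_equation_each_row} reads
\begin{align*}
    p^{-d}\sum_{j\neq i_0}b_{i_0j}(u_j-u_{i_0})=-\rho e^{u_{i_0}}<0.
\end{align*}
Every term on the left is nonnegative because $u_{i_0}$ is minimal, which is a contradiction. Hence every minimizer equals $y$. In other words, $u_y$ is a minimum and no other index attains it, so $u_y<u_j$ for all $j\neq y$. This is exactly the claimed strict uniqueness. We would not even need a separate strong minimum principle, since the contradiction applies to any minimizer other than $y$.

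As an optional sanity check, consider row $y$. It gives $p^{-d}\sum_{j\neq y}b_{yj}(u_j-u_y)=\rho(p^d-e^{u_y})$. By Lemma \ref{lem:rough_upperbound}, $e^{u_y}<p^d$ except in the degenerate case $p=2,m=1$, so the right-hand side is positive, consistent with $u_y$ lying strictly below the others. We would also remark on the degenerate case with $d=1$, where there is a single point: there the statement is vacuous.

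There is no real obstacle here. The only step needing care is to check that $b_{ij}>0$ for all $i\neq j$, that is, that the kernel in \eqref{eq:compute_Du} never vanishes between distinct points of the finite quotient. This holds because $|z-x|_p>0$ is finite and $|z|_p,|x|_p>0$.
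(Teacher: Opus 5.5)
Your proof is correct and follows the paper's own argument: at any minimizer $i\neq y$, positivity of the off-diagonal entries of $\boldsymbol{A}$ and zero row sums make the row equation force $\rho e^{u_i}\le 0$, which is a contradiction. One optional aside is slightly off: for $d=1$ the quotient has $m(p-1)$ points, so it is a single point only when also $p=2$ and $m=1$; this does not affect the proof.
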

\begin{proof}
Let $m=\min_iu_i$. Suppose $\exists\ i\neq y$, s.t. $u_i=m$. From \eqref{eq:matrix_equation_each_row} with $i\neq y$,
\begin{align*}
    \sum_{j}a_{ij}u_j+\rho e^{u_i}=0,
\end{align*}
then $\rho e^{u_i}+a_{ii}u_i=-\sum_{j\neq i}a_{ij}u_j\leq -m\sum_{j\neq i}a_{ij}=ma_{ii}$, so we have
\begin{align*}
    \rho e^{u_i}\leq a_{ii}(u_i-m)=0,
\end{align*}
a contradiction.
\end{proof}

\begin{lemma}
\label{lem:central_symmetric}
If $\boldsymbol{u}$ solves $\boldsymbol{A}\boldsymbol{u}+\rho e^{\boldsymbol{u}}=\boldsymbol{b}$, then $\boldsymbol{J}\boldsymbol{u}$ solves $\boldsymbol{A}\boldsymbol{v}+\rho e^{\boldsymbol{v}}=\boldsymbol{J}\boldsymbol{b}$, where $\boldsymbol{J}=\begin{pmatrix}  & & & 1 \\  & & 1 & \\ & \iddots & &\\ 1 & & &\end{pmatrix}$ is the anti-diagonal permutation matrix.
\end{lemma}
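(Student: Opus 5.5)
The whole lemma rests on a single matrix identity: $\boldsymbol{J}\boldsymbol{A}\boldsymbol{J}=\boldsymbol{A}$, i.e.\ $\boldsymbol{A}$ is centrosymmetric. Granting this, multiply the equation $\boldsymbol{A}\boldsymbol{u}+\rho e^{\boldsymbol{u}}=\boldsymbol{b}$ on the left by $\boldsymbol{J}$. The exponential acts componentwise, so it commutes with any permutation matrix, and $\boldsymbol{J}e^{\boldsymbol{u}}=e^{\boldsymbol{J}\boldsymbol{u}}$. Since $\boldsymbol{J}^2=\boldsymbol{I}$, we can write $\boldsymbol{J}\boldsymbol{A}\boldsymbol{u}=(\boldsymbol{J}\boldsymbol{A}\boldsymbol{J})(\boldsymbol{J}\boldsymbol{u})=\boldsymbol{A}(\boldsymbol{J}\boldsymbol{u})$. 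This gives $\boldsymbol{A}(\boldsymbol{J}\boldsymbol{u})+\rho e^{\boldsymbol{J}\boldsymbol{u}}=\boldsymbol{J}\boldsymbol{b}$, which is exactly the claim with $\boldsymbol{v}=\boldsymbol{J}\boldsymbol{u}$.

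To prove $\boldsymbol{J}\boldsymbol{A}\boldsymbol{J}=\boldsymbol{A}$, I would identify $\boldsymbol{J}$ with a map on the points of $\mathbb{Q}_p^{\times}/p^{m\mathbb{Z}}/(1+p^d\mathbb{Z}_p)$ and show that this map preserves the edge weights $\frac{|z|_p|x|_p}{|z-x|_p^2}$ from \eqref{eq:compute_Du}.

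Order the points $x=p^l(b_0+b_1p+\dots+b_{d-1}p^{d-1})$ lexicographically, with $l$ first and then the digits $b_0,\dots,b_{d-1}$ with $b_0$ most significant, so that each stratum block is circulant as stated. Under this ordering, reversing the index corresponds to
\begin{align*}
l\mapsto m-1-l,\qquad b_0\mapsto p-b_0,\qquad b_i\mapsto p-1-b_i\ (i\geq1).
\end{align*}
On the unit part this is the map $a\mapsto p^d-a$, i.e.\ $a\mapsto -a \bmod p^d$. Up to the precise indexing convention, we therefore expect $\boldsymbol{J}$ to realize the point map $\sigma(p^l a)=p^{m-1-l}(-a)$.

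We then check that $\sigma$ preserves the weights, splitting into two cases.
\begin{itemize}
\item Same stratum ($k=l$). Negating the unit parts preserves $|z-x|_p$, and $|x|_p$, $|z|_p$ change from $p^{-l}$ to $p^{-(m-1-l)}$ in both. The weight $\frac{|z|_p|x|_p}{|z-x|_p^2}=\frac{1}{|a-a'|_p^2}$ is scale-invariant on a stratum, so it is unchanged.
\item Different strata ($k\neq l$). The weight is $p^{-|k-l|}$, which depends only on $|k-l|$. Reflecting $l\mapsto m-1-l$ preserves $|k-l|$.
\end{itemize}
Hence $A_{\sigma(x)\sigma(z)}=A_{xz}$, which is exactly $\boldsymbol{J}\boldsymbol{A}\boldsymbol{J}=\boldsymbol{A}$.

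An alternative route to the same identity is at the block level. The block pattern $(\boldsymbol{A}_{ij})$ is persymmetric in $(i,j)\mapsto(m+1-i,m+1-j)$, because the off-diagonal blocks are $p^{-|i-j|}\boldsymbol{1}\boldsymbol{1}^\top$ and the diagonal blocks are determined by the computation in Section 2. Each diagonal block is a symmetric circulant matrix, which is persymmetric, so it commutes with the small reversal matrix. Combining the two gives $\boldsymbol{J}\boldsymbol{A}\boldsymbol{J}=\boldsymbol{A}$.

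The main obstacle is bookkeeping: confirming that the paper's ordering of points really makes index reversal equal to $x\mapsto$ (reflected stratum, negated unit). The diagonal entries $a_{ii}=-\sum_{j\ne i}a_{ij}$ also need care, because they contain cross-stratum sums whose magnitude depends on $l$. If the ordering check fails, the fallback is the block argument above. Either way, these diagonal entries are invariant: they are determined by the off-diagonal weights in row $i$, and $\sigma$ is a weight-preserving bijection, so row $\sigma(i)$ has the same multiset of off-diagonal weights as row $i$ and hence the same row sum.
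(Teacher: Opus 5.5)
Your proof is correct. The reduction step (multiply by $\boldsymbol{J}$, then use $\boldsymbol{J}e^{\boldsymbol{u}}=e^{\boldsymbol{J}\boldsymbol{u}}$ and $\boldsymbol{J}^2=\boldsymbol{I}$) is exactly the paper's. The difference is in how you justify $\boldsymbol{J}\boldsymbol{A}\boldsymbol{J}=\boldsymbol{A}$.

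The paper gets centrosymmetry in one line from ``$\boldsymbol{A}$ is a symmetric circulant matrix''. Taken literally, this holds only blockwise when $m>1$. One therefore also needs two further facts, both invariant under $l\mapsto m-1-l$:
\begin{itemize}
\item the cross-stratum block pattern $p^{-|k-l|}$;
\item the stratum-dependent diagonal entries $-p^{-d}\bigl(p^{2d-1}-(p^{-l}+p^{l+1-m})p^{d-1}\bigr)$.
\end{itemize}
Your argument supplies both.

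Your main route instead exhibits the involution $p^la\mapsto p^{m-1-l}(-a)$ and checks that it preserves the kernel $|x|_p|z|_p/|z-x|_p^2$. This explains geometrically why the symmetry exists. It also treats in-stratum, cross-stratum and diagonal entries uniformly.

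The price is that your route depends on knowing which point map index reversal realizes. The circulant argument does not: a symmetric circulant block commutes with reversal under any cyclic labelling of the stratum. For example, for odd $p$ you could label a stratum by powers of a generator $g$ of $(\mathbb{Z}/p^d\mathbb{Z})^\times$. Then reversal realizes $a\mapsto g^{-1}a^{-1}$ rather than negation, yet it still commutes with the block. Your fallback is essentially the paper's argument and covers this case.

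One correction to your bookkeeping: the lexicographic order with $b_0$ most significant does not make the blocks circulant. For $p=3$, $d=2$ it lists $1,4,7,2,5,8$. The first two rows of the stratum block of $\boldsymbol{B}$ are then $(\ast,9,9,1,1,1)$ and $(9,\ast,9,1,1,1)$, and the second is not a cyclic shift of the first.

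The order that does give circulant blocks is by ascending integer representative, i.e.\ with $b_{d-1}$ most significant. Under that order, reversal again sends $a\mapsto p^d-a$. So your identification of $\boldsymbol{J}$ with $\sigma$, and hence your conclusion, still stands.
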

\begin{proof}
Since $\boldsymbol{A}$ is a symmetric circulant matrix, it is necessarily centrosymmetric matrix, implying that $\boldsymbol{A}\boldsymbol{J}=\boldsymbol{J}\boldsymbol{A}$. Given that $\boldsymbol{A}\boldsymbol{u}+\rho e^{\boldsymbol{u}}=\boldsymbol{b}$, it follows that $\boldsymbol{J}\boldsymbol{A}\boldsymbol{u}+\rho \boldsymbol{J}e^{\boldsymbol{u}}=\boldsymbol{J}\boldsymbol{b}$. This can be rewritten as $\boldsymbol{A}\boldsymbol{J}\boldsymbol{u}+\rho e^{\boldsymbol{J}\boldsymbol{u}}=\boldsymbol{J}\boldsymbol{b}$, therefore, $\boldsymbol{J}\boldsymbol{u}$ is a solution of $\boldsymbol{A}\boldsymbol{v}+\rho e^{\boldsymbol{v}}=\boldsymbol{J}\boldsymbol{b}$.
\end{proof}

Let $\boldsymbol{u}=\begin{pmatrix}  \boldsymbol{u}_1\\ \boldsymbol{u}_2\\ \vdots \\ \boldsymbol{u}_m\end{pmatrix}$ where $\boldsymbol{u}_i\in\mathbb{R}^{(p-1)p^{d-1}}$, $\boldsymbol{\sigma}=\begin{pmatrix}  &&&1 \\ 1&&& \\ &\ddots&& \\ &&1&\end{pmatrix}\in \mathbb{R}^{(p-1)p^{d-1}\times (p-1)p^{d-1}}$ be the shift matrix on blocks of size $(p-1)p^{d-1}$, and define $\sigma_k(\boldsymbol{u})=\begin{pmatrix}  \boldsymbol{u}_1 \\  \vdots \\\boldsymbol{\sigma}\boldsymbol{u}_k\\ \vdots \\ \boldsymbol{u}_m\end{pmatrix}$, then $\boldsymbol{\sigma}\boldsymbol{A}_{ij}=\boldsymbol{A}_{ij}\boldsymbol{\sigma}$ as $\boldsymbol{A}_{ij}$ is circulant. Furthermore, when $i\neq j$, then $\boldsymbol{\sigma}\boldsymbol{A}_{ij}=\boldsymbol{A}_{ij}\boldsymbol{\sigma}=\boldsymbol{A}_{ij}$ as $\boldsymbol{B}_{ij}=p^{-|i-j|}\boldsymbol{1}\boldsymbol{1}^\top$ when $i\neq j$. Therefore, we have
\begin{align}
\label{eq:sigma_commute_Aii}
    \boldsymbol{\sigma}\boldsymbol{A}_{ii}=\boldsymbol{A}_{ii}\boldsymbol{\sigma}
\end{align}
and
\begin{align}
\label{eq:sigma_commute_Aij}
\boldsymbol{\sigma}\boldsymbol{A}_{ij}=\boldsymbol{A}_{ij}\boldsymbol{\sigma}=\boldsymbol{A}_{ij}
\end{align}
when $i\neq j$.

\begin{remark}
$\boldsymbol{A}$ is symmetric$\iff\boldsymbol{A}=\boldsymbol{A}^\top$;
$\boldsymbol{A}$ is persymmetric$\iff\boldsymbol{A}=\boldsymbol{J}\boldsymbol{A}^\top\boldsymbol{J}$;
$\boldsymbol{A}$ is centrosymmetric$\iff\boldsymbol{A}=\boldsymbol{J}\boldsymbol{A}\boldsymbol{J}$. And $\boldsymbol{A}$ is called bisymmetric if it is both symmetric and persymmetric. Among the three, knowing two allows one to deduce the third. 
\end{remark}

\begin{lemma}
\label{lem:cyclic_symmetric}
If $\boldsymbol{u}$ solves $\boldsymbol{A}\boldsymbol{u}+\rho e^{\boldsymbol{u}}=\boldsymbol{b}$, then $\sigma_k(\boldsymbol{u})$ solves $\boldsymbol{A}\boldsymbol{v}+\rho e^{\boldsymbol{v}}=\sigma_k(\boldsymbol{b})$.
\end{lemma}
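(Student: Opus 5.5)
The plan is to mimic the proof of Lemma \ref{lem:central_symmetric}, replacing $\boldsymbol{J}$ by the block-diagonal permutation matrix
\begin{align*}
    \boldsymbol{S}_k=\mathrm{diag}(\boldsymbol{I},\dots,\boldsymbol{I},\boldsymbol{\sigma},\boldsymbol{I},\dots,\boldsymbol{I}),
\end{align*}
where $\boldsymbol{\sigma}$ sits in the $k$-th diagonal block. By definition $\sigma_k(\boldsymbol{u})=\boldsymbol{S}_k\boldsymbol{u}$. The key claim is that $\boldsymbol{S}_k\boldsymbol{A}=\boldsymbol{A}\boldsymbol{S}_k$. Granting this, I will apply $\boldsymbol{S}_k$ to $\boldsymbol{A}\boldsymbol{u}+\rho e^{\boldsymbol{u}}=\boldsymbol{b}$. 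Since $\boldsymbol{S}_k$ is a permutation matrix, it commutes with the componentwise exponential, $\boldsymbol{S}_ke^{\boldsymbol{u}}=e^{\boldsymbol{S}_k\boldsymbol{u}}$. This gives
\begin{align*}
    \boldsymbol{A}\boldsymbol{S}_k\boldsymbol{u}+\rho e^{\boldsymbol{S}_k\boldsymbol{u}}=\boldsymbol{S}_k\boldsymbol{b}=\sigma_k(\boldsymbol{b}),
\end{align*}
which is the assertion.

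To verify the commutation, I compare the $(i,j)$ blocks of both sides. The $(i,j)$ block of $\boldsymbol{S}_k\boldsymbol{A}$ is $\boldsymbol{\sigma}^{\delta_{ik}}\boldsymbol{A}_{ij}$, and that of $\boldsymbol{A}\boldsymbol{S}_k$ is $\boldsymbol{A}_{ij}\boldsymbol{\sigma}^{\delta_{jk}}$. There are four cases.
\begin{itemize}
    \item If $i\neq k$ and $j\neq k$, both blocks equal $\boldsymbol{A}_{ij}$.
    \item If $i=j=k$, we need $\boldsymbol{\sigma}\boldsymbol{A}_{kk}=\boldsymbol{A}_{kk}\boldsymbol{\sigma}$, which is \eqref{eq:sigma_commute_Aii}.
    \item If $i=k\neq j$, we need $\boldsymbol{\sigma}\boldsymbol{A}_{kj}=\boldsymbol{A}_{kj}$.
    \item If $j=k\neq i$, we need $\boldsymbol{A}_{ik}=\boldsymbol{A}_{ik}\boldsymbol{\sigma}$.
\end{itemize}
The last two cases follow from \eqref{eq:sigma_commute_Aij}, because $\boldsymbol{A}_{ij}$ is a multiple of $\boldsymbol{1}\boldsymbol{1}^\top$ for $i\neq j$ and $\boldsymbol{\sigma}\boldsymbol{1}=\boldsymbol{1}$, $\boldsymbol{1}^\top\boldsymbol{\sigma}=\boldsymbol{1}^\top$.

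The only point needing care is the circulant structure behind \eqref{eq:sigma_commute_Aii}. This requires ordering the points of each stratum $p^l\mathbb{Z}_p^*/(1+p^d\mathbb{Z}_p)$ so that $\boldsymbol{A}_{ll}$ is genuinely circulant. The paper asserts this earlier, so I will take it as given. If it needed justification, I would order the points via a cyclic structure compatible with the weights $|z|_p|x|_p/|z-x|_p^2$. These weights depend only on the first index where the digits differ, and this dependence is what makes a shift-invariant ordering possible.
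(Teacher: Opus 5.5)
Your proposal is correct and is essentially the paper's proof. The paper multiplies the $k$-th block row of the system by $\boldsymbol{\sigma}$ and uses \eqref{eq:sigma_commute_Aii} and \eqref{eq:sigma_commute_Aij} to move $\boldsymbol{\sigma}$ onto $\boldsymbol{u}_k$ in every block row, which is exactly your four-case check of $\boldsymbol{S}_k\boldsymbol{A}=\boldsymbol{A}\boldsymbol{S}_k$ without the single commutation identity, and it likewise takes the circulant structure of the diagonal blocks as given.
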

\begin{proof}
As $\boldsymbol{A}\boldsymbol{u}+\rho e^{\boldsymbol{u}}=\boldsymbol{b}$, so we have
\begin{align*}
\begin{cases}
\displaystyle\sum_j\boldsymbol{A}_{1j}\boldsymbol{u}_j+\rho e^{\boldsymbol{u}_1}=\boldsymbol{b}_1,\\
\qquad\qquad\quad\vdots\\
\displaystyle\sum_j\boldsymbol{A}_{kj}\boldsymbol{u}_j+\rho e^{\boldsymbol{u}_k}=\boldsymbol{b}_k,\\
\qquad\qquad\quad\vdots\\
\displaystyle\sum_j\boldsymbol{A}_{mj}\boldsymbol{u}_j+\rho e^{\boldsymbol{u}_m}=\boldsymbol{b}_m.
\end{cases}
\end{align*}
Multiplying the $k$-th block by $\boldsymbol{\sigma}$ gives
\begin{align*}
\begin{cases}
\displaystyle\sum_j\boldsymbol{A}_{1j}\boldsymbol{u}_j+\rho e^{\boldsymbol{u}_1}=\boldsymbol{b}_1,\\
\qquad\qquad\quad\vdots\\
\displaystyle\sum_j\boldsymbol{\sigma}\boldsymbol{A}_{kj}\boldsymbol{u}_j+\rho \boldsymbol{\sigma}e^{\boldsymbol{u}_k}=\boldsymbol{\sigma}\boldsymbol{b}_k,\\
\qquad\qquad\quad\vdots\\
\displaystyle\sum_j\boldsymbol{A}_{mj}\boldsymbol{u}_j+\rho e^{\boldsymbol{u}_m}=\boldsymbol{b}_m,
\end{cases}
\end{align*}
which is equivalent to
\begin{align*}
\begin{cases}
\displaystyle\sum_{j\neq k}\boldsymbol{A}_{1j}\boldsymbol{u}_j+\boldsymbol{A}_{1k}\boldsymbol{\sigma}\boldsymbol{u}_k+\rho e^{\boldsymbol{u}_1}=\boldsymbol{b}_1,\\
\qquad\qquad\qquad\qquad\vdots\\
\displaystyle\sum_{j\neq k}\boldsymbol{A}_{kj}\boldsymbol{u}_j+\boldsymbol{A}_{kk}\boldsymbol{\sigma}\boldsymbol{u}_k+\rho e^{\boldsymbol{\sigma}\boldsymbol{u}_k}=\boldsymbol{\sigma}\boldsymbol{b}_k,\\
\qquad\qquad\qquad\qquad\vdots\\
\displaystyle\sum_{j\neq k}\boldsymbol{A}_{mj}\boldsymbol{u}_j+\boldsymbol{A}_{mk}\boldsymbol{\sigma}\boldsymbol{u}_k+\rho e^{\boldsymbol{u}_m}=\boldsymbol{b}_m.
\end{cases}
\end{align*}
By \eqref{eq:sigma_commute_Aii} and \eqref{eq:sigma_commute_Aij}. Thus $\sigma_k(\boldsymbol{u})$ is a solution of $\boldsymbol{A}\boldsymbol{v}+\rho e^{\boldsymbol{v}}=\sigma_k(\boldsymbol{b})$.
\end{proof}

Lemma \ref{lem:central_symmetric} and Lemma \ref{lem:cyclic_symmetric} imply certain symmetric properties: since $\boldsymbol{b}=p^d\boldsymbol{e}_y$, so given one solution, we can construct many solutions by folding or rotating the solution unless the solution is radially symmetric. And by the following theorem we proved that there only exist radially symmetric solutions when $\rho$ is small.

\begin{theorem}
\label{thm:radially_symmetric_small_rho}
When $\rho\leq p^{-d}-p^{-d-m}$, denote $u_y(x)$ be the solution of \eqref{eq:matrix_equation}, then $u_y(x)=u(\text{d}(x,y),|x|_p,|y|_p)$, which means that the solution is radially symmetric. Moreover, if $\text{d}(x_1,y_1)=\text{d}(x_2,y_2)$ and $v_p(x_1)+v_p(x_2)=v_p(y_1)+v_p(y_2)=m-1$, then $u_{y_1}(x_1)=u_{y_2}(x_2)$.
\end{theorem}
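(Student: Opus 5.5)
The plan is to prove first that \eqref{eq:matrix_equation} has exactly one solution for the given $\rho$. The symmetry statements then follow by transporting the solution along weight-preserving permutations of the finite graph that fix $y$, in the spirit of Lemma \ref{lem:central_symmetric} and Lemma \ref{lem:cyclic_symmetric}.

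\emph{Uniqueness.} Let $\boldsymbol{u},\boldsymbol{v}$ be two solutions and set $\boldsymbol{w}=\boldsymbol{u}-\boldsymbol{v}$. Subtracting the two equations gives $\boldsymbol{A}\boldsymbol{w}+\rho(e^{\boldsymbol{u}}-e^{\boldsymbol{v}})=0$. By the mean value theorem, $e^{u_i}-e^{v_i}=c_iw_i$ with $c_i=e^{\xi_i}>0$, so
\begin{align*}
(-\boldsymbol{A}+\rho\,\text{diag}(\boldsymbol{c}))\boldsymbol{w}=0.
\end{align*}
Since $\boldsymbol{A}$ is symmetric with zero row sums and nonnegative off-diagonal entries, $\boldsymbol{w}^\top(-\boldsymbol{A})\boldsymbol{w}=\tfrac12\sum_{i,j}a_{ij}(w_i-w_j)^2\geq0$. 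Pairing the identity with $\boldsymbol{w}$ therefore yields $0\geq\rho\sum_ic_iw_i^2$, which forces $\boldsymbol{w}=0$. Existence is already known from \cite{huang2020existence}.

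This argument appears to need only $\rho>0$. The hypothesis $\rho\leq p^{-d}-p^{-d-m}$ would matter only if one instead proves uniqueness by a contraction argument, for instance writing $\boldsymbol{u}=(\boldsymbol{I}-\boldsymbol{A}/\kappa)\boldsymbol{u}+\dots$ and using the bound $e^{u_i}\leq p^d$ from Lemma \ref{lem:rough_upperbound} to make the map $\boldsymbol{u}\mapsto$ (right-hand side) a sup-norm contraction. I would keep the energy argument as the main route and fall back on the contraction only if a gap appears.

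\emph{Radial symmetry.} Let $\pi$ be a bijection of the point set that fixes $y$ and preserves every weight $\frac{|z|_p|x|_p}{|z-x|_p^2}$. Then $\boldsymbol{u}\circ\pi$ solves the same equation, exactly as in Lemma \ref{lem:cyclic_symmetric}, so uniqueness gives $\boldsymbol{u}\circ\pi=\boldsymbol{u}$. It then suffices to show that such maps act transitively on each set $\{x:\ d(x,y)=r,\ |x|_p=s\}$.

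Write $l=v_p(y)$. Points in a stratum $k\neq l$ all have $d(x,y)=1$, and their weights to points of other strata are the constants $p^{-|i-j|}$. Hence any weight-preserving bijection of the ultrametric ball structure of $p^k\mathbb{Z}_p^*/(1+p^d\mathbb{Z}_p)$, extended by the identity elsewhere, is admissible. Permutations of the digits in a given position that respect the tree structure (automorphisms of the rooted $p$-ary tree, with the leading digit running over $p-1$ values) act transitively on that stratum.

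Inside the stratum of $y$, I would use the automorphisms of that same rooted tree that fix the leaf $y$. The tree levels correspond to $v_p(x-y)$, so these maps preserve all intra-stratum weights, and they act transitively on each sphere $\{x:\ v_p(x-y)=l+M\}$. Together this gives $u_y(x)=u(d(x,y),|x|_p,|y|_p)$. The step I expect to require the most care is checking that these tree automorphisms really preserve $|z-x|_p$ for all pairs, and not only distances measured from $y$; this holds because ultrametric distance is determined by the length of the common digit prefix.

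\emph{Reflection statement.} Consider the map $x=p^l\beta\mapsto p^{m-1-l}\beta$. It sends the block for stratum $l$ to the block for stratum $m-1-l$, keeps the within-stratum structure (all within-stratum weights reduce to $|\beta-\beta'|_p^{-2}$), and preserves the cross weights because $|i-j|$ is invariant under $i\mapsto m-1-i$. Consequently $u_{y}(x)$ and $u_{y'}(x')$ coincide for reflected pairs $(x,y)\mapsto(x',y')$ by uniqueness, as in Lemma \ref{lem:central_symmetric}. Since reflection preserves $d$, combining it with the radial symmetry just proved gives $u_{y_1}(x_1)=u_{y_2}(x_2)$ whenever $d(x_1,y_1)=d(x_2,y_2)$ and $v_p(x_1)+v_p(x_2)=v_p(y_1)+v_p(y_2)=m-1$.
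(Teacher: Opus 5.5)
The uniqueness step, on which the rest of your proposal depends, contains a sign error. Subtracting the two equations gives $(\boldsymbol{A}+\rho\,\text{diag}(\boldsymbol{c}))\boldsymbol{w}=0$, i.e.\ $(-\boldsymbol{A})\boldsymbol{w}=\rho\,\text{diag}(\boldsymbol{c})\boldsymbol{w}$. It does not give $(-\boldsymbol{A}+\rho\,\text{diag}(\boldsymbol{c}))\boldsymbol{w}=0$. Pairing with $\boldsymbol{w}$ gives $\boldsymbol{w}^\top(-\boldsymbol{A})\boldsymbol{w}=\rho\sum_ic_iw_i^2$, with both sides nonnegative, so nothing is forced. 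The term $+\rho e^{u}$ has the focusing sign relative to the negative semidefinite $\boldsymbol{A}$, and ruling out $\boldsymbol{w}\neq0$ needs a genuine smallness condition on $\rho\boldsymbol{c}$ relative to the nonzero spectrum of $-\boldsymbol{A}$.

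In particular it is false that only $\rho>0$ is needed: Table~\ref{tab:nmon_and_nrad_solution} exhibits distinct solutions, one of them non-radial, for $p=3$, $m=1$, $d=2$, $\rho=5$. The contraction fallback is not carried out, and it fails in its obvious forms for the same reason. For example, $\boldsymbol{u}\mapsto(k\boldsymbol{I}-\boldsymbol{A})^{-1}(k\boldsymbol{u}+\rho e^{\boldsymbol{u}}-\rho p^d\boldsymbol{e}_y)$ has sup-norm Lipschitz constant larger than $1$, because $(k\boldsymbol{I}-\boldsymbol{A})^{-1}$ is entrywise nonnegative with row sums $1/k$.

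A corrected global argument would be spectral. By Weyl's inequality, $\boldsymbol{A}+\rho\,\text{diag}(\boldsymbol{c})$ is nonsingular when $\rho\max_ic_i<\theta_{p,m}$. With only $c_i<p^d$ from Lemma~\ref{lem:rough_upperbound}, this yields uniqueness for $\rho\leq\theta_{p,m}p^{-d}$. That covers the stated range when $m=1$, where $\theta_{p,1}=1-p^{-1}$, but not when $m\geq2$. There the test vector in Lemma~\ref{lem:smallest_non-zero_eigenvalue_-A} gives $\theta_{p,m}\leq p^{-1}+(p-2)p^{-m}<1-p^{-m}$. The sharper bounds on $u$ that the paper later feeds into this spectral argument are themselves derived from the present theorem, so they are not available here.

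The missing idea is that global uniqueness is not needed: the paper shows directly, by a local argument, that every solution is symmetric. Take two sibling points $x,x'$: same stratum $l$, same digits except $b_{d-1}$, neither equal to $y$. After adding $p^{2(d-1)}u(x)$ to the neighbour sum in $p^dDu(x)$, the weighted sums seen from $x$ and from $x'$ coincide. Hence $h(t)=\rho p^de^t-K_{d-2}t$ takes the same value at $u(x)$ and $u(x')$, where $K_s=p^{d+1+s}+p^{d+s}-(p^{-l}+p^{l+1-m})p^{d-1}$. Both values lie in $(-\infty,d\ln p]$ by Lemma~\ref{lem:rough_upperbound}. On that interval $h$ is strictly decreasing as soon as $\rho p^{2d}\leq K_{d-2}$, so $u(x)=u(x')$.

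One then collapses the last digit and repeats upward, treating separately the siblings of the path to $y$ in $y$'s stratum. Removing digit $b_{s+1}$ needs $\rho p^{2d}\leq K_s$. The minimum $\min_{-1\leq s\leq d-2,\,0\leq l\leq m-1}K_s=p^d-p^{d-m}$ is exactly where the hypothesis $\rho\leq p^{-d}-p^{-d-m}$ comes from.

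In your framework, this amounts to proving uniqueness only for $\boldsymbol{w}=\boldsymbol{u}-\boldsymbol{u}\circ\pi$, with $\pi$ swapping two sibling subtrees. Once $\boldsymbol{u}$ is known to be constant on each of them, $\boldsymbol{w}$ equals $\pm w$ on the two subtrees and $0$ elsewhere. The system then reduces to $(\rho c-p^{-d}K_s)w=0$ with $c<p^d$, which forces $w=0$. With invariance under these swaps established this way, your transport argument using tree automorphisms and the reflection $p^l\beta\mapsto p^{m-1-l}\beta$ does finish the proof.
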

\begin{proof}
From \eqref{eq:compute_Du}, we compute
\begin{equation}
\begin{aligned}
\label{eq:compute_p^dDu}
p^dDu(x)=&\sum_{z\neq x}\frac{|z|_{p}|x|_{p}}{|z-x|_{p}^2}(u(z)-u(x))\\
=&\sum_{z\neq x}\frac{|z|_{p}|x|_{p}}{|z-x|_{p}^2}u(z)-\sum_{z\neq x}\frac{|z|_{p}|x|_{p}}{|z-x|_{p}^2}u(x)\\
=&\sum_{\substack{z\\0\leq v_p(z)\leq v_p(x)-1}}\frac{|x|_p}{|z|_p}u(z)+\sum_{\substack{z\\v_p(x)+1\leq v_p(z)\leq m-1}}\frac{|z|_p}{|x|_p}u(z)+\sum_{j=0}^{d-1}\sum_{\substack{a_i=b_i\\0\leq i\leq j-1\\a_{j}\neq b_{j}\\a_{j+1}\dots a_{d-1}}}p^{2j}u(z)\\
&-\left(\sum_{\substack{z\\0\leq v_p(z)\leq v_p(x)-1}}\frac{|x|_p}{|z|_p}+\sum_{\substack{z\\v_p(x)+1\leq v_p(z)\leq m-1}}\frac{|z|_p}{|x|_p}+\sum_{j=0}^{d-1}\sum_{\substack{a_i=b_i\\0\leq i\leq j-1\\a_{j}\neq b_{j}\\a_{j+1}\dots a_{d-1}}}p^{2j}\right)u(x).
\end{aligned}
\end{equation}

Since $a_0\neq b_0$ correspond to $(p-2)p^{d-1}$ cases and $a_i=b_i,\ i=0,1,\dots,j-1,\ a_j\neq b_j$ has $(p-1)p^{d-1-j}$ cases for $1\leq j\leq d-1$, so the coefficient of $u(x)$ is
\begin{equation}
\begin{aligned}
\label{eq:coefficient_of_u(x)}
&\sum_{\substack{z\\0\leq v_p(z)\leq v_p(x)-1}}\frac{|x|_p}{|z|_p}+\sum_{\substack{z\\v_p(x)+1\leq v_p(z)\leq m-1}}\frac{|z|_p}{|x|_p}+\sum_{j=0}^{d-1}\sum_{\substack{a_i=b_i\\0\leq i\leq j-1\\a_{j}\neq b_{j}\\a_{j+1}\dots a_{d-1}}}p^{2j}\\
=&(p-1)p^{d-1}\sum_{v_p(z)\neq v_p(x)}p^{-|v_p(z)-v_p(x)|}+(p-2)p^{d-1}+(p-1)p^{d-1-j}\sum_{j=1}^{d-1}p^{2j}\\
=&(p-1)p^{d-1}\sum_{v_p(z)\neq v_p(x)}p^{-|v_p(z)-v_p(x)|}+(p-2)p^{d-1}+(p-1)p^{d}\sum_{j=0}^{d-2}p^{j}\\
=&(p-1)p^{d-1}\frac{2-|x|_p-p^{1-m}|x|_p^{-1}}{p-1}+(p-2)p^{d-1}+p^{d}(p^{d-1}-1)\\
=&(2-|x|_p-p^{1-m}|x|_p^{-1})p^{d-1}+p^{2d-1}-2p^{d-1}\\
=&p^{2d-1}-(|x|_p+p^{1-m}|x|_p^{-1})p^{d-1},
\end{aligned}
\end{equation}
which coincides with the negative of the diagonal entries of $\boldsymbol{B}$. 

Note that
\begin{align*}
&\sum_{j=0}^{d-1}\sum_{\substack{a_i=b_i\\0\leq i\leq j-1\\a_{j}\neq b_{j}\\a_{j+1}\dots a_{d-1}}}p^{2j}u(z)+p^{2(d-1)}u(x)\\
=&\sum_{\substack{a_{0}\neq b_{0}\\a_{1}\dots a_{d-1}}}u(z)+\sum_{\substack{a_0=b_0\\a_{1}\neq b_{1}\\a_{2}\dots a_{d-1}}}p^{2}u(z)+\dots+\sum_{\substack{a_0=b_0\\\vdots\\a_{d-2}=b_{d-2}\\a_{d-1}}}p^{2(d-1)}u(z)\\
=&\sum_{\substack{z\\v_p(z)=v_p(x)}}u(z)+\sum_{\substack{a_0=b_0\\a_{1}\neq b_{1}\\a_{2}\dots a_{d-1}}}(p^{2}-1)u(z)+\dots+\sum_{\substack{a_0=b_0\\\vdots\\a_{d-2}=b_{d-2}\\a_{d-1}}}(p^{2(d-1)}-1)u(z)\\
=&\sum_{\substack{z\\v_p(z)=v_p(x)}}u(z)+\sum_{\substack{a_0=b_0\\a_{1}\dots a_{d-1}}}(p^{2}-1)u(z)+\sum_{\substack{a_0=b_0\\a_1=b_1\\a_{2}\neq b_{2}\\a_{3}\dots a_{d-1}}}(p^{4}-p^2)u(z)+\dots+\sum_{\substack{a_0=b_0\\\vdots\\a_{d-2}=b_{d-2}\\a_{d-1}}}(p^{2(d-1)}-p^2)u(z)\\
=&\cdots\\
=&\sum_{\substack{z\\v_p(z)=v_p(x)}}u(z)+\sum_{\substack{a_0=b_0\\a_{1}\dots a_{d-1}}}(p^{2}-1)u(z)+\dots+\sum_{\substack{a_0=b_0\\\vdots\\a_{d-2}=b_{d-2}\\a_{d-1}}}(p^{2(d-1)}-p^{2(d-2)})u(z)\\
=&\sum_{\substack{z\\v_p(z)=v_p(x)}}u(z)+\sum_{j=0}^{d-2}\sum_{\substack{a_i=b_i\\0\leq i\leq j\\a_{j+1}\dots a_{d-1}}}(p^{2(j+1)}-p^{2j})u(z),
\end{align*}
so we have
\begin{align*}
p^dDu(x)=&\sum_{\substack{z\\0\leq v_p(z)\leq v_p(x)-1}}\frac{|x|_p}{|z|_p}u(z)+\sum_{\substack{z\\v_p(x)+1\leq v_p(z)\leq m-1}}\frac{|z|_p}{|x|_p}u(z)+\sum_{\substack{z\\v_p(z)=v_p(x)}}u(z)\\
&+\sum_{j=0}^{d-2}\sum_{\substack{a_i=b_i\\0\leq i\leq j\\a_{j+1}\dots a_{d-1}}}(p^{2(j+1)}-p^{2j})u(z)-(p^{2d-1}+p^{2d-2}-(|x|_p+p^{1-m}|x|_p^{-1})p^{d-1})u(x),
\end{align*}
then when $x\neq y$ the equation \eqref{eq:mean_field_equation_finite_quotient} becomes
\begin{equation}
\begin{aligned}
\label{eq:equation_for_x_neq_y}
&\rho p^de^{u(x)}+\sum_{\substack{z\\0\leq v_p(z)\leq v_p(x)-1}}\frac{|x|_p}{|z|_p}u(z)+\sum_{\substack{z\\v_p(x)+1\leq v_p(z)\leq m-1}}\frac{|z|_p}{|x|_p}u(z)+\sum_{\substack{z\\v_p(z)=v_p(x)}}u(z)\\
&+\sum_{j=0}^{d-2}\sum_{\substack{a_i=b_i\\0\leq i\leq j\\a_{j+1}\dots a_{d-1}}}(p^{2(j+1)}-p^{2j})u(z)-(p^{2d-1}+p^{2d-2}-(|x|_p+p^{1-m}|x|_p^{-1})p^{d-1})u(x)=0.
\end{aligned}
\end{equation}
Denoting $u(x)=u_{l}(b_0,b_1,\dots,b_{d-1})$, by fixing $b_{0},b_{1},\dots,b_{d-2}$ and varying $b_{d-1}$, we see that $u(x)$ satisfies \eqref{eq:equation_for_x_neq_y}. In addition, the function $f(x)=\rho p^de^x-kx$ for $\ x\leq d\ln p$ is monotonically increasing when $p^{2d}\rho\leq k$. Then by Lemma \ref{lem:rough_upperbound}, if $p^{2d}\rho\leq p^{2d-1}+p^{2d-2}-(p^{-l}+p^{l+1-m})p^{d-1}$, $u_{l}(b_0,b_1,\dots,b_{d-1})$ is constant for all $b_{d-1}$ except at $x=y$. Hence for $x\neq y$, we denote $u(x)$ by $u_{l}(b_0,b_1,\dots,b_{d-2})$.

When $v_p(x)=l\neq v_p(y)$, \eqref{eq:equation_for_x_neq_y} becomes
\begin{align*}
&\sum_{\substack{z\\0\leq k\leq l-1}}p^{k-l}u(z)+\sum_{\substack{z\\l+1\leq k\leq m-1}}p^{l-k}u(z)+\sum_{\substack{z\\v_p(z)=l}}u(z)\\
&+\sum_{j=0}^{d-3}\sum_{\substack{a_i=b_i\\0\leq i\leq j\\a_{j+1}\dots a_{d-1}}}(p^{2(j+1)}-p^{2j})u_{l}(a_0,a_1,\dots,a_{d-2})+p(p^{2(d-1)}-p^{2(d-2)})u_{l}(b_0,b_1,\dots,b_{d-2})\\
&+\rho p^{d} e^{u_{l}(b_0,b_1,\dots,b_{d-2})}-(p^{2d-1}+p^{2d-2}-(p^{-l}+p^{l+1-m})p^{d-1})u_{l}(b_0,b_1,\dots,b_{d-2})=0.
\end{align*}
This simplifies to
\begin{equation}
\begin{aligned}
\label{eq:equation_for_vpx_neq_vpy_d-2}
&\sum_{\substack{z\\0\leq k\leq l-1}}p^{k-l}u(z)+\sum_{\substack{z\\l+1\leq k\leq m-1}}p^{l-k}u(z)+\sum_{\substack{z\\v_p(z)=l}}u(z)+\sum_{j=0}^{d-3}\sum_{\substack{a_i=b_i\\0\leq i\leq j\\a_{j+1}\dots a_{d-1}}}(p^{2(j+1)}-p^{2j})u_{l}(a_0,a_1,\dots,a_{d-2})\\
&+\rho p^{d}e^{u_{l}(b_0,b_1,\dots,b_{d-2})}-(p^{2d-2}+p^{2d-3}-(p^{-l}+p^{l+1-m})p^{d-1})u_{l}(b_0,b_1,\dots,b_{d-2})=0.
\end{aligned}
\end{equation}
Fixing $b_{0},b_{1},\dots,b_{d-3}$ and varying $b_{d-2}$, we see that $u(x)$ satisfies \eqref{eq:equation_for_vpx_neq_vpy_d-2}. So when $\rho p^{2d}\leq p^{2d-2}+p^{2d-3}-(p^{-l}+p^{l+1-m})p^{d-1}$, $u_{l}(b_0,b_1,\dots,b_{d-2})$ is constant for all $b_{d-2}$, denote it by $u_{l}(b_0,b_1,\dots,b_{d-3})$. Similarly, if we fix $b_{0},b_{1},\dots,b_{d-4}$ and when $\rho p^{2d}\leq p^{2d-2}+p^{2d-3}-(p^{-l}+p^{l+1-m})p^{d-1}$, $u_{l}(b_0,b_1,\dots,b_{d-3})$ is constant for all $b_{d-3}$. Proceeding recursively, we obtain
\begin{align*}
&\sum_{\substack{z\\0\leq k\leq l-1}}p^{k-l}u(z)+\sum_{\substack{z\\l+1\leq k\leq m-1}}p^{l-k}u(z)+\sum_{\substack{z\\v_p(z)=l}}u(z)+\sum_{j=0}^{s}\sum_{\substack{a_i=b_i\\0\leq i\leq j\\a_{j+1}\dots a_{d-1}}}(p^{2(j+1)}-p^{2j})u_{l}(a_0,\dots,a_{s},a_{s+1})\\
&+\rho p^{d}e^{u_{l}(b_0,\dots,b_{s},b_{s+1})}-(p^{d+1+s}+p^{2d+s}-(p^{-l}+p^{l+1-m})p^{d-1})u_{l}(b_0,\dots,b_{s},b_{s+1})=0,
\end{align*}
$\forall s=0,1,\dots,d-2$. So when $\rho p^{2d}\leq p^{d+1+s}+p^{d+s}-(p^{-l}+p^{l+1-m})p^{d-1}$, $u_{l}$ dependent only on $b_0,b_1,\dots,b_{s}$, $\forall s=0,1,\dots,d-2$. In conclusion, when $v_p(x)=l\neq v_p(y)$, if $\rho p^{2d}\leq p^{d}+p^{d-1}-(p^{-l}+p^{l+1-m})p^{d-1}$, then $u_{l}$ is constant.

When $v_p(x)=l=v_p(y)$, denote $y=p^l(c_0+c_1p+\dots+c_{d-1}p^{d-1})$, since $u_{l}(b_0,b_1,\dots,b_{d-1})$ is constant for all $b_{d-1}$ except at $x=y$, fixing $b_i=c_i$, $0\leq i\leq d-2$, $b_{d-1}\neq c_{d-1}$, then $u_l(c_0,\dots,c_{d-2},b_{d-1})$ is independent of $b_{d-1}$, we denote this invariant value by $u_l(c_0,\dots,c_{d-2})$.

If we fix $b_i=c_i$, $0\leq i\leq s$, $b_{s+1}\neq c_{s+1}$ and suppose $u_l(c_0,\dots,c_{s},b_{s+1},\dots,b_{d-1})$ is independent of $b_{s+2},\dots,b_{d-1}$, denote it by $u_l(c_0,\dots,c_{s},b_{s+1})$. Next, we prove that $u_l(c_0,\dots,c_{s},b_{s+1})$ is independent of $b_{s+1}$: From \eqref{eq:compute_p^dDu} and \eqref{eq:coefficient_of_u(x)}, we can get
\begin{equation}
\begin{aligned}
\label{eq:compute_p^dDu_index}
p^{d}Du(x)=&\sum_{\substack{z\\0\leq k\leq l-1}}p^{k-l}u(z)+\sum_{\substack{z\\l+1\leq k\leq m-1}}p^{l-k}u(z)+\sum_{j=0}^{d-1}\sum_{\substack{a_i=b_i\\0\leq i\leq j-1\\a_{j}\neq b_{j}\\a_{j+1}\dots a_{d-1}}}p^{2j}u(z)\\
&-(p^{2d-1}-(p^{-l}+p^{l+1-m})p^{d-1})u_l(c_0,\dots,c_{s},b_{s+1})\\
=&\sum_{\substack{z\\0\leq k\leq l-1}}p^{k-l}u(z)+\sum_{\substack{z\\l+1\leq k\leq m-1}}p^{l-k}u(z)+\sum_{j=0}^{s}\sum_{\substack{a_i=c_i\\0\leq i\leq j-1\\a_{j}\neq c_{j}\\a_{j+1}\dots a_{d-1}}}p^{2j}u(z)+\sum_{\substack{a_i=c_i\\0\leq i\leq s\\a_{s+1}\neq b_{s+1}\\a_{s+2}\dots a_{d-1}}}p^{2(s+1)}u(z)\\
&+\sum_{j=s+2}^{d-1}\sum_{\substack{a_i=b_i\\0\leq i\leq j-1\\a_{j}\neq b_{j}\\a_{j+1}\dots a_{d-1}}}p^{2j}u(z)-(p^{2d-1}-(p^{-l}+p^{l+1-m})p^{d-1})u_l(c_0,\dots,c_{s},b_{s+1}).
\end{aligned}
\end{equation}
We first compute
\begin{align*}
&\sum_{\substack{a_i=c_i\\0\leq i\leq s\\a_{s+1}\neq b_{s+1}\\a_{s+2}\dots a_{d-1}}}p^{2(s+1)}u(z)\\
=&p^{2(s+1)}\Biggl(p^{d-s-2}\sum_{\substack{a_{s+1}\neq b_{s+1}\\a_{s+1}\neq c_{s+1}}}u_l(c_0,\dots,c_{s},a_{s+1})\\
&+(p-1)p^{d-s-3}u_l(c_0,\dots,c_{s+1})+\dots+(p-1)u_l(c_0,\dots,c_{d-2})+u(y)\Biggr)\\
=&p^{d+s}\sum_{\substack{a_{s+1}\neq b_{s+1}\\a_{s+1}\neq c_{s+1}}}u_l(c_0,\dots,c_{s},a_{s+1})+\sum_{j=s+1}^{d-2}(p-1)p^{d+2s-j}u_l(c_0,\dots,c_{j})+p^{2(s+1)}u(y)
\end{align*}
and
\begin{align*}
\sum_{j=s+2}^{d-1}\sum_{\substack{a_i=b_i\\0\leq i\leq j-1\\a_{j}\neq b_{j}\\a_{j+1}\dots a_{d-1}}}p^{2j}u(z)=&\sum_{j=s+2}^{d-1}(p-1)p^{d-1-j}p^{2j}u_l(c_0,\dots,c_{s},b_{s+1})\\
=&(p^{2d-1}-p^{d+s+1})u_l(c_0,\dots,c_{s},b_{s+1}).
\end{align*}
Substituting into \eqref{eq:compute_p^dDu_index} yields
\begin{equation}
\begin{aligned}
\label{eq:compute_p^dDu_index_simplified}
&p^{d}Du(x)\\
=&\sum_{\substack{z\\0\leq k\leq l-1}}p^{k-l}u(z)+\sum_{\substack{z\\l+1\leq k\leq m-1}}p^{l-k}u(z)+\sum_{j=0}^{s}\sum_{\substack{a_i=c_i\\0\leq i\leq j-1\\a_{j}\neq c_{j}\\a_{j+1}\dots a_{d-1}}}p^{2j}u(z)\\
&+p^{d+s}\sum_{\substack{a_{s+1}\neq b_{s+1}\\a_{s+1}\neq c_{s+1}}}u_l(c_0,\dots,c_{s},a_{s+1})+\sum_{j=s+1}^{d-2}(p-1)p^{d+2s-j}u_l(c_0,\dots,c_{j})+p^{2(s+1)}u(y)\\
&-(p^{d+s+1}-(p^{-l}+p^{l+1-m})p^{d-1})u_l(c_0,\dots,c_{s},b_{s+1})\\
=&\sum_{\substack{z\\0\leq k\leq l-1}}p^{k-l}u(z)+\sum_{\substack{z\\l+1\leq k\leq m-1}}p^{l-k}u(z)+\sum_{j=0}^{s}\sum_{\substack{a_i=c_i\\0\leq i\leq j-1\\a_{j}\neq c_{j}\\a_{j+1}\dots a_{d-1}}}p^{2j}u(z)\\
&+p^{d+s}\sum_{\substack{a_{s+1}\neq c_{s+1}}}u_l(c_0,\dots,c_{s},a_{s+1})+\sum_{j=s+1}^{d-2}(p-1)p^{d+2s-j}u_l(c_0,\dots,c_{j})+p^{2(s+1)}u(y)\\
&-(p^{d+s+1}+p^{d+s}-(p^{-l}+p^{l+1-m})p^{d-1})u_l(c_0,\dots,c_{s},b_{s+1}).
\end{aligned}
\end{equation}
Then equation \eqref{eq:equation_for_x_neq_y} becomes
\begin{align*}
&\sum_{\substack{z\\0\leq k\leq l-1}}p^{k-l}u(z)+\sum_{\substack{z\\l+1\leq k\leq m-1}}p^{l-k}u(z)+\sum_{j=0}^{s}\sum_{\substack{a_i=c_i\\0\leq i\leq j-1\\a_{j}\neq c_{j}\\a_{j+1}\dots a_{d-1}}}p^{2j}u(z)\\
&+p^{d+s}\sum_{\substack{a_{s+1}\neq c_{s+1}}}u_l(c_0,\dots,c_{s},a_{s+1})+\sum_{j=s+1}^{d-2}(p-1)p^{d+2s-j}u_l(c_0,\dots,c_{j})+p^{2(s+1)}u(y)\\
&-(p^{d+s+1}+p^{d+s}-(p^{-l}+p^{l+1-m})p^{d-1})u_l(c_0,\dots,c_{s},b_{s+1})+\rho p^{d} e^{u_{l}(c_0,\dots,c_s,b_{s+1})}=0.
\end{align*}
So when $p^{2d}\rho\leq p^{d+1+s}+p^{d+s}-(p^{-l}+p^{l+1-m})p^{d-1}$, $u_{l}$ depend only on $c_0,c_1,\dots,c_{s}$, $\forall s=0,1,\dots,d-2$. Then we get the result by induction. In conclusion, when $v_p(x)=l=v_p(y)$, $p^{2d}\rho\leq p^{d+1+s}+p^{d+s}-(p^{-l}+p^{l+1-m})p^{d-1}$, $u_{l}(c_0,\dots,c_{s})$ is constant.
Then when
\begin{align*}
\rho&\leq p^{-2d}\min_{\substack{-1\leq s\leq d-2\\0\leq l\leq m-1}}\{p^{d+1+s}+p^{d+s}-(p^{-l}+p^{l+1-m})p^{d-1}\}\\
&=p^{-2d}(p^{d}+p^{d-1}-(1+p^{1-m})p^{d-1})\\
&=p^{-2d}(p^{d}-p^{d-m})\\
&=p^{-d}-p^{-d-m},
\end{align*}
$u(x)$ is radially symmetric. By Lemma \ref{lem:central_symmetric} and Lemma \ref{lem:cyclic_symmetric}, we can see that $u_y(x)=u(\text{d}(x,y),|x|_p,|y|_p)$, and if $\text{d}(x_1,y_1)=\text{d}(x_2,y_2)$ with $v_p(x_1)+v_p(x_2)=v_p(y_1)+v_p(y_2)=m-1$, then $u_{y_1}(x_1)=u_{y_2}(x_2)$.
\end{proof}

Numerical experiments reveal that these radially symmetric solutions exhibit strict monotonicity. $u(x)$ increases with the distance from the singular point $y$.

\begin{theorem}
    \label{thm:monoton_small_rho}
    Assume $\rho\leq p^{-d}-p^{-d-m}$ and $v_p(x)=l=v_p(y)$, then the radially symmetric solution satisfies $u_l(c_0,\dots,c_{d-1})<u_l(c_0,\dots,c_{d-2})<\dots<u_l(c_0)<u_l$
\end{theorem}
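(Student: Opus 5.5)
Under the hypothesis $\rho\le p^{-d}-p^{-d-m}$, Theorem~\ref{thm:radially_symmetric_small_rho} reduces the solution to finitely many values. Write $y=p^l(c_0+\dots+c_{d-1}p^{d-1})$ and set $w_{-1}=u_l$ (the value on points of stratum $l$ with $b_0\neq c_0$), $w_s=u_l(c_0,\dots,c_s)$ for $0\le s\le d-2$, and $w_{d-1}=u(y)$. Let $U_k$ be the constant value on stratum $k\neq l$. The plan is a discrete maximum-principle argument along this chain, run from the outermost ring inward.

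First rewrite equation \eqref{eq:equation_for_x_neq_y} at a point $x$ with $v_p(x)=l$ and first disagreement index $s+1$, exactly as in the proof of Theorem~\ref{thm:radially_symmetric_small_rho}. Equivalently, use \eqref{eq:compute_p^dDu} directly. The equation then has the form
\[
\rho p^d e^{w_s}=\sum_{t\neq s}\alpha_{s,t}(w_s-w_t)+\sum_{k\ne l}\beta_k(w_s-U_k),
\]
with explicit positive weights. The inner weights $\alpha_{s,t}$ come from counting the points at each ultrametric distance, with kernel $p^{2j}$. The cross-stratum weights $\beta_k$ are $(p-1)p^{d-1}p^{-|k-l|}$. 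At $y$ itself the equation instead carries the extra term $-\rho p^d\cdot p^d$ on the right, which forces $y$ to be the strict minimum; this is Lemma~\ref{lem:u_y_smallest}.

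Order the nested balls $B_s=\{x: v_p(x)=l,\ b_i=c_i \text{ for } i\le s\}$. The key step is to sum the equation (integrate $Du$) over the complement of $B_s$, or equivalently over $B_s$ itself. Since $\sum_x (Du)(x)\,\mu=0$ on the whole space, integrating over $B_s$ gives $\int_{B_s}Du=\rho(1-\int_{B_s}e^u)$. Only the edges leaving $B_s$ contribute to the left-hand side. In the ultrametric setting, every point outside $B_s$ is equidistant from all points of $B_s$, so this boundary flux equals
\[
\sum_{z\notin B_s}\sum_{x\in B_s} K(x,z)\bigl(u(z)-u(x)\bigr),
\]
and this expression is a positive combination of the differences between the values outside $B_s$ and the average of $u$ on $B_s$.

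By Lemma~\ref{lem:rough_upperbound}, $\int e^u=1$ over the whole space. The measure of $B_s$ is strictly less than the total measure, and the smallness of $\rho$ keeps $e^u$ controlled. Together these should give $\int_{B_s}e^u<1$, so the flux is positive. Positive flux means $u$ outside $B_s$ exceeds, on weighted average, $u$ inside $B_s$.

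This outer-versus-inner comparison must then be upgraded to the strict ordering $w_s<w_{s-1}$ of consecutive values. I would argue by induction from the center outward. Suppose $w_{d-1}<w_{d-2}<\dots<w_s$ are already known. Compare the equations at levels $s$ and $s-1$: subtracting them and using the monotonicity of $f(t)=\rho p^d e^t-kt$ (established in the proof of Theorem~\ref{thm:radially_symmetric_small_rho} for $\rho$ in this range) should isolate the sign of $w_{s-1}-w_s$.

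The main obstacle is the cross-stratum terms $U_k$. These values are not a priori ordered relative to $u_l$, so the flux argument only controls averages. The first thing I would try is to treat the whole union of the other strata together with the shell $\{b_0\neq c_0\}$ as the outermost region. The Gershgorin-type estimate on the diagonal coefficient $p^{2d-1}-(p^{-l}+p^{l+1-m})p^{d-1}$ should then show that the comparison at level $s=-1$, namely $w_0<u_l$, still closes.
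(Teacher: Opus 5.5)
Your inductive step (subtract the equations at two consecutive levels, induct outward from $y$ with Lemma~\ref{lem:u_y_smallest} as base case, and conclude by monotonicity of $t\mapsto\rho p^de^t-ct$) is exactly the paper's proof. However, you leave its only substantive step at ``should isolate the sign,'' and you put the difficulty in the wrong place. The cross-stratum terms are not an obstacle. For any point of stratum $l$, the kernel to a point of stratum $k\neq l$ is $p^{-|k-l|}$, independent of where the point sits inside stratum $l$. So the part $-\sum_{k\neq l}\beta_kU_k$ is identical in the equations at every level of stratum $l$ and cancels in the difference. What remains is $\beta(w_{s-1}-w_s)$ with $\beta=\sum_{k\neq l}\beta_k=p^{d-1}(2-p^{-l}-p^{l+1-m})$. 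No ordering of the $U_k$ relative to $u_l$ is needed, and the outermost comparison ($w_0$ against $w_{-1}=u_l$) is handled exactly like the others. The flux argument over $B_s$ can be dropped: it only compares averages, and $\int_{B_s}e^u<1$ is trivial since $e^u>0$. The merged-outer-region/Gershgorin idea can be dropped as well.

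What you actually have to verify is the identity obtained by subtracting the level-$s$ equation from the level-$(s-1)$ equation, for $0\le s\le d-2$. Let the level sizes be $N_{-1}=(p-2)p^{d-1}$, $N_r=(p-1)p^{d-2-r}$ for $0\le r\le d-2$, and $N_{d-1}=1$. Then, for some $\xi$ between $w_{s-1}$ and $w_s$,
\[
\bigl(c_{s-1}-\rho p^d e^{\xi}\bigr)(w_{s-1}-w_s)=(p^{2s+2}-p^{2s})\sum_{r>s}N_r\,(w_s-w_r),\qquad c_{s-1}=p^{d-1}\bigl(p^{s+1}+p^{s}-p^{-l}-p^{l+1-m}\bigr).
\]
This is the point your sketch skips: the subtraction does not isolate $w_{s-1}-w_s$. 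It leaves a remainder whose sign is fixed by the induction hypothesis. The right side is strictly positive because $w_s>w_r$ for all $r>s$, and the term $r=d-1$ is always present.

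The constant that must dominate $\rho p^de^\xi$ is $c_{s-1}$, not the diagonal coefficient $p^{2d-1}-(p^{-l}+p^{l+1-m})p^{d-1}$ you cite. $c_{s-1}$ is smallest at $s=0$, $l\in\{0,m-1\}$, where it equals $p^d-p^{d-m}$. Combined with $e^\xi\le p^d$ (Lemma~\ref{lem:rough_upperbound}), the hypothesis $\rho\le p^{-d}-p^{-d-m}$ gives exactly $\rho p^de^\xi\le c_{s-1}$, and hence $w_{s-1}>w_s$. So the threshold is tight for this argument precisely at the outermost comparison, but because of this coefficient, not because of the $U_k$. This is the paper's computation in shifted indices. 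Its identity $(p^2-1)p^{2(s+1)}\bigl(\sum_{j=s+2}^{d-2}(p-1)p^{d-2-j}+1\bigr)=p^{d-1}(p^{s+2}-p^{s})$ is the same bookkeeping that produces $c_{s-1}$.
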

\begin{proof}
From Theorem \ref{thm:radially_symmetric_small_rho}, when $\rho\leq p^{-d}-p^{-d-m}$, the solution is radially symmetric, so the equation variables can be reduced from $m(p-1)p^{d-1}$ to $p+m$. Then denote these variables by $u_0,\dots,u_{l-1},u_{l},u_{l}(c_0),\dots,u_l(c_0,\dots,c_{d-1}),u_{l+1},\dots,u_{m-1}$.

When $x\neq y$, by letting $v_p(z)=k$, equation \eqref{eq:matrix_equation} becomes
\begin{equation}
\begin{aligned}
\label{eq:equation_radial_symmetric_x_neq_y}
&\sum_{\substack{z\\0\leq k\leq l-1}}p^{k-l}u(z)+\sum_{\substack{z\\l+1\leq k\leq m-1}}p^{l-k}u(z)+\sum_{j=0}^{d-1}\sum_{\substack{a_i=c_i\\0\leq i\leq j-1\\a_{j}\neq c_{j}\\a_{j+1}\dots a_{d-1}}}p^{2j}u(z)\\
&-(p^{2d-1}-(p^{-l}+p^{l+1-m})p^{d-1})u_{l}+p^{d}\rho e^{u_{l}(x)}=0.
\end{aligned}
\end{equation}
When $b_i=c_i$ for $0\leq i\leq s$, then from \eqref{eq:compute_p^dDu_index_simplified}, we have
\begin{equation}
\begin{aligned}
\label{eq:equation_radial_symmetric_x_neq_y_crutial_part}
&\sum_{j=0}^{d-1}\sum_{\substack{a_i=c_i\\0\leq i\leq j-1\\a_{j}\neq c_{j}\\a_{j+1}\dots a_{d-1}}}p^{2j}u(z)-(p^{2d-1}-(p^{-l}+p^{l+1-m})p^{d-1})u_l(c_0,\dots,c_{s})\\
=&\sum_{j=0}^{s}\sum_{a_j\neq c_j}p^{d-1-j}p^{2j}u_l(c_0,\dots,c_{j-1})+p^{d+s}\sum_{a_{s+1}\neq c_{s+1}}u_l(c_0,\dots,c_{s})+\sum_{j=s+1}^{d-2}(p-1)p^{d+2s-j}u_l(c_0,\dots,c_{j})\\
&+p^{2(s+1)}u_l(c_0,\dots,c_{d-1})-(p^{s+2}+p^{s+1}-p^{-l}-p^{l+1-m})p^{d-1}u_l(c_0,\dots,c_{s})\\
=&\sum_{j=0}^{s}\sum_{a_j\neq c_j}p^{d-1+j}u_l(c_0,\dots,c_{j-1})+p^{d+s}(p-1-\delta_{s,-1})u_l(c_0,\dots,c_{s})\\
&+p^{2(s+1)}\left(\sum_{j=s+1}^{d-2}(p-1)p^{d-2-j}u_l(c_0,\dots,c_{j})+u_l(c_0,\dots,c_{d-1})\right)\\
&-(p^{s+2}+p^{s+1}-p^{-l}-p^{l+1-m})p^{d-1}u_l(c_0,\dots,c_{s})\\
=&\sum_{j=0}^{s}(p-1-\delta_{j0})p^{d-1+j}u_l(c_0,\dots,c_{j-1})+p^{2(s+1)}\left(\sum_{j=s+1}^{d-2}(p-1)p^{d-2-j}u_l(c_0,\dots,c_{j})+u_l(c_0,\dots,c_{d-1})\right)\\
&-(2p^{s+1}+\delta_{s,-1}-p^{-l}-p^{l+1-m})p^{d-1}u_l(c_0,\dots,c_{s})\\
\end{aligned}
\end{equation}
where $s=-1,0,1,\dots,d-2$.

We proceed by backward mathematical induction to prove this theorem:

From Lemma \ref{lem:u_y_smallest}, $u(y)$ is the unique global minimum of the solution, so we have $u_l(c_0,\dots,c_{d-2})>u_l(c_0,\dots,c_{d-1})$.

Suppose $u_l(c_0,\dots,c_{s+1})>\dots>u_l(c_0,\dots,c_{d-1})$, now we want to compare $u_l(c_0,\dots,c_{s+1})$ with $u_l(c_0,\dots,c_{s})$. First, by \eqref{eq:equation_radial_symmetric_x_neq_y} and \eqref{eq:equation_radial_symmetric_x_neq_y_crutial_part}, they satisfy the following equations:
\begin{equation}
    \begin{aligned}
    \label{eq:equation_radial_symmetric_up_to_s}
        &(p-1)p^{d-1}\left(\sum_{k=0}^{l-1}p^{k-l}u_k+\sum_{k=l+1}^{m-1}p^{l-k}u_k\right)
        +\sum_{j=0}^{s}(p-1-\delta_{j0})p^{d-1+j}u_l(c_0,\dots,c_{j-1})\\
        &+p^{2(s+1)}\left(\sum_{j=s+1}^{d-2}(p-1)p^{d-2-j}u_l(c_0,\dots,c_{j})+u_l(c_0,\dots,c_{d-1})\right)\\
        &-(2p^{s+1}+\delta_{s,-1}-p^{-l}-p^{l+1-m})p^{d-1}u_l(c_0,\dots,c_{s})+p^{d}\rho e^{u_{l}(c_0,\dots,c_s)}=0
    \end{aligned}
\end{equation}
and
\begin{equation}
    \begin{aligned}
    \label{eq:equation_radial_symmetric_up_to_s+1}
        &(p-1)p^{d-1}\left(\sum_{k=0}^{l-1}p^{k-l}u_k+\sum_{k=l+1}^{m-1}p^{l-k}u_k\right)
        +\sum_{j=0}^{s+1}(p-1-\delta_{j0})p^{d-1+j}u_l(c_0,\dots,c_{j-1})\\
        &+p^{2(s+2)}\left(\sum_{j=s+2}^{d-2}(p-1)p^{d-2-j}u_l(c_0,\dots,c_{j})+u_l(c_0,\dots,c_{d-1})\right)\\
        &-(2p^{s+2}-p^{-l}-p^{l+1-m})p^{d-1}u_l(c_0,\dots,c_{s+1})+p^{d}\rho e^{u_{l}(c_0,\dots,c_{s+1})}=0.
    \end{aligned}
\end{equation}
Subtracting \eqref{eq:equation_radial_symmetric_up_to_s+1} from \eqref{eq:equation_radial_symmetric_up_to_s} gives
    \begin{align*}
        0=&p^{d}\rho e^{u_{l}(c_0,\dots,c_{s+1})}-p^{d}\rho e^{u_{l}(c_0,\dots,c_{s})}\\
        &-(2p^{s+2}-p^{-l}-p^{l+1-m})p^{d-1}u_l(c_0,\dots,c_{s+1})-p^{2(s+1)}(p-1)p^{d-2-s-1}u_{l}(c_0,\dots,c_{s+1})\\
        &+(p^2-1)p^{2(s+1)}(\sum_{j=s+2}^{d-2}(p-1)p^{d-2-j}u_l(c_0,\dots,c_{j})+u_l(c_0,\dots,c_{d-1}))\\
        &+(2p^{s+1}+\delta_{s,-1}-p^{-l}-p^{l+1-m})p^{d-1}u_l(c_0,\dots,c_{s})+(p-1-\delta_{s+1,0})p^{d+s}u_l(c_0,\dots,c_{s}),
    \end{align*}
which is
    \begin{align*}
        0=&p^{d}\rho e^{u_{l}(c_0,\dots,c_{s+1})}-p^{d}\rho e^{u_{l}(c_0,\dots,c_{s})}\\
        &-(2p^{s+2}+p^{s+1}-p^{s}-p^{-l}-p^{l+1-m})p^{d-1}u_l(c_0,\dots,c_{s+1})\\
        &+(p^2-1)p^{2(s+1)}\left(\sum_{j=s+2}^{d-2}(p-1)p^{d-2-j}u_l(c_0,\dots,c_{j})+u_l(c_0,\dots,c_{d-1})\right)\\
        &+p^{d-1}(p^{s+2}+p^{s+1}-p^{-l}-p^{l+1-m})u_l(c_0,\dots,c_{s}).
    \end{align*}
Then by induction hypothesis and noting that
    \begin{align*}
        &(p^2-1)p^{2(s+1)}\left(\sum_{j=s+2}^{d-2}(p-1)p^{d-2-j}+1\right)\\
        =&(p^2-1)p^{2(s+1)}\left((p-1)p^{d-2}\frac{p^{-s-1}-p^{-d+2}}{p-1}+1\right)\\
        =&p^{d-1}(p^{s+2}-p^s),
    \end{align*}
we obtain
    \begin{align*}
        0<&p^{d}\rho e^{u_{l}(c_0,\dots,c_{s+1})}-p^{d}\rho e^{u_{l}(c_0,\dots,c_{s})}\\
        &-(p^{s+2}+p^{s+1}-p^{-l}-p^{l+1-m})p^{d-1}u_l(c_0,\dots,c_{s+1})\\
        &+p^{d-1}(p^{s+2}+p^{s+1}-p^{-l}-p^{l+1-m})u_l(c_0,\dots,c_{s}),
    \end{align*}
which is
    \begin{align*}
        0<&\rho e^{u_{l}(c_0,\dots,c_{s+1})}-\rho e^{u_{l}(c_0,\dots,c_{s})}\\
        &-p^{-1}(p^{s+2}+p^{s+1}-p^{-l}-p^{l+1-m})u_l(c_0,\dots,c_{s+1})\\
        &+p^{-1}(p^{s+2}+p^{s+1}-p^{-l}-p^{l+1-m})u_l(c_0,\dots,c_{s}).
    \end{align*}
Then by the monotonicity of $f_s(x)=\rho e^x-p^{-1}(p^{s+2}+p^{s+1}-p^{-l}-p^{l+1-m})x$, if $\rho\leq p^{-d}-p^{-d-m}$, we have $u_{l}(c_0,\dots,c_{s+1})<u_{l}(c_0,\dots,c_{s})$.\\
\end{proof}
 
\begin{remark}
The differences $u_{l}(c_0,\dots,c_{s})-u_{l}(c_0,\dots,c_{s+1})$ decrease with $s$ as
    \begin{align*}
        &f_s(u_{l}(c_0,\dots,c_{s+1}))-f_s(u_{l}(c_0,\dots,c_{s}))\\
        =&(p^2-1)p^{2(s+1)}\left(\sum_{j=s+2}^{d-2}(p-1)p^{d-2-j}((u_{l}(c_0,\dots,c_{s+1})-u_l(c_0,\dots,c_{j})\right)\\
        &+(u_{l}(c_0,\dots,c_{s+1})-u_l(c_0,\dots,c_{d-1})),
    \end{align*}
so by the monotonicity of the solution and $f_s(x)$, the gap between them decreases.
\end{remark}

\begin{remark}
    There always exists radially symmetric solutions as we can view the $p+d$ points as a finite weighted graph. Then from the results in \cite{huang2020existence} we obtain the existence of the solution, regardless of the value of $\rho$.
\end{remark}

\begin{remark}
    The upper bound of $\rho$ that ensures $u$ is radially symmetric and monotone is highly dependent on the upper bound of $u$. And Lemma \ref{lem:rough_upperbound} yields only a crude estimate for the upper bound of $u$.
\end{remark}

From Theorem \ref{thm:monoton_small_rho}, we can provides a sharper estimate of the upper bound for the solution. We now refine Lemma \ref{lem:rough_upperbound}:

\begin{lemma}
\label{lem:refined_upperbound}
    If $\rho\leq p^{-d}-p^{-d-m}$, then the radially symmetric solution $u$ satisfies
    \begin{align*}
        u(i)<\begin{cases}
        \ln\left(\frac{p}{p-2}\right),\ &p>2;\\
        \ln 4,&p=2.
    \end{cases}
    \end{align*}    
\end{lemma}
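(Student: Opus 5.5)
The plan is to combine the mass identity from the proof of Lemma \ref{lem:rough_upperbound}, namely $\sum_i e^{u_i}=p^d$ (each point has Haar measure $p^{-d}$, so this reads $\int e^u\,d\mu^\times=1$), with the radial symmetry of Theorem \ref{thm:radially_symmetric_small_rho} and the monotonicity of Theorem \ref{thm:monoton_small_rho}. The idea is that a point where $u$ is large forces a whole level set, of known cardinality, to carry that same value. Only a controlled fraction of the total mass can sit there.

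First I would identify where the maximum of $u$ is attained. Inside the stratum $v_p(x)=l$ of $y$, Theorem \ref{thm:monoton_small_rho} gives $u_l(c_0,\dots,c_{s})<u_l$ for every $s$. So within that stratum the maximum is the value $u_l$, and it is taken on the set $\{b_0\neq c_0\}$, which has $(p-2)p^{d-1}$ points. On each other stratum $k\neq l$, Theorem \ref{thm:radially_symmetric_small_rho} shows that $u$ is constant, equal to $u_k$, on all $(p-1)p^{d-1}$ points. In every case, then, the maximum of $u$ is attained on at least $(p-2)p^{d-1}$ points when $p>2$. If the maximum is $u_k$ with $k\ne l$, the count is even larger, $(p-1)p^{d-1}$.

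Next I plug this into the mass identity. Let $U=\max u$, attained on a set $S$ with $|S|\ge (p-2)p^{d-1}$. Since the other terms in $\sum_i e^{u_i}$ are strictly positive (in particular $e^{u(y)}>0$, and $y\notin S$), we get $|S|e^U<p^d$. Hence $e^U<p/(p-2)$, which is the bound claimed for $p>2$. The strictness comes from the positivity of the terms outside $S$.

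For $p=2$ the set $\{b_0\ne c_0\}$ is empty, so the maximizer inside stratum $l$ is instead $u_l(c_0)$, attained on $\{b_0=c_0,\ b_1\neq c_1\}$. This set has $(p-1)p^{d-2}=2^{d-2}$ points, and any competing maximum on another stratum $k\ne l$ is attained on $2^{d-1}$ points. Either way $|S|\ge 2^{d-2}$, so $e^U<2^d/2^{d-2}=4$. This needs $d\ge 2$. For $d=1$ the stratum of $y$ consists of $y$ alone, and if $m=1$ the argument degenerates (then $e^{u(y)}=2^d=2<4$ holds directly); I would dispose of these tiny cases separately.

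The main obstacle is bookkeeping rather than analysis. I need to pin down the notation of Theorem \ref{thm:monoton_small_rho}, in particular that $u_l$ denotes the value on $b_0\neq c_0$ and that $u_l(c_0,\dots,c_s)$ is the value on $b_i=c_i$ for $i\le s$ with $b_{s+1}\neq c_{s+1}$. I must also make sure that the comparison between strata is never needed: the argument only uses that whichever level set achieves the maximum has cardinality at least the stated amount, so no ordering between $u_k$ and $u_l$ is required.
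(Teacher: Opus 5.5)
Your proposal is correct and follows the paper's argument. Like the paper, you combine the mass identity $\sum_i e^{u_i}=p^d$ with radial symmetry and monotonicity to show that the maximum is attained on at least $(p-2)p^{d-1}$ points (resp.\ $2^{d-2}$ for $p=2$), and then divide. Your additional care — locating the maximizing level sets explicitly, getting strictness from $e^{u(y)}>0$, and checking the degenerate $d=1$ cases — fills in details the paper's one-line proof leaves implicit, without changing the approach.
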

\begin{proof}
    From Lemma \ref{lem:rough_upperbound}, we know that $\sum_{i}e^{u_i}=p^{d}$. Denoting $M$ to be the largest solution. Then from Theorem \ref{thm:monoton_small_rho}, we can see there at least $(p-2)p^{d-1}$ such $M$ when $p>2$, so we get
    \begin{align*}
        (p-2)p^{d-1}e^M<p^d
    \end{align*}
    thus $M<\ln\left(\frac{p}{p-2}\right)$.\\
    When $p=2$, there are at least $2^{d-2}$ such $M$, so $M<\ln 4$.
\end{proof}

Using this improved bound, we can refine Theorem \eqref{thm:radially_symmetric_small_rho} and Theorem \eqref{thm:monoton_small_rho} by method of continuity:

\begin{theorem}
\label{thm:refined_radial_symmetric_and_monotone}
     When $\rho\in(0,\rho^*)$, the radially symmetric solution is monotone with a universal upper bound $M$ independent of $d$ and $\rho$, where $\rho^*=\theta_{p,m} e^{-M}$ and $\theta_{p,m}$ is the smallest nonzero eigenvalue of $-\boldsymbol{A}$. Furthermore, when $\rho\in(0,(1-p^{-m})e^{-M}]$, the solution $u$ must be radially symmetric.
\end{theorem}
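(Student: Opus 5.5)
We will deform in $\rho$: a continuity argument along the branch of radially symmetric solutions, with the a priori bound of Lemma \ref{lem:refined_upperbound} made uniform along the branch. Throughout, $M$ denotes the supremum of $\max_x u$ over radially symmetric monotone solutions. By the counting argument of Lemma \ref{lem:refined_upperbound} (combined with $\sum_i e^{u_i}=p^d$ from Lemma \ref{lem:rough_upperbound}), $M$ is bounded by $\ln\frac{p}{p-2}$ for $p>2$ and by $\ln 4$ for $p=2$. This bound uses only monotonicity, not the size of $\rho$, so $M$ is independent of $d$ and $\rho$. It therefore suffices to show that monotonicity propagates along the solution branch for all $\rho<\rho^*$.

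\textbf{Step 1 (linearization is invertible, so the branch continues).} Work on the reduced system of $p+m$ radial variables. At a solution $u$, the linearized operator is $\boldsymbol{A}+\rho\,\mathrm{diag}(e^{u})$. For $v\perp\boldsymbol{1}$ we have $\langle -\boldsymbol{A}v,v\rangle\ge\theta_{p,m}|v|^2$, while $\rho\, e^{u}\le\rho e^{M}<\theta_{p,m}$ whenever $\max u\le M$ and $\rho<\rho^*$. Using this, I would show that $-\boldsymbol{A}-\rho\,\mathrm{diag}(e^u)$ has trivial kernel. Suppose $w$ lies in the kernel and split $w=\bar w\boldsymbol{1}+w^\perp$. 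Pairing with $\boldsymbol{1}$ gives $\rho\langle e^u,w\rangle=0$. Pairing with $w$ gives $\langle -\boldsymbol{A}w^\perp,w^\perp\rangle=\rho\langle e^u w,w\rangle$. I would then combine these two identities to conclude $w=0$. This step is exactly where $\rho^*=\theta_{p,m}e^{-M}$ enters, and it is also the main obstacle: I expect the sign bookkeeping to need care, and may require working on the subspace fixed by the constraint $\sum e^{u}=p^d$.

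\textbf{Step 2 (monotonicity is open and closed).} Let $S$ be the set of $\rho\in(0,\rho^*)$ such that the branch through radial solutions is strictly monotone. By Theorem \ref{thm:monoton_small_rho}, $S$ contains $(0,p^{-d}-p^{-d-m}]$, so it is nonempty. By the implicit function theorem and Step 1, $S$ is open, since strict inequalities persist. For closedness, take $\rho_k\to\rho$. The bound $\max u\le M$ keeps $u$ bounded above. Lemma \ref{lem:u_y_smallest} together with the equation at $y$ gives a lower bound on $u$. Hence the solutions converge, and the limit is monotone in the weak sense. I would then rerun the backward induction of Theorem \ref{thm:monoton_small_rho}. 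The base case comes from Lemma \ref{lem:u_y_smallest}. The inductive step uses monotonicity of $f_s(x)=\rho e^x-p^{-1}(p^{s+2}+p^{s+1}-p^{-l}-p^{l+1-m})x$ on $x\le M$, which holds because $\rho e^{M}<\theta_{p,m}\le p^{-1}(p+1-p^{-l}-p^{l+1-m})$ after normalizing $\boldsymbol{A}$. This comparison of $\theta_{p,m}$ with the smallest coefficient is something I would check against the spectrum computed in Section 2. The induction upgrades weak monotonicity to strict monotonicity, so $S$ is closed and equals $(0,\rho^*)$.

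\textbf{Step 3 (radial symmetry for $\rho\le(1-p^{-m})e^{-M}$).} Rerun the proof of Theorem \ref{thm:radially_symmetric_small_rho}, replacing the crude bound $u\le d\ln p$ by the bound $u\le M$. This bound holds a priori for every solution by Step 2 and uniqueness of the continued branch. The monotonicity condition for $\rho p^{d}e^x-kx$ on $x\le M$ then becomes $\rho p^{2d}e^{M}\le\min k=p^{d}-p^{d-m}$ in the original scaling. After absorbing the $p^{-d}$ normalization of $\boldsymbol{A}$, this is the threshold $(1-p^{-m})e^{-M}$. The recursive layer-by-layer argument then shows that every solution is constant on spheres, i.e. radially symmetric.
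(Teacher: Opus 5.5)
Your Steps 1 and 2 follow the paper's continuity argument. $S$ is nonempty by Theorem \ref{thm:monoton_small_rho}. It is open by the implicit function theorem, because $\rho e^{u}\le\rho e^{M}<\theta_{p,m}$ makes $\boldsymbol{A}+\rho\,\mathrm{diag}(e^{\boldsymbol{u}})$ nondegenerate. It is closed by compactness plus the backward induction with $f_s$. The comparison you wanted to check holds: every coefficient in $f_s$ is at least $1-p^{-m}\ge\theta_{p,m}$.

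The obstacle you flag in Step 1 is not real, and no constraint subspace is needed. Work in the full space, where $\boldsymbol{A}$ is symmetric; injectivity there gives injectivity on the invariant subspace $V_{rad}$. Set $W=\rho\,\mathrm{diag}(e^{\boldsymbol{u}})$. The identity $\langle Ww,\boldsymbol{1}\rangle=0$ gives $\langle Ww,w\rangle\le\langle W(w-c\boldsymbol{1}),w-c\boldsymbol{1}\rangle$ for every $c$. Taking $c=\bar w$ yields $\theta_{p,m}|w^{\perp}|^2\le\langle-\boldsymbol{A}w^{\perp},w^{\perp}\rangle=\langle Ww,w\rangle\le\langle Ww^{\perp},w^{\perp}\rangle\le\max_i W_{ii}\,|w^{\perp}|^2$. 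Since $\max_i W_{ii}<\theta_{p,m}$, this forces $w^{\perp}=0$, and then $\bar w\langle W\boldsymbol{1},\boldsymbol{1}\rangle=0$ forces $w=0$. Weyl's inequalities, as in Lemma \ref{lem:non-degenerate_operator}, give the same conclusion; the paper simply asserts nondegeneracy here.

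Your lower bound in the closedness step, from Lemma \ref{lem:u_y_smallest} and the equation at $y$, fills a hole in the paper. The paper appeals to finite dimensionality without checking that the solutions stay bounded below.

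The genuine gap is Step 3. The layer-by-layer argument of Theorem \ref{thm:radially_symmetric_small_rho} applies strict monotonicity of $t\mapsto\rho p^{d}e^{t}-kt$ to the values of an \emph{arbitrary}, a priori non-radial solution at sibling points. It therefore needs an upper bound valid for every solution, and your justification does not supply one:
\begin{itemize}
\item $M$ is a supremum over radial monotone solutions only. The counting argument of Lemma \ref{lem:refined_upperbound} behind it needs exactly that structure.
\item The implicit function theorem gives uniqueness only near $(\rho,\boldsymbol{u}_\rho)$. It cannot exclude a non-radial solution far from the branch, e.g.\ one born at a fold where its own linearization degenerates. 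Ruling that out would require already knowing $\rho e^{v}<\theta_{p,m}$ for that solution.
\item Global uniqueness is proved in the paper only afterwards, and it uses precisely this radial-symmetry statement. Invoking it here is circular.
\item For $m>1$, Lemma \ref{lem:smallest_non-zero_eigenvalue_-A} gives $\theta_{p,m}\le p^{-1}+(p-2)p^{-m}<1-p^{-m}$. So $(0,(1-p^{-m})e^{-M}]$ extends beyond $\rho^*$, where Step 2 says nothing even about radial solutions.
\end{itemize}
The only upper bound available for arbitrary solutions is $u\le d\ln p$ (Lemma \ref{lem:rough_upperbound}), and that returns the old threshold $p^{-d}-p^{-d-m}$.

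What your Step 3 does prove is a conditional statement: every solution with $\max u\le M$ is radially symmetric when $\rho\le(1-p^{-m})e^{-M}$. The paper's proof settles this part with ``by the same argument'' and does not provide the missing a priori bound either. So you have not overlooked an idea from the paper, but as written the claim is not established.

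A minor correction: the condition should read $\rho p^{d}e^{M}\le p^{d}-p^{d-m}$. The factor $p^{2d}$ came from $e^{u}\le p^{d}$, which is exactly the bound you are replacing.
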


\begin{proof}
    Denoting
    \begin{align*}
        S=\{\rho\in(0,\rho^*)|\text{ The radially symmetric solution of \eqref{eq:matrix_equation} satisfies strict monotonicity}\},
    \end{align*}
     and
    \begin{align*}
        M=\sup_{u}\left\{\max_{x}u|\ u\text{ is the radially symmetric monotone solution of \eqref{eq:matrix_equation}}\right\}
    \end{align*}
    
    First, from Theorem \ref{thm:monoton_small_rho}, we have $(0,p^{-d}-p^{-d-m}]\in S$.

    Next, suppose $\rho_0\in S$, and the corresponding radially symmetric solution is $\boldsymbol{u}_0$. Then by the same argument of Lemma \ref{lem:refined_upperbound}, we have
    \begin{align*}
        u_0(i)\leq M<\begin{cases}
        \ln\left(\frac{p}{p-2}\right),\ &p>2;\\
        \ln 4,&p=2,
    \end{cases}
    \end{align*} 
    which means any radially symmetric solution have a universal upper bound, so $M$ is well defined. Now considering
    \begin{align*}
        F:\mathbb{R}\times V_{rad}&\to V_{rad}\\
        (\rho,\boldsymbol{u})&\mapsto F(\rho,\boldsymbol{u})=\boldsymbol{A}\boldsymbol{u}+\rho e^{\boldsymbol{u}}-\rho p^{d}\boldsymbol{e}_y,
    \end{align*}
    where $V_{rad}$ is a linear subspace formed by radially symmetric functions with dim$(V_{rad})\leq p+m$. Then
    \begin{align*}              L_{\rho_0}:=D_{\boldsymbol{u}}F(\rho_0,\boldsymbol{u}_0)=\boldsymbol{A}+\rho_0\text{diag}(e^{\boldsymbol{u}_0}):V_{rad}\to V_{rad}
    \end{align*}
    is nondegenerate since $\rho_0e^{u_0(i)}<\rho^*e^M=\theta_{p,m}$, then by the Implicit Function Theorem, there exists $\varepsilon>0$ such that
    \begin{align*}
        (\rho_0-\varepsilon,\rho_0+\varepsilon)&\to V_{rad}\\
        \rho&\mapsto \boldsymbol{u}_\rho
    \end{align*}
    satisfies $F(\rho,\boldsymbol{u}_\rho)=0$ and $\boldsymbol{u}_{\rho_0}=\boldsymbol{u}_0$. Since we can take $\varepsilon$ small enough to ensure $\boldsymbol{u}_\rho$ is radially symmetric, so $S$ is open in $(0,\rho^*)$.

    Last, suppose $\{\rho_n\}_n\subseteq S$ and $\rho_n\to \rho\in(0,\rho^*)$ with the corresponding radially symmetric solutions $\boldsymbol{u}_n$. Since dim$(V_{rad})\leq p+m$, then there exists a subsequence $\boldsymbol{u}_{n_k}\to\boldsymbol{u}$. By the continuity of \eqref{eq:matrix_equation}, $\boldsymbol{u}$ is the radially symmetric solution corresponding to $\rho$, and $\boldsymbol{u}$ is at least non-strictly monotone. Suppose there exists $s$ such that $u_l(c_0,\dots,c_{d-1})<u_l(c_0,\dots,c_{d-2})<\dots<u_l(c_0,\dots,c_{s+1})=u_l(c_0,\dots,c_{s})$, then by the same argument of Theorem \ref{thm:monoton_small_rho}, we have
    \begin{align*}
        f_s(u_l(c_0,\dots,c_{s+1}))>f_s(u_l(c_0,\dots,c_{s})),
    \end{align*}
    a contradiction. So $S$ is closed in $(0,\rho^*)$.

    In conclusion, $S=(0,\rho^*)$. Then by the same argument of Theorem \ref{thm:radially_symmetric_small_rho}, we obtain when $\rho\in(0,(1-p^{-m})e^{-M}]$, the solution $u$ must be radially symmetric.
\end{proof}

\subsection{Uniqueness of the solution}
In this part, we discuss the uniqueness of the solution. Numerical experiments indicate that for large $\rho$, solutions may not unique, and radial symmetry may be broken. Then by Lemma \ref{lem:central_symmetric} and Lemma \ref{lem:cyclic_symmetric}, we can construct many non-radially symmetric solutions. Besides, the monotonicity of the solutions may not be ensured for large $\rho$. For instance, if we take $p=3$, $m=1$, $d=2$, $\rho=5$ and $y=1$, then there exists non-monotone solution $\boldsymbol{u}_{nmon}$ and non-radially symmetric solution $\boldsymbol{u}_{nrad}$, see in Table \ref{tab:nmon_and_nrad_solution}.

\begin{table}[h!]
    \centering
    \begin{tabular}{|c|c|c|}
        \hline
        &$\boldsymbol{u}_{nmon}$& $\boldsymbol{u}_{nrad}$ \\ \hline
        $u(1)$ & -18.2689 & -22.2707\\ \hline
        $u(2)$ & -1.3831 & -0.4886\\ \hline
        $u(4)$ & 1.4168 & 1.9684\\ \hline
        $u(5)$ & -1.3831 & -0.4886\\ \hline
        $u(7)$ & 1.4168 & -8.7704\\ \hline
        $u(8)$ & -1.3831 & -0.4886\\ \hline
    \end{tabular}
    \caption{Non-monotone solutions and non-radially symmetric solutions}
    \label{tab:nmon_and_nrad_solution}
\end{table}

A natural question is whether uniqueness holds under the conditions of $\rho$ guaranteeing radially symmetric and monotonicity of the solution, the answer is affirmative. In order to investigate it, our first step is to study the eigenvalue of $\boldsymbol{A}$, i.e. the spectrum of $D$ on the finite quotient $\mathbb{Q}_{p}^{\times}/p^{m\mathbb{Z}}/(1+p^d\mathbb{Z}_{p})$. As we can pull back the functions from $\mathbb{Q}_{p}^{\times}/p^{m\mathbb{Z}}/(1+p^d\mathbb{Z}_{p})$ to $\mathbb{Q}_{p}^{\times}/p^{m\mathbb{Z}}$, so the eigenvalue of $\boldsymbol{A}$ is just the subset of the spectrum of $D$ by taking the first $d$-th eigenvalues from each component. Alternatively, we can compute the eigenvalues of $\boldsymbol{A}$ via linear algebra as follows:

\begin{theorem}
    Eigenvalues and the eigenvectors of $\boldsymbol{A}$.
\end{theorem}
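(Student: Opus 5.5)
Since $\boldsymbol{A}$ is $D$ restricted to functions constant on cosets of $1+p^d\mathbb{Z}_p$, I will obtain its spectrum by testing the eigenfunctions of Theorem \ref{thm:spectrum_D_general_m} that factor through $\mathbb{Q}_p^{\times}/p^{m\mathbb{Z}}/(1+p^d\mathbb{Z}_p)$, and then redo the conductor computation with the truncated sum \eqref{eq:compute_Du} in place of the integral. The candidate eigenvectors are the vectors $\boldsymbol{\phi}^l_{h,c,n}$, with entries $\phi_{h,c,n}(p^{-l}x)\boldsymbol{1}_{p^l\mathbb{Z}_p^*}(x)$, for $1\le n\le d$, together with the stratum-constant vectors $\boldsymbol{\phi}_0^l=\sum_k q_{k,l}\boldsymbol{1}_{p^k\mathbb{Z}_p^*}$. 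The ones with $n\le d$ are exactly the characters trivial on $1+p^d\mathbb{Z}_p$.

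\textbf{Stratum-constant vectors.} Because $\boldsymbol{B}_{ij}=p^{-|i-j|}\boldsymbol{1}\boldsymbol{1}^\top$ for $i\ne j$ and each diagonal block has constant row sums, $\boldsymbol{A}$ maps stratum-constant vectors to stratum-constant vectors. On these vectors it acts through an $m\times m$ matrix, and I will check from the coefficient \eqref{eq:coefficient_of_u(x)} (and the factor $p^{-d}\cdot(p-1)p^{d-1}$) that this matrix is exactly $\boldsymbol{R}$. This yields the eigenvalues $\lambda_{0,l}$ with eigenvectors $\boldsymbol{q}_l$ lifted to $\boldsymbol{\phi}_0^l$.

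\textbf{Nontrivial characters.} Take $1\le n\le d$ and $x$ with $v_p(x)=l$. The off-stratum blocks annihilate $\boldsymbol{\phi}^l_{h,c,n}$ by Lemma \ref{lem:orthogonal_characters_m=1}, so only two pieces remain: the diagonal contribution $-p^{-d}(p^{2d-1}-(p^{-l}+p^{l+1-m})p^{d-1})$ and the in-stratum sum $p^{-d}\sum_{j}p^{2j}\sum\chi(z)$. For the in-stratum sum I group $z$ by the first digit $j$ at which it differs from $x$. The first orthogonality relation, as in Theorem \ref{thm:spectrum_D_m=1}, kills the shells with $j\le n-2$ (apart from their $-\chi(x)$ bookkeeping). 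For $j\ge n$ the shells are constant and equal to $\chi(x)$, and they cancel against the corresponding part of the diagonal. The shell $j=n-1$ contributes $-p^{2(n-1)}p^{d-n}\chi(x)$. Collecting the terms should give $\lambda_{n,l}=-p^{n-1}-p^{n-2}+\frac{p^{-l}+p^{l+1-m}}{p}$, independent of $d$.

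\textbf{Completeness and the main obstacle.} I will count dimensions: $m+\sum_{l}\sum_{n=1}^{d}\#C_n^l=m+m((p-1)p^{d-1}-1)=m(p-1)p^{d-1}$, which equals the size of $\boldsymbol{A}$. Orthogonality, from Lemma \ref{lem:orthogonal_characters_general_m}, then makes the list a complete eigenbasis, with multiplicities $p-2$ and $(p-1)^2p^{n-2}$ as before. Consequently the smallest nonzero eigenvalue $\theta_{p,m}$ of $-\boldsymbol{A}$ is $\min\{\min_{l\ge1}|\lambda_{0,l}|,\min_l|\lambda_{1,l}|\}$, independent of $d$. This independence is what Theorem \ref{thm:refined_radial_symmetric_and_monotone} needs.

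The hard step is the shell bookkeeping in the truncated sum. There the top shell $j=d-1$ and the identity $\sum_{j\ge n}(p-1)p^{d-1-j}p^{2j}$ must cancel exactly against the diagonal entry. I will first verify this cancellation for $n=d$ and then for general $n$.
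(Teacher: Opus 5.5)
Your proposal is correct, but it takes a different route from the paper's proof.

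\textbf{The paper's route.} The paper writes $\boldsymbol{B}=\boldsymbol{P}\otimes\boldsymbol{1}_n\boldsymbol{1}_n^{\top}+\boldsymbol{I}_m\otimes\boldsymbol{T}-\mathrm{diag}(\cdots)$, treats each diagonal block as circulant, and conjugates by $\boldsymbol{I}_m\otimes\boldsymbol{F}$, where $\boldsymbol{F}$ is the DFT matrix. The zero-frequency modes collapse to $n(\boldsymbol{P}-\mathrm{diag}(\boldsymbol{P}\boldsymbol{1}_m))$, which gives $\sigma(\boldsymbol{R})$. Every other frequency decouples into the scalar $p^{-d}(t_i-t_0)-\frac{1+p-p^{-j}-p^{j+1-m}}{p}$. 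The circulant eigenvalues $t_i$ are never computed. The paper identifies these scalars with $\lambda_{n,j}$ only by saying that the spectrum of $\boldsymbol{A}$ ``must be compatible with the spectrum of $D$''.

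\textbf{Your route.} Your argument is the alternative the paper mentions in one sentence before the theorem and does not carry out, and it proves exactly that identification. The characters of $\mathbb{Z}_p^*/(1+p^d\mathbb{Z}_p)$, placed on one stratum, diagonalize the in-stratum block because its weight $|z|_p|x|_p|z-x|_p^{-2}=|z/x-1|_p^{-2}$ depends only on $z/x$. So you never need an ordering in which the block is circulant. You compute the eigenvalue explicitly, and you get completeness and multiplicities from the count $m+m((p-1)p^{d-1}-1)=m(p-1)p^{d-1}$. That count also shows the conductors run over $1\le n\le d$. The paper's closing display indexes these eigenvalues by $i=1,\dots,d-1$, which would drop the conductor-$d$ characters.

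\textbf{What each buys.} The paper's Kronecker form shows at a glance why the stratum-constant part is governed by $\boldsymbol{R}$ and why the rest splits stratum by stratum. Your route actually evaluates the nonzero-frequency eigenvalues.

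\textbf{Two small remarks.} First, the step you flag as hard is optional. Since $|z-x|_p$ is constant on each coset $z_0(1+p^d\mathbb{Z}_p)$ not containing $x$, $D$ maps coset-constant functions to coset-constant functions and acts on them by \eqref{eq:compute_Du}. So eigenfunctions of $D$ in that subspace are automatically eigenvectors of $\boldsymbol{A}$. Your bookkeeping does close up anyway. The shells $j\ge n$ contribute $\sum_{j=n}^{d-1}(p-1)p^{d-1+j}=p^{2d-1}-p^{d+n-1}$, which cancels the $p^{2d-1}$ in the diagonal entry. Together with $-p^{d+n-2}$ from the shell $j=n-1$, this leaves exactly $\lambda_{n,l}$. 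The case $n=d$, where the first sum is empty, works the same way.

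Second, in your formula for $\theta_{p,m}$ the minimum should run only over eigenvalues of nonzero multiplicity. For $p=2$ we have $m_{1,l}=p-2=0$, so the $\lambda_{1,l}$ are not eigenvalues. This matters when $m=1$: the true value there is $|\lambda_{2,0}|=2$, not $\frac12$. The paper's Lemma \ref{lem:smallest_non-zero_eigenvalue_-A} has the same slip. A smaller $\theta$ only shrinks $\rho^*$, so nothing downstream breaks.
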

\begin{proof}
    As $\boldsymbol{A}=p^{-d}\boldsymbol{B}$, we first consider $\boldsymbol{B}$. Denoting $n=(p-1)p^{d-1}$. Then $\boldsymbol{B}\in\mathbb{R}^{mn\times mn}$. From previous observation, $\boldsymbol{B}$ is a block matrix and each block is a circulant matrix. Furthermore, the off-diagonal part is a matrix with the same element. Denoting $\boldsymbol{P}\in\mathbb{R}^{m\times m}$ with $(\boldsymbol{P})_{ij}=p^{-|i-j|}$. And $\boldsymbol{T}\in\mathbb{R}^{n\times n}$ be the circulant part on diagonal with $(\boldsymbol{T})_{ij}=\left(\frac{|z|_p|x|_p}{|z-x|_p^2}-1\right)(1-\delta_{zx})$ (i.e. $\boldsymbol{T}$ is zero on diagonal and $\boldsymbol{T}$ is independent of $v_p(z)=v_p(x)$). Then we can write $\boldsymbol{B}$ as
    \begin{align*}        \boldsymbol{B}=\boldsymbol{P}\otimes\boldsymbol{1}_n\boldsymbol{1}_n^{\top}+\boldsymbol{I}_m\otimes\boldsymbol{T}-\text{diag}((\boldsymbol{P}\otimes\boldsymbol{1}_n\boldsymbol{1}_n^{\top}+\boldsymbol{I}_m\otimes\boldsymbol{T})\boldsymbol{1}_{mn}).
    \end{align*}
    First, we calculate
        \begin{align*}
            (\boldsymbol{P}\otimes\boldsymbol{1}_n\boldsymbol{1}_n^{\top}+\boldsymbol{I}_m\otimes\boldsymbol{T})\boldsymbol{1}_{mn}=&(\boldsymbol{P}\otimes\boldsymbol{1}_n\boldsymbol{1}_n^{\top}+\boldsymbol{I}_m\otimes\boldsymbol{T})\boldsymbol{1}_{m}\otimes\boldsymbol{1}_n\\
            =&\boldsymbol{P}\boldsymbol{1}_m\otimes\boldsymbol{1}_n\boldsymbol{1}_n^{\top}\boldsymbol{1}_n+\boldsymbol{I}_m\boldsymbol{1}_m\otimes\boldsymbol{T}\boldsymbol{1}_n\\
            =&n\boldsymbol{P}\boldsymbol{1}_m\otimes\boldsymbol{1}_n+\boldsymbol{1}_m\otimes\boldsymbol{T}\boldsymbol{1}_n,
        \end{align*}
    let $t_0$ be the row sum of $\boldsymbol{T}$. Then
        \begin{align*}
            \text{diag}((\boldsymbol{P}\otimes\boldsymbol{1}_n\boldsymbol{1}_n^{\top}+\boldsymbol{I}_m\otimes\boldsymbol{T})\boldsymbol{1}_{mn})=&\text{diag}(n\boldsymbol{P}\boldsymbol{1}_m\otimes\boldsymbol{1}_n)+\text{diag}(\boldsymbol{1}_m\otimes\boldsymbol{T}\boldsymbol{1}_n)\\
            =&n\text{diag}(\boldsymbol{P}\boldsymbol{1}_m\otimes\boldsymbol{1}_n)+\text{diag}(\boldsymbol{1}_m\otimes t_0\boldsymbol{1}_n)\\
            =&n\text{diag}(\boldsymbol{P}\boldsymbol{1}_m)\otimes\boldsymbol{I}_n+t_0\boldsymbol{I}_{mn},
        \end{align*}
    hence
    \begin{align*}
        \boldsymbol{B}=\boldsymbol{P}\otimes\boldsymbol{1}_n\boldsymbol{1}_n^{\top}+\boldsymbol{I}_m\otimes\boldsymbol{T}-(n\text{diag}(\boldsymbol{P}\boldsymbol{1}_m)\otimes\boldsymbol{I}_n+t_0\boldsymbol{I}_{mn}).
    \end{align*}
    Let $\boldsymbol{F}$ be the DFT matrix, as any circulant matrix can be diagonalized by a DFT matrix, denoting the eigenvalue of $\boldsymbol{T}$ is $t_0,t_1\dots,t_{n-1}$, which is real and $t_0$ is the largest as $\boldsymbol{T}$ is a symmetric circulant matrix and the elements in $\boldsymbol{T}$ are nonnegative. Then
        \begin{align*}
            \boldsymbol{I}_m\otimes\boldsymbol{F}^{-1}(\boldsymbol{P}\otimes\boldsymbol{1}_n\boldsymbol{1}_n^{\top})\boldsymbol{I}_m\otimes\boldsymbol{F}=&\boldsymbol{I}_m\boldsymbol{P} \boldsymbol{I}_m\otimes\boldsymbol{F}^{-1}\boldsymbol{1}_n\boldsymbol{1}_n^{\top}\boldsymbol{F}\\
            =&\boldsymbol{P}\otimes\begin{pmatrix} n & \\ & \boldsymbol{O}_{n-1}\end{pmatrix}\\
            =&\begin{pmatrix} n\boldsymbol{P} & \\ & \boldsymbol{O}_{mn-m}\end{pmatrix},
        \end{align*}
        \begin{align*}
             \boldsymbol{I}_m\otimes\boldsymbol{F}^{-1}(\boldsymbol{I}_m\otimes\boldsymbol{T})\boldsymbol{I}_m\otimes\boldsymbol{F}=\boldsymbol{I}_m\otimes\boldsymbol{F}^{-1}\boldsymbol{T}\boldsymbol{F}=\boldsymbol{I}_m\otimes\boldsymbol{\Lambda},
        \end{align*}
        \begin{align*}
             \boldsymbol{I}_m\otimes\boldsymbol{F}^{-1}(n\text{diag}(\boldsymbol{P}\boldsymbol{1}_m)\otimes\boldsymbol{I}_n)\boldsymbol{I}_m\otimes\boldsymbol{F}=&n\text{diag}(\boldsymbol{P}\boldsymbol{1}_m)\otimes\boldsymbol{F}^{-1}\boldsymbol{I}_n\boldsymbol{F}\\
             =&n\text{diag}(\boldsymbol{P}\boldsymbol{1}_m)\otimes\boldsymbol{I}_n,
        \end{align*}
        \begin{align*}
             \boldsymbol{I}_m\otimes\boldsymbol{F}^{-1}(t_0\boldsymbol{I}_{mn})\boldsymbol{I}_m\otimes\boldsymbol{F}=t_0\boldsymbol{I}_{mn}.
        \end{align*}
    Thus
        \begin{align*}
            \boldsymbol{I}_m\otimes\boldsymbol{F}^{-1}(\boldsymbol{B})\boldsymbol{I}_m\otimes\boldsymbol{F}=&\begin{pmatrix} n\boldsymbol{P} & \\ & 0\end{pmatrix}+\boldsymbol{I}_m\otimes\boldsymbol{\Lambda}-n\text{diag}(\boldsymbol{P}\boldsymbol{1}_m)\otimes\boldsymbol{I}_n-t_0\boldsymbol{I}_{mn}\\
            =&\begin{pmatrix} n\boldsymbol{P}-n\text{diag}(\boldsymbol{P}\boldsymbol{1}_m) & \\ & (t_i-t_0)\boldsymbol{I}_m-n\text{diag}(\boldsymbol{P}\boldsymbol{1}_m)\end{pmatrix}.
        \end{align*}
    Denoting $s_j$ the row sum of $\boldsymbol{P}$, we have
        \begin{align*}
            s_j=&\sum_{i=0}^{m-1}p^{-|i-j|}\\
            =&\sum_{i=0}^{j-1}p^{-(j-i)}+\sum_{i=j}^{m-1}p^{-(i-j)}\\
            =&\frac{1+p-p^{-j}-p^{j+1-m}}{p-1}.
        \end{align*}
    So the eigenvalue of $\boldsymbol{B}$ is
    \begin{align*}
        n\sigma(\boldsymbol{P}-\text{diag}(\boldsymbol{P}\boldsymbol{1}_m))=(p-1)p^{d-1}\sigma(\boldsymbol{P}-\text{diag}(\boldsymbol{P}\boldsymbol{1}_m))
    \end{align*}
    and
    \begin{align*}
        t_i-t_0-ns_j=t_i-t_0-p^{d-1}(1+p-p^{-j}-p^{j+1-m}),
    \end{align*}
    where $i=1,\dots,n-1;\ j=0,\dots,m-1$. Hence, the eigenvalue of $\boldsymbol{A}$ is
    \begin{align*}
        (1-p^{-1})\sigma(\boldsymbol{P}-\text{diag}(\boldsymbol{P}\boldsymbol{1}_m))
    \end{align*}
    and
    \begin{align*}
        p^{-d}(t_i-t_0)-\frac{1+p-p^{-j}-p^{j+1-m}}{p}.
    \end{align*}
    Note that $\boldsymbol{P}-\text{diag}(\boldsymbol{P}\boldsymbol{1}_m)$ is symmetric, then there exists an orthogonal matrix $\boldsymbol{Q}\in\mathbb{R}^{m\times m}$ to diagonalize it. So the corresponding eigenvector is $\boldsymbol{I}_m\otimes\boldsymbol{F}\begin{pmatrix} \boldsymbol{Q} & \\ & \boldsymbol{I}_{m(p-1)p^{d-1}-m}\end{pmatrix}$. Moreover, the spectrum of $\boldsymbol{A}$ must be compatible with the spectrum of $D$. So by Theorem \ref{thm:spectrum_D_general_m}
    we have $p^{-d}(t_i-t_0)-\frac{1+p-p^{-j}-p^{j+1-m}}{p}=-p^{i-1}-p^{i-2}+\frac{p^{-j}+p^{j+1-m}}{p}$.
    In summary, we get
        \begin{align*}
            &(\boldsymbol{I}_m\otimes\boldsymbol{F}^{-1})\begin{pmatrix} \boldsymbol{Q}^{-1} & \\ & \boldsymbol{I}_{m(p-1)p^{d-1}-m}\end{pmatrix}(\boldsymbol{A})(\boldsymbol{I}_m\otimes\boldsymbol{F})\begin{pmatrix} \boldsymbol{Q} & \\ & \boldsymbol{I}_{m(p-1)p^{d-1}-m}\end{pmatrix}\\
            =&\begin{pmatrix} (1-p^{-1})\lambda_k & \\ & -p^{i-1}-p^{i-2}+\frac{p^{-j}+p^{j+1-m}}{p}\end{pmatrix}
        \end{align*}
    where $\lambda_k$ are the eigenvalues of $\boldsymbol{P}-\text{diag}(\boldsymbol{P}\boldsymbol{1}_m)$ for $k=0,\dots,m-1$, and $i=1,\dots,d-1$, $j=0,\dots,m-1$.
\end{proof}

\begin{lemma}
    \label{lem:smallest_non-zero_eigenvalue_-A}
    The smallest non-zero eigenvalue of $-\boldsymbol{A}$ is
    \begin{align*}
        \theta_{p,m}=\begin{cases}
            (1-p^{-1}),&m=1;\\
            -(1-p^{-1})\lambda_1,&m>1.
        \end{cases}
    \end{align*}
\end{lemma}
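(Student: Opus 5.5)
The plan is to read off the full spectrum of $-\boldsymbol{A}$ from the preceding theorem and compare its two families of nonzero eigenvalues directly. By that theorem, the spectrum of $-\boldsymbol{A}$ consists of:
\begin{itemize}
\item the values $-(1-p^{-1})\lambda_k$, where $\lambda_k\in\sigma(\boldsymbol{P}-\text{diag}(\boldsymbol{P}\boldsymbol{1}_m))$, $k=0,\dots,m-1$, with $\lambda_0=0$;
\item the values $\mu_{i,j}=p^{i-1}+p^{i-2}-\frac{p^{-j}+p^{j+1-m}}{p}$ for $i=1,\dots,d-1$ and $j=0,\dots,m-1$.
\end{itemize}
Here $-\lambda_k$ are the eigenvalues of $\boldsymbol{L}=\text{diag}(\boldsymbol{P}\boldsymbol{1}_m)-\boldsymbol{P}$, which is positive semidefinite because
\[
\boldsymbol{x}^\top\boldsymbol{L}\boldsymbol{x}=\sum_{i<j}p^{-|i-j|}(x_i-x_j)^2 .
\]
Its kernel is spanned by $\boldsymbol{1}_m$, since the graph with weights $p^{-|i-j|}$ is connected. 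We order $0=\lambda_0>\lambda_1\geq\dots$, so $-(1-p^{-1})\lambda_1$ is the smallest nonzero value in the first family.

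\textbf{Step 1: the second family.} Each $\mu_{i,j}$ is increasing in $i$, so its minimum occurs at $i=1$. The term $p^{-j}+p^{j+1-m}$ is convex in $j$, so it is maximized at an endpoint $j=0$ or $j=m-1$, with value $1+p^{1-m}$. Hence
\[
\min_{i,j}\mu_{i,j}=1+p^{-1}-\frac{1+p^{1-m}}{p}=1-p^{-m}>0 .
\]
For $m=1$ the first family is just $\{0\}$, so the answer is $1-p^{-1}$, which settles that case.

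\textbf{Step 2: the case $m>1$.} Here we must show $-(1-p^{-1})\lambda_1\le 1-p^{-m}$. This is the only real obstacle. We do not need $\lambda_1$ explicitly, only an upper bound, which we get from the Courant--Fischer characterization by choosing a test vector orthogonal to $\boldsymbol{1}_m$. The natural choice is $\boldsymbol{x}=\boldsymbol{e}_0-\boldsymbol{e}_{m-1}$, so $\|\boldsymbol{x}\|^2=2$. The quadratic form picks up three kinds of pairs:
\begin{itemize}
\item the pairs $(0,k)$ and $(k,m-1)$ with $1\le k\le m-2$, each with $(x_i-x_j)^2=1$;
\item the pair $(0,m-1)$, with $(x_0-x_{m-1})^2=4$.
\end{itemize}
This gives
\[
\frac{\boldsymbol{x}^\top\boldsymbol{L}\boldsymbol{x}}{\|\boldsymbol{x}\|^2}=\sum_{k=1}^{m-2}p^{-k}+2p^{1-m} .
\]
Multiplying by $(1-p^{-1})$ yields
\[
-(1-p^{-1})\lambda_1\le p^{-1}+p^{1-m}-2p^{-m}.
\]

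\textbf{Step 3: the final comparison.} It remains to check $p^{-1}+p^{1-m}-2p^{-m}\le 1-p^{-m}$, which is equivalent to
\[
p^{-1}+(p-1)p^{-m}\le 1 .
\]
For $m\ge 2$ the left side is at most $(2p-1)/p^2<1$, so the inequality is strict. Therefore the smallest nonzero eigenvalue of $-\boldsymbol{A}$ is $-(1-p^{-1})\lambda_1$.

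If the endpoint test vector had failed to give a sharp enough bound, the fallback would have been a linear test vector $x_i=i-\frac{m-1}{2}$. The estimate above turns out to suffice with room to spare. Finally, note that the comparison is uniform in $d$, because $d$ only affects how many $i$ appear in the second family and the minimum there is always at $i=1$. This is what makes $\theta_{p,m}$ independent of the truncation level.
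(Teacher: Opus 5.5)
Your argument is the paper's: read the spectrum of $-\boldsymbol{A}$ off the preceding theorem, bound the first nonzero eigenvalue of $\boldsymbol{L}$ by Courant--Fischer with the test vector $\boldsymbol{e}_0-\boldsymbol{e}_{m-1}$, and compare with $1-p^{-m}$. Your Rayleigh quotient $\sum_{k=1}^{m-2}p^{-k}+2p^{1-m}$ equals the paper's $\frac{1+(p-2)p^{1-m}}{p-1}$. The paper only uses that this is $\le 1$, giving $-(1-p^{-1})\lambda_1\le 1-p^{-1}<1-p^{-m}$ for $m>1$; you compare exactly instead. You also justify two things the paper leaves implicit: the second-family minimum $1-p^{-m}$, and the simplicity of the zero eigenvalue. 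For the latter you use connectivity, which is all Courant--Fischer needs; the paper's strict ordering $\mu_0<\mu_1<\cdots$ tacitly relies on distinctness of $\sigma(\boldsymbol{L})$, which was only proved under a condition. For $p\ge 3$ this is a complete proof.

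There is one case where a step fails, and it fails identically in the paper. In Step 1 you treat every $\mu_{1,j}$ as an actual eigenvalue. These come from the conductor-one characters, whose multiplicity is $p-2$ by Theorem~\ref{thm:spectrum_D_general_m}, so for $p=2$ there are none.

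When $m>1$ this is harmless. The true second-family minimum is then $\mu_{2,0}=\frac{5}{2}-2^{-m}$, and your first-family bound already lies below $1-p^{-m}$.

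For $p=2$, $m=1$, however, the asserted value $1-p^{-1}=\frac{1}{2}$ is not an eigenvalue at all. For $d=2$ the quotient has two points $1,3$ with $|1-3|_2=\frac{1}{2}$, so $-\boldsymbol{A}=\begin{pmatrix}1&-1\\-1&1\end{pmatrix}$, with spectrum $\{0,2\}$. For every $d\ge 2$ the smallest nonzero eigenvalue is $\mu_{2,0}=2$. So the $m=1$ case of the statement, and your proof of it, need $p\ge 3$; for $p=2$ the correct value is $2$.

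A smaller point of the same kind: the conductor index should run over $i=1,\dots,d$, not $1,\dots,d-1$, since every character of conductor at most $d$ is constant on cosets of $1+p^d\mathbb{Z}_p$. This does not change the minimum. With the range you copied, though, the second family would be empty when $d=1$, and your claim that the minimum sits at $i=1$ would have nothing to refer to.
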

\begin{proof}
Denoting $\boldsymbol{P}-\text{diag}(\boldsymbol{P}\boldsymbol{1}_m)=-\boldsymbol{L}$, then the eigenvalues of $\boldsymbol{L}$ is $\mu_i=-\lambda_i$ with $0=\mu_0<\mu_1<\dots<\mu_{m-1}$. As $\mu_0=0$ with eigenvector $\boldsymbol{1}_m$, so by Courant–Fischer min-max theorem, we have
\begin{align*}    \mu_1=\min_{\substack{\boldsymbol{x}\neq\boldsymbol{0}_m\\\boldsymbol{x}\bot\boldsymbol{1}_m}}\frac{\boldsymbol{x}^{\top}\boldsymbol{L}\boldsymbol{x}}{\boldsymbol{x}^{\top}\boldsymbol{x}}.
\end{align*}
Taking $\boldsymbol{x}=\boldsymbol{e}_0-\boldsymbol{e}_{m-1}$, we have
    \begin{align*}
        \frac{\boldsymbol{x}^{\top}\boldsymbol{L}\boldsymbol{x}}{\boldsymbol{x}^{\top}\boldsymbol{x}}=&\frac{1}{2}(L_{0,0}+L_{m-1,m-1}-2L_{0,m-1})\\
        =&\frac{1}{2}(2\frac{1-p^{1-m}}{p-1}+2p^{1-m})\\
        =&\frac{1+(p-2)p^{1-m}}{p-1}\\
        \leq& 1
    \end{align*}
with equality if and only if $p=2$, $\forall m$ and $m=1$, $\forall p$.\\
Therefore, the smallest nonzero eigenvalue of $-\boldsymbol{A}$ is
    \begin{align*}
        \theta_{p,m}=&\min_{\substack{1\leq k\leq m-1\\1\leq i\leq d-1\\0\leq j\leq m-1}}\left\{-(1-p^{-1})\lambda_k,\ p^{i-1}+p^{i-2}-\frac{p^{-j}+p^{j+1-m}}{p}\right\}\\
        =&\min\{-(1-p^{-1})\lambda_1,1-p^{-m}\}\\
        =&\begin{cases}
            (1-p^{-1}),&m=1;\\
            -(1-p^{-1})\lambda_1,&m>1.
        \end{cases}
    \end{align*}
\end{proof}

\begin{theorem}
When $\rho\in(0,\rho^*)$, then the solution of \eqref{eq:mean_field_equation_finite_quotient} is unique.
\end{theorem}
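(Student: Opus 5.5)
Our plan is a linearization and energy argument that uses the spectral gap $\theta_{p,m}$ of $-\boldsymbol{A}$ from Lemma \ref{lem:smallest_non-zero_eigenvalue_-A} together with the uniform bound $M$ from Theorem \ref{thm:refined_radial_symmetric_and_monotone}. Suppose $\boldsymbol{u}$ and $\boldsymbol{v}$ both solve \eqref{eq:matrix_equation} for the same $\rho\in(0,\rho^*)$. We first reduce to radially symmetric solutions. For $\rho\le(1-p^{-m})e^{-M}$ this reduction is supplied by Theorem \ref{thm:refined_radial_symmetric_and_monotone}. For the remaining range we would argue directly: the argument below only needs the pointwise bound $\max(u_i,v_i)\le M$, and we expect this bound for arbitrary solutions to be the delicate point (see the last paragraph). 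So first assume $u_i,v_i\le M$ for all $i$.

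Set $\boldsymbol{w}=\boldsymbol{u}-\boldsymbol{v}$. Subtracting the two equations gives
\begin{align*}
\boldsymbol{A}\boldsymbol{w}+\rho\,(e^{\boldsymbol{u}}-e^{\boldsymbol{v}})=\boldsymbol{0}.
\end{align*}
By the mean value theorem, $e^{u_i}-e^{v_i}=c_iw_i$ with $c_i=e^{\xi_i}$ for some $\xi_i$ between $u_i$ and $v_i$, so $0<c_i\le e^M$. Summing the rows, as in Lemma \ref{lem:rough_upperbound}, both solutions satisfy $\sum_ie^{u_i}=\sum_ie^{v_i}=p^d$. Hence $\sum_i c_iw_i=0$, and $\boldsymbol{w}$ is orthogonal to $\boldsymbol{c}$ rather than to $\boldsymbol{1}$. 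Multiplying the difference equation by $\boldsymbol{w}^\top$ gives
\begin{align*}
-\boldsymbol{w}^\top\boldsymbol{A}\boldsymbol{w}=\rho\sum_i c_iw_i^2 .
\end{align*}
Write $\boldsymbol{w}=\bar w\boldsymbol{1}+\boldsymbol{w}^\perp$ with $\boldsymbol{w}^\perp\perp\boldsymbol{1}$. By Courant--Fischer, the left side is at least $\theta_{p,m}\|\boldsymbol{w}^\perp\|^2$. The right side is $\rho\sum_ic_iw_i^2\le \rho e^M\|\boldsymbol{w}\|^2<\theta_{p,m}\|\boldsymbol{w}\|^2$.

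The constant mode $\bar w$ spoils the direct comparison. We handle it with a weighted splitting. Decompose $\boldsymbol{w}=\alpha\boldsymbol{1}+\boldsymbol{z}$ with $\boldsymbol{z}\perp\boldsymbol{1}$. The constraint $\sum c_iw_i=0$ gives $\alpha=-\langle \boldsymbol{c},\boldsymbol{z}\rangle/\sum c_i$. Then
\begin{align*}
\sum_ic_iw_i^2=\sum_ic_i(z_i+\alpha)^2=\sum_i c_iz_i^2-\frac{\langle\boldsymbol{c},\boldsymbol{z}\rangle^2}{\sum_ic_i}\le\sum_ic_iz_i^2\le e^M\|\boldsymbol{z}\|^2 .
\end{align*}
We also have $\boldsymbol{w}^\top\boldsymbol{A}\boldsymbol{w}=\boldsymbol{z}^\top\boldsymbol{A}\boldsymbol{z}$. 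Combining these,
\begin{align*}
\theta_{p,m}\|\boldsymbol{z}\|^2\le -\boldsymbol{z}^\top\boldsymbol{A}\boldsymbol{z}=\rho\sum_ic_iw_i^2\le\rho e^M\|\boldsymbol{z}\|^2<\theta_{p,m}\|\boldsymbol{z}\|^2
\end{align*}
unless $\boldsymbol{z}=\boldsymbol{0}$. So $\boldsymbol{z}=\boldsymbol{0}$. Then $\boldsymbol{w}=\alpha\boldsymbol{1}$, and $\sum c_iw_i=\alpha\sum c_i=0$ forces $\alpha=0$, hence $\boldsymbol{u}=\boldsymbol{v}$.

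The main obstacle is guaranteeing $c_i\le e^M$, that is, bounding an arbitrary (possibly non-radial) solution by $M$, since $M$ is defined as a supremum over radially symmetric monotone solutions only. Our first attempt is a continuity argument in $\rho$, modeled on the proof of Theorem \ref{thm:refined_radial_symmetric_and_monotone}, carried out in the full space $\mathbb{R}^{m(p-1)p^{d-1}}$ instead of $V_{rad}$. At any solution with $\rho e^{\max u}<\theta_{p,m}$, the computation above shows the linearization $\boldsymbol{A}+\rho\,\mathrm{diag}(e^{\boldsymbol{u}})$ has trivial kernel. Therefore, for $\rho$ near $0$, the set of solutions near the unique branch is a single branch. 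The radial branch exists on all of $(0,\rho^*)$ and satisfies $\rho e^{u}<\theta_{p,m}$ there. We would then need to rule out a second branch bifurcating in from infinity with large maximum. For this we would use the a priori estimate $\sum e^{u_i}=p^d$ together with Lemma \ref{lem:u_y_smallest} to control compactness. If this fails, we fall back to stating uniqueness within the class of solutions bounded by $M$, which includes all solutions when $\rho\le(1-p^{-m})e^{-M}$ by Theorem \ref{thm:refined_radial_symmetric_and_monotone}.
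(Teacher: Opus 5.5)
Your core energy argument is correct, and on the invertibility step it is more careful than the paper. The paper simply asserts that $\boldsymbol{A}+\rho\,\mathrm{diag}(e^{\boldsymbol{\xi}})$ is invertible because $\rho e^{\xi_i}<\theta_{p,m}$. Since $\boldsymbol{A}$ has kernel $\mathrm{span}(\boldsymbol{1})$ and the perturbation is a non-scalar diagonal, that claim needs exactly what you supply: the constraint $\sum_i c_iw_i=0$, which also follows from the vanishing column sums of $\boldsymbol{A}$, together with the weighted splitting $\sum_i c_iw_i^2=\sum_i c_iz_i^2-\langle\boldsymbol{c},\boldsymbol{z}\rangle^2/\sum_i c_i$.

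The gap is in the reduction step. You leave a ``remaining range'' $\rho\in((1-p^{-m})e^{-M},\rho^*)$ open and then offer only a continuation sketch or a weakened statement. That range is empty. Lemma~\ref{lem:smallest_non-zero_eigenvalue_-A} gives $\theta_{p,m}=\min\{-(1-p^{-1})\lambda_1,\,1-p^{-m}\}\le 1-p^{-m}$, so $\rho^*=\theta_{p,m}e^{-M}\le(1-p^{-m})e^{-M}$. Hence Theorem~\ref{thm:refined_radial_symmetric_and_monotone} already makes every solution with $\rho\in(0,\rho^*)$ radially symmetric, and therefore monotone and bounded by $M$. This one-line inequality is exactly how the paper's proof begins, and with it your argument is complete.

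Without that inequality, your fallback would not have worked. The trivial-kernel property you invoke is only known at solutions that already satisfy $\rho e^{\max u}<\theta_{p,m}$. So continuation from small $\rho$ cannot exclude a second branch entering through a fold. Also, $\sum_ie^{u_i}=p^d$ only gives the $d$-dependent bound $u_i\le d\ln p$, not the bound $M$ you need.
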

\begin{proof}
   As $\theta_{p,m}\leq(1-p^{-m})$ by Lemma \ref{lem:smallest_non-zero_eigenvalue_-A}, then the solution must be radially symmetric. If $\boldsymbol{u}$ and $\boldsymbol{v}$ both solve \eqref{eq:matrix_equation}. Then
    \begin{align*}
        \boldsymbol{A}(\boldsymbol{u}-\boldsymbol{v})+\rho(e^{\boldsymbol{u}}-e^{\boldsymbol{v}})=\boldsymbol{0}.
    \end{align*}
    By the mean value theorem applied for each component, there exists $\boldsymbol{\xi}$ such that
    \begin{align*}
        (\boldsymbol{A}+\rho\text{diag}e^{\boldsymbol{\xi}})(\boldsymbol{u}-\boldsymbol{v})=\boldsymbol{0}.
    \end{align*}
    Theorem \ref{thm:refined_radial_symmetric_and_monotone} gives $e^{\xi_i}\leq e^M$, which is independent of $\rho$ and $d$. Besides, as $\boldsymbol{A}$ is semi-negative definite and
    \begin{align*}
        \rho e^{\xi_i}<\rho^*e^M=\theta_{p,m},
    \end{align*}
    then $\rho e^{\xi_i}$ does not exceed the first non-zero eigenvalue of $\boldsymbol{A}$. Then $(\boldsymbol{A}+\rho\text{diag}e^{\boldsymbol{\xi}})$ is invertible, then we get the uniqueness.
\end{proof}

\begin{remark}
    The order of $\rho^*$ is optimal. Since $\sum_{i}e^{u_i}=p^{d}$, we have $m(p-1)p^{d-1}e^M\geq p^d$, therefore $M>\ln\left(\frac{p}{m(p-1)}\right)$, which means
    \begin{align*}
        \rho^*<\frac{m(p-1)}{p}\theta_{p,m}=\theta_{p,m}V.
    \end{align*}
    On the other hand, since
    \begin{align*}
        M<\begin{cases}
        \ln\left(\frac{p}{p-2}\right),\ &p>2;\\
        \ln 4,&p=2,
    \end{cases}
    \end{align*}
    which means
    \begin{align*}
        \rho^*>\begin{cases}
        \frac{1}{4}\theta_{2,m}, &p=2;\\
    (1-2p^{-1})\theta_{p,m},\ &p>2.
    \end{cases}
    \end{align*}
\end{remark}

\section{Mean field equation on the Tate curve}
Since the finite quotient converges to the Tate curve as $d\rightarrow\infty$, we investigate whether the solutions converge accordingly. From the numerical experiments, we observe that the solution on the finite quotient is convergent. Moreover, the limiting solution retains properties like uniqueness and monotonicity.

\subsection{Convergence to the Tate curve}

\begin{theorem}
     When $\rho\in(0,\rho^*)$, then there exists a solution $u$ satisfying \eqref{eq:mean_field_equation_Tate_curve}.
\end{theorem}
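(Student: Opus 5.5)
The plan is to obtain a solution on $\mathbb{Q}_p^{\times}/p^{m\mathbb{Z}}$ as a limit of the finite-quotient solutions $u^{(d)}$ of \eqref{eq:mean_field_equation_finite_quotient}, extended to locally constant functions on $\mathbb{Q}_p^{\times}/p^{m\mathbb{Z}}$ (constant on cosets of $1+p^d\mathbb{Z}_p$). Fix $\rho\in(0,\rho^*)$. By Theorem \ref{thm:refined_radial_symmetric_and_monotone}, for every $d$ the solution $u^{(d)}$ is radially symmetric and strictly monotone, with $\max u^{(d)}\le M$ independent of $d$. Radial symmetry means $u^{(d)}$ is determined by the finitely many values $u_k$ for $k\neq l=v_p(y)$, together with the chain $u_l>u_l(c_0)>\dots>u_l(c_0,\dots,c_{d-1})$ indexed by the depth $s$ of agreement with $y$. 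Consequently I would work with the sequence of values $a^{(d)}_k$ and $a^{(d)}_{l,s}:=u_l^{(d)}(c_0,\dots,c_{s})$, where $s=-1,0,1,\dots$ and $a^{(d)}_{l,d-1}=u^{(d)}(y)$.

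The first step is uniform two-sided bounds on each fixed-level value, that is, on $a^{(d)}_{l,s}$ for fixed $s$ as $d\to\infty$, and on the $a^{(d)}_k$. The upper bound $M$ is already available. For the lower bound I would use the identity $\sum_i e^{u_i}=p^d$ from Lemma \ref{lem:rough_upperbound}, normalized as $\int e^{u^{(d)}}d\mu^\times=1$. Together with monotonicity and $u\le M$, this forces the outermost values $u_k$, $u_l$, $u_l(c_0)$ to be bounded below: otherwise almost all the mass $1$ would sit on sets of measure tending to zero, which contradicts $e^{u}\le e^M$. Then I would propagate the lower bound inward level by level using the radial equations \eqref{eq:equation_radial_symmetric_up_to_s}. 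Each such equation expresses $u_l(c_0,\dots,c_s)$ as a weighted combination of neighboring levels plus a bounded term $\rho e^{u}$, and the weights $p^{d-1+j}$ and $p^{2(s+1)}p^{d-2-j}$ all carry the common factor $p^{d}$, which divides out. So for fixed $s$ the equation is a $d$-uniform relation, once I control the tail quantity $p^{2(s+1)}\bigl(\sum_{j>s}(p-1)p^{d-2-j}u_l(c_0,\dots,c_j)+u(y)\bigr)p^{-d}$. This tail equals $p^{2(s+1)}$ times the integral of $u$ over the ball $B_s$ of radius $p^{-(s+1)}$ around $y$, and I would bound it using the Green's function representation $u=\rho G(\cdot,y)-\rho\int G e^{u}+{\rm const}$ on each finite quotient. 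In that representation $G$ is the finite-sum formula from Section 2, whose ball averages are uniformly bounded.

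With these bounds in hand, a diagonal argument gives a subsequence along which $a^{(d)}_k\to a_k$ and $a^{(d)}_{l,s}\to a_{l,s}$ for every fixed $s$. I would define $u$ to be the radial function with these values; $u$ is locally constant away from $y$. Monotonicity passes to the limit non-strictly, and the same argument as in the closedness step of Theorem \ref{thm:refined_radial_symmetric_and_monotone}, via $f_s$, upgrades it to strict monotonicity. Next I would pass to the limit in the equation at each $x\neq y$. The finite-quotient operator applied to the extension of $u^{(d)}$ agrees with $D$ from \eqref{eq:compute_Du}, and the only $d$-dependent contribution is the tail near $y$, which converges because the kernel $|z-x|_p^{-2}$ is bounded away from $x$ and the functions are uniformly integrable near $y$. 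So $Du+\rho e^u=0$ on $\mathbb{Q}_p^\times/p^{m\mathbb{Z}}\setminus\{y\}$.

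Finally, to identify the singular part, I would test against the constant function. Since $\int e^{u^{(d)}}=1$ passes to the limit by dominated convergence ($e^u\le e^M$), the distribution $Du+\rho e^{u}$ is supported at $\{y\}$ and has total mass $\rho$, hence equals $\rho\delta_y$, exactly as in the verification of the Green's function in Section 2. The main obstacle I expect is the uniform control of the tail near $y$, namely $u^{(d)}(y)\to-\infty$ at the correct logarithmic rate $\approx\rho G(\cdot,y)$. If that control fails, I would instead write $u^{(d)}=\rho G_d(\cdot,y)+w^{(d)}$ with $G_d$ the finite-quotient Green's function. Then $w^{(d)}$ solves an equation with bounded right-hand side $\rho(e^{u^{(d)}}-1/V)$, and the spectral gap $\theta_{p,m}$ of $-D$ bounds $w^{(d)}$ in $L^2$ and hence, through the finite-sum structure, pointwise at each level. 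This yields the compactness directly.
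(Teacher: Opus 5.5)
Your overall architecture is sound, and it uses the same two ingredients as the paper: the uniform bound $u^{(d)}\le M$ coming from radial symmetry and monotonicity, and the splitting $u^{(d)}=\rho G_d(\cdot,y)+w^{(d)}$ to control the singularity at $y$. However, the step that produces the uniform \emph{lower} bounds fails as written.

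The identity $\int e^{u^{(d)}}d\mu=1$, together with $u^{(d)}\le M$ and monotonicity, only shows that the \emph{largest} of the outer values $u_k$ $(k\neq l)$ and $u_l$ is at least $-\ln V$. It says nothing about each of them separately, and nothing at all about $u_l(c_0)$, since monotonicity gives only upper bounds there. If one of these values tends to $-\infty$, the unit mass simply sits on the remaining strata and levels, whose measure is bounded below, so there is no contradiction with $e^{u}\le e^{M}$.

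Your inward propagation then relies on the tail bound, which you take from $u=\rho G(\cdot,y)-\rho\int G e^{u}+\mathrm{const}$. But that constant is the mean $\overline{u^{(d)}}$, and you never bound it from below. The fallback has the same hole: the spectral gap $\theta_{p,m}$ controls $w^{(d)}-\overline{w^{(d)}}$ in $L^2$, not $w^{(d)}$ itself.

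The missing step is exactly what the paper supplies, in two parts.
\begin{itemize}
\item First, bound the oscillation uniformly in $L^\infty$. Since $D_dw^{(d)}=\rho(1/V-e^{u^{(d)}})$ is bounded by $\rho(e^M+1/V)$, you get $|w^{(d)}(x)-\overline{w^{(d)}}|\le\rho(e^M+1/V)\sup_x\|G_d(x,\cdot)\|_{L^1}\le C$, because the logarithmic singularity of $G$ is integrable.
\item Second, apply the mass identity to the splitting: $1=e^{\overline{w^{(d)}}}\int e^{\rho G_d}e^{w^{(d)}-\overline{w^{(d)}}}d\mu_d$. Together with $C_1\le\int e^{\rho G_d}d\mu_d\le C_2$, this bounds $\overline{w^{(d)}}$ on both sides.
\end{itemize}
Alternatively, your correct weak statement also works: some outer point $x_0$ has $u^{(d)}(x_0)\ge-\ln V$, and evaluating the Green representation at $x_0$ bounds the mean from below. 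Once you have $|u^{(d)}-\rho G_d(\cdot,y)|\le C$ uniformly in $d$, the level-by-level propagation is unnecessary. The bounds at every fixed level, and the uniform integrability near $y$, follow at once from $G_d\to G$ pointwise with $|\widetilde{G_d}(\cdot,y)|\le|G(\cdot,y)|\in L^1$.

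With that repaired, your argument differs from the paper's only in how the limit is taken.
\begin{itemize}
\item The paper works weakly. It extracts weak-$*$ limits of $e^{u_d}\mu_d$ and $D_du_d\,\mu_d$ by Banach--Alaoglu. It moves $D_d$ onto locally constant test functions by self-adjointness, using $D_d\phi=D\phi$ for large $d$. It keeps $\rho\phi(y)$ on the right-hand side throughout, so the Dirac mass is carried to the limit automatically.
\item You extract the countably many level values by a diagonal argument and pass to the limit pointwise at each $x\neq y$. This is legitimate, since $D_d$ is exactly $D$ restricted to functions constant on cosets of $1+p^d\mathbb{Z}_p$, and the integrand vanishes near $x$. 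You then recover $\rho\delta_y$ from the fact that $Du+\rho e^u$ is supported at $y$ with total mass $\rho$. This needs the extra, valid observation that a distribution supported at a point is a multiple of $\delta_y$, which holds because test functions are locally constant.
\end{itemize}
Your version is more concrete. The paper's avoids that observation and needs only integrated, not pointwise, convergence of $D\widetilde{u_d}$. The strict-monotonicity upgrade in the limit is not needed for existence.
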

\begin{proof}
    For convenience, we denote $X_m=\mathbb{Q}_{p}^{\times}/p^{m\mathbb{Z}}$ with the Haar measure $d\mu=\frac{dx}{|x|_p}$ and $X_{m,d}=\mathbb{Q}_{p}^{\times}/p^{m\mathbb{Z}}/(1+p^d\mathbb{Z}_p)$ with the Haar measure $d\mu_d=p^{-d}d\delta$, which can be extended to the measure $\widetilde{\mu_d}$ on $X_m$. Clearly, $\widetilde{\mu_d}\overset{*}{\rightharpoonup}\mu$ as $d\rightarrow\infty$. Let $D_d$ denote the operator on the finite quotient. Consider
    \begin{align}
    \label{eq:mean_field_equation_on_Xmd}
        D_du_d+\rho e^{u_d}=\rho p^de_y\ \text{in}\ X_{m,d}
    \end{align}
    with
    \begin{align}
    \label{eq:int_e^ud=1}
        \int_{X_{m,d}}e^{u_d}d\mu_d=1.
    \end{align}
    Since $D_du_d=\rho p^de_y-\rho e^{u_d}$, we have $|D_du_d|\leq\rho p^de_y+\rho e^{u_d}$, then
    \begin{align}
    \label{eq:int_|Ddud|_upper_bound}
        \int_{X_{m,d}}|D_du_d|d\mu_d\leq2\rho.
    \end{align}
    Define measures $\nu_d=e^{u_d}\mu_d$ and $\omega_d=D_du_d\mu_d$ on $X_{m,d}$. As $X_{m,d}\hookrightarrow X_m$, so they can be seen as the restriction of measure $\widetilde{\nu_d}$ and $\widetilde{\omega_d}$ on $X_m$, i.e. $\widetilde{\nu_d}|_{X_{m,d}}=\nu_d$ and $\widetilde{\omega_d}|_{X_{m,d}}=\omega_d$. On the other hand, the function on $X_{m,d}$ can extend to a locally constant function on $X_m$.

    Since $X_m$ is compact, we have $C(X_m)^*=\mathscr{M}(X_m)$, so by the Banach-Alaoglu theorem, $\mathscr{M}(X_m)$ is weak-* compact. Then from \eqref{eq:int_e^ud=1} and \eqref{eq:int_|Ddud|_upper_bound}, we have $||\widetilde{\nu_d}||_{\mathscr{M}}=1$ and $||\widetilde{\omega_d}||_{\mathscr{M}}\leq2\rho$, so there exists $\nu$, $\omega\in \mathscr{M}(X_m)$ and a subsequence $d_k$ such that
    \begin{align*}
        \widetilde{\nu_{d_k}}\overset{*}{\rightharpoonup}\nu,\ \widetilde{\omega_{d_k}}\overset{*}{\rightharpoonup}\omega
    \end{align*}
    as $\nu_d<<\mu_d$ and $\frac{d\nu_d}{d\mu_d}=e^{u_d}\leq e^M$ uniformly in $d$ by Theorem \ref{thm:refined_radial_symmetric_and_monotone}, we have $\nu<<\mu$, denoting $\nu=e^{\hat{u}}\mu$ for some function $\hat{u}$.

    Taking any locally constant function $\phi$ on $X_m$, when $d$ is large enough, $\phi$ is well-defined on $X_{m,d}$ as $X_m$ is compact. Multiply \eqref{eq:mean_field_equation_on_Xmd} by $\phi$ and integrate over $X_{m,d}$, we have
    \begin{align*}
        \int_{X_{m,d}}(D_du_d)\phi d\mu_d+\rho\int_{X_{m,d}}e^{u_d}\phi d\mu_d=\rho\phi(y),
    \end{align*}
    and it can be rewritten as
    \begin{align*}
        \int_{X_{m}}\phi d\widetilde{\omega_{d}}+\rho\int_{X_{m}}\phi d\widetilde{\mu_{d}}=\rho\phi(y).
    \end{align*}
    By taking a limit of the subsequence $d_k$, we get
    \begin{align}
    \label{eq:measure_equation_on_Xm}
        \int_{X_{m}}\phi d\omega+\rho\int_{X_{m}}\phi d\nu=\rho\phi(y).
    \end{align}
    
    Decompose $u_d=v_d+\rho G_d$, where $G_d$ is the Green's function on finite quotient, then $v_d$ satisfies
    \begin{align*}
        D_dv_d=-\rho e^{u_d}+\frac{\rho}{V}=-\rho e^{\rho G_d}e^{v_d}+\frac{\rho}{V},
    \end{align*}
    so
    \begin{align*}
        v_d(x)-\overline{v_d}=\int_{X_{m,d}}G_d(x,y)\left(-\rho e^{u_d(y)}+\frac{\rho}{V}\right)d\mu_d(y),
    \end{align*}
    where
    \begin{align*}
        \overline{v_d}=\frac{1}{|X_{m,d}|}\int_{X_{m,d}}v_d(y)d\mu_d(y)=\frac{1}{m(1-p^{-1})}\int_{X_{m,d}}v_d(y)d\mu_d(y).
    \end{align*}

    Since
    \begin{align}
    \label{eq:Green's_function_series_form}
    G(x,y)=\lambda_0\text{log}_pd(x,y)+\sum_{n=0}^{\infty}\lambda_nd(x,y)^n+C(|x|_p,|y|_p)
    \end{align}
    and $G_d(x,y)\rightarrow G(x,y)$ pointwise in $X_m\backslash\{y\}$ \cite{huang2025greensfunctiontatecurve}, then for $\epsilon=|x|_pp^{-N}$, we decompose
    \begin{align*}
        \int_{X_{m}}|G(x,y)|d\mu(y)=\int_{B_{\epsilon}(x)}|G(x,y)|d\mu(y)+\int_{X_{m}\backslash B_{\epsilon}(x)}|G(x,y)|d\mu(y).
    \end{align*}
     And we have
    \begin{align*}
         \int_{B_{\epsilon}(x)}\text{log}_pd(x,y)d\mu(y)=&\int_{x+p^{v_p(x)+N}\mathbb{Z}_p}\text{log}_pd(x,y)d\mu(y)\\
         =&\int_{\mathbb{Z}_p}\text{log}_p(p^{-N}|t|_p)p^{-N}dt\\
         =&-\left(N+\frac{1}{p-1}\right)p^{-N}
    \end{align*}
    and
    \begin{align*}
        \int_{B_{\epsilon}(x)}d(x,y)^nd\mu(y)=&\int_{x+p^{v_p(x)+N}\mathbb{Z}_p}d(x,y)^nd\mu(y)\\
         =&\int_{\mathbb{Z}_p}(p^{-N}|t|_p)^np^{-N}dt\\
         =&\frac{p^n(p-1)}{p^{n+1}-1}p^{-N(n+1)}.
    \end{align*}
    So we have
    \begin{align*}
        \int_{B_{\epsilon}(x)}|G(x,y)|d\mu(y)\leq CNp^{-N}\leq C'\epsilon|\ln\epsilon|,
    \end{align*}
    thus $G\in L^1(X_m)$. Additionally, as $-\rho e^{u_d(y)}+\frac{\rho}{V}$ is bounded and $|\widetilde{G_d}(\cdot,y)|\leq |G(\cdot,y)|$ \cite{huang2025greensfunctiontatecurve}, we have
    \begin{align*}
        |v_d(x)-\overline{v_d}|\leq C||G_d(\cdot,y)||_{L^1(X_{m,d})}=C||\widetilde{G_d}(\cdot,y)||_{L^1(X_{m})}\leq C||G(\cdot,y)||_{L^1(X_{m})}.
    \end{align*}
    Then from \eqref{eq:int_e^ud=1}, we have
    \begin{align*}
        1=\int_{X_{m,d}}e^{u_d}d\mu_d=e^{\overline{v_d}}\int_{X_{m,d}}e^{\rho G_d}e^{v_d-\overline{v_d}}d\mu_d,
    \end{align*}
    since
    \begin{align*}
        e^{-C}\leq e^{v_d-\overline{v_d}}\leq e^{C}
    \end{align*}
    and
    \begin{align*}
        C_1\leq\int_{X_{m,d}}e^{\rho G_d}d\mu_d\leq C_2
    \end{align*}
    by \eqref{eq:Green's_function_series_form}, so $\overline{v_d}\leq C$ uniformly in $d$. Therefore, $v_d\in L^{\infty}(X_{m,d})$ as $|v_d|\leq|v_d-\overline{v_d}|+|\overline{v_d}|$, thus $\widetilde{v_d}\in L^{\infty}(X_{m})$.

    Using the self-adjointness of $D$, we have
        \begin{align*}
            \int_{X_{m,d}}(D_du_d)\phi d\mu_d=&\int_{X_{m}}\widetilde{D_du_d}\phi d\widetilde{\mu_d}\\
            =&\int_{X_{m}}(D_d\widetilde{u_d})\phi d\widetilde{\mu_d}\\
            =&\int_{X_{m}}(D_d\phi)\widetilde{u_d} d\widetilde{\mu_d}\\
            =&\int_{B_{\epsilon}(y)}(D_d\phi)\widetilde{u_d} d\widetilde{\mu_d}+\int_{X_{m}\backslash B_{\epsilon}(y)}(D_d\phi)\widetilde{u_d} d\widetilde{\mu_d}\\
            =&\mathrm{I}+\mathrm{II},
        \end{align*}
    where
        \begin{align*}
            |\mathrm{I}|=&\left|\int_{B_{\epsilon}(y)}(D_d\phi)\widetilde{u_d} d\widetilde{\mu_d}\right|\\
            \leq&\rho\int_{B_{\epsilon}(y)}|D_d\phi(x)||\widetilde{G_d}(x,y)|d\widetilde{\mu_d}(x)+\int_{B_{\epsilon}(y)}|D_d\phi||\widetilde{v_d}|d\widetilde{\mu_d}\\
            \leq& C_1\epsilon|\ln\epsilon|+C_2\epsilon
        \end{align*}
    and we have
    \begin{align*}
        \mathrm{II}=\int_{X_{m}\backslash B_{\epsilon}(y)}(D_d\phi)\widetilde{u_d} d\widetilde{\mu_d}=\int_{X_{m}\backslash B_{\epsilon}(y)}(D\phi)\widetilde{u_d} d\widetilde{\mu_d}
    \end{align*}
    for sufficiently large $d$.

    And in $X_{m}\backslash B_{\epsilon}(y)$, since $\widetilde{u_d}(x)$ depend only on $|x|_p$, $|y|_p$ and $|x-y|_p$, so $\forall\ x_0\in X_{m}\backslash B_{\epsilon}(y)$, $\exists\ r_0(x_0)<\min\{|x_0|_p,|x_0-y|_p\}$, s.t. $\forall\ x_0\in B(x_0,r_0(x_0))$, we have $|x|_p=|x_0|_p$, $|x-y|_p=|x_0-y|_p$. So $\widetilde{u_d}(x)$ is locally constant on $B(x_0,r_0(x_0))$. Since $X_{m}\backslash B_{\epsilon}(y)$ is compact, so $X_{m}\backslash B_{\epsilon}(y)=\bigcup_{i=1}^{K}B(x_i,r_i(x_i))$. Taking $\delta=\min\{r_1(x_1),\dots,r_K(x_K)\}$, then $\forall\ x,z\in X_{m}\backslash B_{\epsilon}(y)$ with $|x-z|_p<\delta$, we have $\widetilde{u_d}(x)=\widetilde{u_d}(z)$ as balls on $p$-adic field are either disjoint or concentric. And we notice that $|\widetilde{u_d}|\leq C_{\epsilon}$, so there exists a subsequence $\widetilde{u_{d_k}}$ converges to a locally constant function $u$ on $X_{m}\backslash B_{\epsilon}(y)$ such that
    \begin{align*}
        \int_{X_{m}\backslash B_{\epsilon}(y)}(D\phi)\widetilde{u_{d_k}} d\widetilde{\mu_{d_k}}=\int_{X_{m}\backslash B_{\epsilon}(y)}(D\phi)(\widetilde{u_{d_k}}-u) d\widetilde{\mu_{d_k}}+\int_{X_{m}\backslash B_{\epsilon}(y)}(D\phi)u d\widetilde{\mu_{d_k}}.
    \end{align*}
    The first term can be controlled by $||\widetilde{u_{d_k}}-u||_{L^{\infty}(X_{m}\backslash B_{\epsilon}(y))}$, so
    \begin{align*}
        \mathrm{II}\rightarrow \int_{X_{m}\backslash B_{\epsilon}(y)}(D\phi)u d\mu.
    \end{align*}
    Letting $\epsilon\rightarrow0$ and by the standard diagonal argument, we obtain a convergent function $u$, so
    \begin{align*}
        \int_{X_{m}}\widetilde{D_{d_k}u_{d_k}}\phi d\widetilde{\mu_{d_k}}\rightarrow\int_{X_{m}}(D\phi)u d\mu=\int_{X_{m}}(Du)\phi d\mu
    \end{align*}
    as $k\rightarrow\infty$. Hence $\omega=Du\mu$ in the sense of distributions.

    What's more, since
    \begin{align*}
        \int_{X_m}\phi d\nu=\lim_{k \to\infty}\int_{X_{m}}\phi e^{\widetilde{u_{d_k}}}d\widetilde{\mu_{d_k}}=\int_{X_m}\phi e^{u}d\mu,
    \end{align*}
    and by definition, we have
    \begin{align*}
        \int_{X_m}\phi d\nu=\int_{X_m}\phi e^{\hat{u}}d\mu,
    \end{align*}
    so $\hat{u}=u$ for a.e. $x$. Then by \eqref{eq:measure_equation_on_Xm} we get
    \begin{align*}
        \int_{X_{m}}(Du)\phi d\mu+\rho\int_{X_m}e^{u}\phi d\mu=\rho\phi(y).
    \end{align*}
    Lastly, as $u(x)$ is locally constant on $X_m\backslash\{y\}$, thus it satisfies \eqref{eq:mean_field_equation_Tate_curve}.
\end{proof}

Since the solution we found converges from the solution on finite quotient, it inherits the properties of the solution on finite quotient. So the solution $u$ has those radially symmetric and monotone properties with a universal upper bound $M$.

When $\rho\geq\rho^*$, then there may exist non-monotone radially symmetric solutions, let alone a non-radially symmetric one, so that $u$ will not have the universal upper bound, this may prevent the convergence of solutions on the finite quotient, and hence the existence cannot be guaranteed.

\subsection{Uniqueness of the solution}
Although the solution on finite quotient is unique when $\rho\in(0,\rho^*)$, we can't ensure uniqueness on the Tate curve as there may has another solution on the Tate curve but vanishes on finite quotient. We now address uniqueness on the Tate curve by a standard method \cite{MR1283323}. First, we show that the solution is unique when $\rho\leq\varepsilon$. Next, we prove the linearized solution is non-degenerate under the same constraints of $\rho$, then we can get the uniqueness.

\begin{lemma}
    \label{lem:uniqueness_small_rho}
    There exists $\varepsilon>0$, such that when $\rho\leq\varepsilon$, then \eqref{eq:mean_field_equation_Tate_curve} has a unique solution.
\end{lemma}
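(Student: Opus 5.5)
Our plan is to reduce the mean field equation to an equation for a bounded regular part, and then show that for small $\rho$ any two solutions must coincide by a contraction/linearization argument.

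\textbf{Step 1: reduce to the regular part.} Write any solution of \eqref{eq:mean_field_equation_Tate_curve} as $u=\rho G(\cdot,y)+v$, with $G$ the Green's function of Section~2. Since $DG=\delta_y-\frac1V$, the regular part satisfies
\begin{align*}
Dv+\rho\left(e^{\rho G}e^{v}-\frac1V\right)=0 .
\end{align*}
Integrating over $X_m$ gives $\int_{X_m}e^{u}d\mu=1$.

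\textbf{Step 2: a priori bounds on $v$, uniform for $\rho\le 1$.} Split $v=\bar v+w$, where $\bar v$ is the mean of $v$ and $\int w\,d\mu=0$. Green's representation gives
\begin{align*}
w(x)=-\rho\int_{X_m}G(x,z)\left(e^{u(z)}-\frac1V\right)d\mu(z).
\end{align*}
We then bound $w$ in two steps.

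First, use the a priori estimate $\|e^u\|_{L^1}=1$ together with the $L^1$ control of $G$. The latter comes from the logarithmic expansion \eqref{eq:Green's_function_series_form}, and we need it in the form $\sup_x\|G(x,\cdot)\|_{L^q}<\infty$ for every $q<\infty$. With these, a Brezis--Merle type argument yields $e^{|w|}\in L^{q}$ uniformly for small $\rho$.

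Alternatively, use the finite-sum structure of $G$. Since $G(x,\cdot)$ is bounded away from the diagonal, $\|w\|_\infty\le C\rho\,(1+\|e^u\|_{L^\infty})$, and we bootstrap: $e^{u}=e^{\rho G}e^{\bar v}e^{w}$. Here $e^{\rho G}$ is bounded above because $G\to-\infty$ logarithmically at $y$, and $e^{\bar v}$ is fixed by the normalization $\int e^u=1$. This makes $\|e^u\|_\infty$ bounded, hence $\|w\|_\infty\le C\rho$.

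The normalization also gives $e^{\bar v}=\left(\int e^{\rho G+w}\right)^{-1}$, which is bounded above and below uniformly for $\rho\le 1$ by the integrability of $e^{\rho G}$. Conclusion: every solution satisfies $\|e^{u}\|_{L^\infty}\le C_0$, with $C_0$ independent of $\rho\le1$.

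\textbf{Step 3: uniqueness for small $\rho$.} Take two solutions $u_1,u_2$. Their difference $\psi=u_1-u_2=v_1-v_2$ is bounded and satisfies
\begin{align*}
D\psi+\rho\, e^{\xi}\psi=0,
\end{align*}
where $\xi$ lies between $u_1$ and $u_2$, so $e^{\xi}\le C_0$.

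Test the equation against $\psi$. By self-adjointness, $-\langle D\psi,\psi\rangle\ge\theta_{p,m}\|\psi-\bar\psi\|^2$, where $\theta_{p,m}$ is the smallest nonzero eigenvalue of $-D$ from Theorem~\ref{thm:spectrum_D_general_m}. Integrating the equation gives $\int e^{\xi}\psi=0$, which controls $\bar\psi$ by $\psi-\bar\psi$: splitting $\psi=\bar\psi+(\psi-\bar\psi)$, we get $|\bar\psi|\int e^\xi\le C_0\|\psi-\bar\psi\|_{L^1}$. Combining,
\begin{align*}
\theta_{p,m}\|\psi-\bar\psi\|^2\le\rho\int e^{\xi}\psi^2\le \rho C\|\psi-\bar\psi\|^2 .
\end{align*}
For $\rho<\varepsilon:=\theta_{p,m}/C$ this forces $\psi\equiv\bar\psi$, and then $\int e^{\xi}\psi=0$ gives $\psi=0$. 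Existence is already provided by the preceding theorem for $\rho<\rho^*$.

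\textbf{Main obstacle.} The hardest step is the lower bound $\int e^{\xi}\gtrsim1$ needed in Step 3. Since $u\to-\infty$ at $y$, we need a uniform lower bound on $e^{u_i}$ away from a small ball around $y$. We would obtain it from the $L^\infty$ bound on $w$ together with the lower bound on $\bar v$, using that $G$ is bounded below off a neighbourhood of $y$.
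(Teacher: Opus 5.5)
Your proof is correct, but it takes a different route from the paper. The paper argues by contradiction. It takes $\rho_n\to0$ with two distinct solutions and normalizes $\phi_n=(u_n-v_n)/\|u_n-v_n\|_{L^\infty}$. It then observes $D\phi_n+V_n\phi_n=0$ with $V_n\to0$, using $u\le M$ from Theorem~\ref{thm:refined_radial_symmetric_and_monotone}. Passing to a constant limit contradicts the sign change of $\phi_n$ forced by $\int e^{u_n}d\mu=\int e^{v_n}d\mu=1$.

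You instead derive an a priori bound $\|e^u\|_{L^\infty}\le C_0$ for \emph{every} solution, from Green's representation and exponential integrability. You then close with a coercivity estimate based on the spectral gap $\theta_{p,m}$. This buys you an explicit $\varepsilon$. More importantly, it justifies the uniform bound on $e^{u}$ for an arbitrary second solution on $X_m$. The bound $u\le M$ that the paper quotes is only established for radially symmetric monotone solutions on the finite quotients and for the limit built from them, which a hypothetical extra solution need not be.

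The paper's route, in turn, sidesteps your ``main obstacle''. Since $e^{\xi}>0$ and $\int e^{\xi}\psi\,d\mu=0$, $\psi$ changes sign, so $|\bar\psi|\le\|\psi-\bar\psi\|_{L^\infty}$. Green's representation gives $\|\psi-\bar\psi\|_{L^\infty}\le\rho C_0\sup_x\|G(x,\cdot)\|_{L^1}\|\psi\|_{L^\infty}$. Together these give $\|\psi\|_{L^\infty}\le 2\rho C_0\sup_x\|G(x,\cdot)\|_{L^1}\|\psi\|_{L^\infty}$, hence $\psi=0$ for small $\rho$, with no lower bound on $\int e^{\xi}$ needed. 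That lower bound is easy anyway, since Jensen gives $\int e^{\rho G}d\mu\ge Ve^{\rho\bar G}$.

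Two small corrections to Step 2.
\begin{itemize}
\item The finite-sum ``alternative'' bootstrap is circular on its own. With $t=\|w\|_{L^\infty}$, your inequalities reduce to something of the form $t\le C\rho(1+Ce^{2t})$, which every sufficiently large $t$ also satisfies. So you must start from the Brezis--Merle $L^q$ bound and then apply H\"older with $G(x,\cdot)\in L^{q'}$ to reach $\|w\|_{L^\infty}\le C\rho$.
\item That exponential integrability needs $\rho$ below a threshold set by the logarithmic coefficient of $G$. So $C_0$ is uniform for $\rho\le\rho_0$ small, not for all $\rho\le1$. The upper bound on $e^{\bar v}$ is immediate from Jensen, since $\int w\,d\mu=0$.
\end{itemize}
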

\begin{proof}
    Let $u_n$, $v_n$ be distinct solutions of \eqref{eq:mean_field_equation_Tate_curve} with $\rho_n\rightarrow0$. Since $\int_{\mathbb{Q}_{p}^{\times}/p^{m\mathbb{Z}}} e^u\frac{dx}{|x|_p}=1$, the difference $u_n-v_n$ must change sign. Define
    \begin{align*}
        \phi_n=\frac{u_n-v_n}{\|u_n-v_n\|_{L^{\infty}}}.
    \end{align*}
    Then $\phi_n$ satisfies
    \begin{align*}
        D\phi_n+V_n(x)\phi_n=0,
    \end{align*}
    where $V_n(x)=\rho_n\frac{e^{u_n}-e^{v_n}}{u_n-v_n}$. Since $u(x)\leq M$ by Theorem \ref{thm:refined_radial_symmetric_and_monotone}, $V_n(x)$ converges to $0$. And it is easy to see that $\phi_n$ uniformly converges to the eigenfunction of $D$ with the eigenvalue $0$, which is the harmonic function on $\mathbb{Q}_{p}^{\times}/p^{m\mathbb{Z}}$. As $\mathbb{Q}_{p}^{\times}/p^{m\mathbb{Z}}$ is compact without boundary, then the harmonic function on it must be a constant function, which contradicts the fact that $\phi_n$ must change the sign.
\end{proof}

Next, we consider the linearized equation of \eqref{eq:mean_field_equation_Tate_curve} as
\begin{align}
\label{eq:linearized_equation}
    Df+\rho e^uf=0.
\end{align}
If \eqref{eq:linearized_equation} is nondegenerate for $\rho\leq\rho^*$, then Lemma \ref{lem:uniqueness_small_rho} implies the uniqueness of the solution for such $\rho$. To address it, we give the following Lemma:

\begin{lemma}
\label{lem:non-degenerate_operator}
    Let $L$ be an operator with eigenvalues $\lambda_1\leq \lambda_2\leq\cdots$, and $\exists\ k$ s.t. $\lambda_k<V(x)<\lambda_{k+1}$. Then the operator $L-V(x)$ is nondegenerate.
\end{lemma}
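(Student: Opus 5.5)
We read $L$ as self-adjoint (like $D$ on $L^2(\mathbb{Q}_p^{\times}/p^{m\mathbb{Z}},\frac{dx}{|x|_p})$, or the symmetric matrix $-\boldsymbol{A}$ on a finite quotient), with a complete orthonormal eigenbasis $\{\psi_j\}$, $L\psi_j=\lambda_j\psi_j$. We also take $V$ bounded and measurable, and the hypothesis $\lambda_k<V(x)<\lambda_{k+1}$ to hold uniformly: there is $\delta>0$ with $\lambda_k+\delta\le V(x)\le\lambda_{k+1}-\delta$ for a.e.\ $x$. In the finite-quotient setting this gap is automatic, since $V$ takes finitely many values; on the Tate curve it will come from the bound $u\le M$ of Theorem \ref{thm:refined_radial_symmetric_and_monotone}. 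The goal is to show that $(L-V)f=0$ forces $f=0$.

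Set $c=\frac{\lambda_k+\lambda_{k+1}}{2}$ and $r=\frac{\lambda_{k+1}-\lambda_k}{2}$, and write
\begin{align*}
L-V=(L-c)-(V-c).
\end{align*}
Because the spectrum of $L$ avoids $(\lambda_k,\lambda_{k+1})$, the operator $L-c$ is invertible and its inverse is diagonal in the eigenbasis with entries $(\lambda_j-c)^{-1}$. Every $\lambda_j$ lies at distance at least $r$ from $c$, so
\begin{align*}
\|(L-c)^{-1}\|_{L^2\to L^2}\le r^{-1}.
\end{align*}
Multiplication by $V-c$ satisfies $\|V-c\|_{L^\infty}\le r-\delta<r$.

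Now suppose $(L-V)f=0$. Then $(L-c)f=(V-c)f$, hence $f=(L-c)^{-1}(V-c)f$. Taking norms gives
\begin{align*}
\|f\|_{L^2}\le \frac{r-\delta}{r}\,\|f\|_{L^2},
\end{align*}
and since $\frac{r-\delta}{r}<1$ this forces $f=0$. So $L-V$ has trivial kernel.

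For the intended applications, invertibility (and not just injectivity) also follows: $I-(L-c)^{-1}(V-c)$ is invertible by a Neumann series, so $L-V=(L-c)\bigl(I-(L-c)^{-1}(V-c)\bigr)$ is invertible. For $D$ one applies this with $L=-D$ and $V=\rho e^u$. The relevant gap is $k=1$, $\lambda_1=0$, $\lambda_2=\theta_{p,m}$, and we need $0<\rho e^u<\theta_{p,m}$, which follows from $\rho<\rho^*$ and $u\le M$.

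The main obstacle is the gap assumption. Under only the pointwise strict inequality, $\operatorname{ess\,sup}|V-c|$ could equal $r$ and the contraction argument breaks. The fallback is a quadratic-form argument with the splitting $f=f_-+f_+$, where $f_-$ is the component in $\operatorname{span}\{\psi_j:j\le k\}$. Pairing $(L-V)f=0$ with $f_+-f_-$ gives
\begin{align*}
\langle Lf_+,f_+\rangle-\langle Lf_-,f_-\rangle=\langle Vf,f_+-f_-\rangle.
\end{align*}
The left side is at least $\lambda_{k+1}\|f_+\|^2-\lambda_k\|f_-\|^2$. The right side is bounded using the pointwise estimate $|V-c|<r$. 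One has to check that the cross terms $\langle (V-c)f_\pm,f_\mp\rangle$ do not destroy strictness; strictness survives because $f\neq0$ on a set of positive measure.
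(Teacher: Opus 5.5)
Your contraction argument is correct, but only under the uniform gap $\lambda_k+\delta\le V\le\lambda_{k+1}-\delta$ that you add to the hypotheses. Your claim that this gap holds in the paper's application on the Tate curve is false.

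There $L=-D$, $k=1$, $\lambda_1=0$ and $V=\rho e^u$. The bound $u\le M$ together with $\rho<\rho^*$ gives only the upper margin, $\theta_{p,m}-V\ge(\rho^*-\rho)e^M$. Nothing bounds $u$ from below. Indeed, $u-\rho G(\cdot,y)$ is bounded, as in the existence proof. Meanwhile the finite-sum formula gives $G(x,y)=-\frac{p(p-1)}{p+1}v_p(d(x,y))+O(1)\to-\infty$ as $x\to y$. So $\rho e^u\to0$ on balls of positive measure around $y$, and $\operatorname{ess\,inf}V=0=\lambda_1$.

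Hence $\|V-c\|_{L^\infty}=r$ exactly. This is the case you yourself flag as the obstacle, and for it you only sketch a quadratic-form argument whose decisive step (``strictness survives'') is asserted, not proved. Your justification of invertibility via the Neumann series rests on the same unavailable bound. As written, you prove a weaker lemma that misses the case the paper actually uses.

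The repair is short, and it shows that your remark that ``the contraction argument breaks'' is mistaken. To kill the kernel you never need an operator-norm bound on $V-c$, only a strict bound for the particular $f$. Since $|V-c|<r$ almost everywhere and $f\neq0$ on a set of positive measure,
\begin{align*}
r^2\|f\|_{L^2}^2-\|(V-c)f\|_{L^2}^2=\int\bigl(r^2-|V-c|^2\bigr)|f|^2\,d\mu>0,
\end{align*}
so $\|f\|\le r^{-1}\|(V-c)f\|<\|f\|$, a contradiction. The same inequality, combined with Cauchy--Schwarz and $\|f_+-f_-\|=\|f\|$, also completes your quadratic-form fallback. For $L=-D$, which has compact resolvent, invertibility then follows from injectivity without any Neumann series.

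With this fix, your resolvent argument is a legitimate alternative to the paper's proof. The paper instead compares eigenvalues through the form inequalities $L-\lambda_{k+1}<L-V<L-\lambda_k$ and min-max, obtaining $\lambda_i-\lambda_{k+1}<\mu_i<\lambda_i-\lambda_k$ and hence $\mu_k<0<\mu_{k+1}$. That route uses discreteness of the spectrum, so the min-max values are attained and the inequalities stay strict, and it yields the full sign pattern of the spectrum of $L-V$. Yours needs only the gap $(\lambda_k,\lambda_{k+1})$ in the spectrum of $L$ and gives injectivity directly.
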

\begin{proof}
    Suppose that the eigenvalue of $L-V(x)$ is $\mu_1\leq \mu_2\leq\cdots$. Then we have operator inequalities
    \begin{align*}
        L-\lambda_{k+1}<L-V(x)<L-\lambda_{k}.
    \end{align*}
    Then we have
    \begin{align*}
        \lambda_{i}-\lambda_{k+1}<\mu_i<\lambda_{i}-\lambda_{k}
    \end{align*}
    for all $i$. Taking $i=k$ and $i=k+1$, we get
    \begin{align*}
        \mu_k<0<\mu_{k+1},
    \end{align*}
    which means $0$ is not the eigenvalue of $L-V(x)$. Thus the operator $L-V(x)$ is nondegenerate.
\end{proof}

\begin{theorem}
When $\rho\in(0,\rho^*)$, then the solution of \eqref{eq:mean_field_equation_Tate_curve} is unique.
\end{theorem}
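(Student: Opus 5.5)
We argue by continuation in $\rho$, in the spirit of \cite{MR1283323}: uniqueness holds for small $\rho$ by Lemma \ref{lem:uniqueness_small_rho}, and it then propagates along $(0,\rho^*)$ because the linearized operator \eqref{eq:linearized_equation} never degenerates. Let
\begin{align*}
    S=\{\rho\in(0,\rho^*)\,|\,\text{\eqref{eq:mean_field_equation_Tate_curve} has exactly one solution}\}.
\end{align*}
By Lemma \ref{lem:uniqueness_small_rho}, $(0,\varepsilon]\subseteq S$, and by the existence theorem of Section 4 every $\rho\in(0,\rho^*)$ admits at least one solution, namely the radially symmetric monotone limit $u_\rho$ with $u_\rho\leq M$. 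We will show that $S$ is open and closed in $(0,\rho^*)$; connectedness then gives $S=(0,\rho^*)$.

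\textbf{Nondegeneracy.} Let $u$ be any solution with $\rho<\rho^*$, and assume $u\le M$. Suppose $f\neq0$ solves $Df+\rho e^{u}f=0$. Integrating against the constant function, and using self-adjointness of $D$ and $D1=0$, gives $\int e^uf\,d\mu=0$. Put $\bar f$ for the mean of $f$ and $g=f-\bar f\perp 1$. Pairing the equation with $f$ gives
\begin{align*}
    \langle -Dg,g\rangle=\rho\int e^{u}f^2\,d\mu .
\end{align*}
The left-hand side is at least $\theta_{p,m}\|g\|^2$ by the spectral gap of Theorem \ref{thm:spectrum_D_general_m}. The right-hand side is at most $\rho e^{M}\|f\|^2<\theta_{p,m}\|f\|^2$.

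To close the gap between $\|g\|$ and $\|f\|$, apply Lemma \ref{lem:non-degenerate_operator} with $L=-D$ and $k=1$, where $\lambda_1=0$ and $\lambda_2=\theta_{p,m}$. The hypothesis needed is $0<V(x):=\rho e^{u(x)}<\theta_{p,m}$ pointwise, which holds since $u\le M$ and $\rho e^{M}<\rho^*e^{M}=\theta_{p,m}$. The lemma then shows $D+\rho e^{u}$ has trivial kernel.

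\textbf{Openness and closedness.} For openness, the implicit function theorem applied to $F(\rho,u)=Du+\rho e^u-\rho\delta_y$ at the solution (working with $v=u-\rho G$, which lies in $L^\infty$) produces a locally unique smooth branch. For closedness, take $\rho_n\to\rho$ with solutions $u_n,v_n$. Both have uniform bounds: $u\le M$ together with the Green representation bound $|v-\bar v|\le C\|G\|_{L^1}$ from Section 4. Compactness then gives convergence, and any bifurcation of two distinct branches would produce a nontrivial kernel element $f=\lim (u_n-v_n)/\|u_n-v_n\|_\infty$, contradicting nondegeneracy.

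\textbf{Main obstacle.} The hardest step is ensuring that an \emph{arbitrary} solution on the Tate curve, not only the one constructed as a limit, satisfies $u\le M$. Without this bound, $V=\rho e^u$ may exceed $\theta_{p,m}$. First, we derive the bound directly on $X_m$ by repeating the argument of Theorem \ref{thm:radially_symmetric_small_rho}. Because $D$ acts on functions depending only on the first few digits exactly as $D_d$ does, the strata-by-digit monotonicity argument should force radial symmetry once $\rho\le(1-p^{-m})e^{-M}$, and $\rho^*\le\theta_{p,m}e^{-M}\le(1-p^{-m})e^{-M}$ by Lemma \ref{lem:smallest_non-zero_eigenvalue_-A}. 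Second, the continuity argument of Theorem \ref{thm:refined_radial_symmetric_and_monotone} transfers to yield monotonicity. Third, the counting argument of Lemma \ref{lem:refined_upperbound}, using $\int e^u\,d\mu=1$, gives $u\le M$.

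If the digit-by-digit argument does not pass cleanly to $X_m$, the fallback is to use the continuation itself. Along the unique branch starting from small $\rho$, the bound $u\le M$ is preserved by the closedness argument. Any second solution would then have to bifurcate from this branch, and bifurcation is excluded by nondegeneracy.
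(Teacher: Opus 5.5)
Your argument follows the paper's proof: uniqueness for small $\rho$ from Lemma~\ref{lem:uniqueness_small_rho}, then nondegeneracy of $D+\rho e^{u}$ from Lemma~\ref{lem:non-degenerate_operator} with $L=-D$, $k=1$, using $0<\rho e^{u}<\rho^*e^{M}=\theta_{p,m}$, then continuation as in \cite{MR1283323}. The paper obtains that pointwise bound for \emph{every} solution only by citing Theorem~\ref{thm:refined_radial_symmetric_and_monotone}, and the same citation underlies Lemma~\ref{lem:uniqueness_small_rho}. That theorem is proved only for radially symmetric monotone solutions on $X_{m,d}$. It therefore covers the solution constructed in Section 4, but not an arbitrary solution on $X_m$. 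You correctly single this out as the crux, but neither of your two routes supplies it, so there is a genuine gap.

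\textbf{The first route is circular.} Running the digit-by-digit argument of Theorem~\ref{thm:radially_symmetric_small_rho} at the threshold $(1-p^{-m})e^{-M}$ requires $t\mapsto\rho p^{d}e^{t}-Kt$ to be monotone on the range of $u$. At the coarsest level this means $\rho e^{\sup u}\le 1-p^{-m}$, so the argument needs $\sup u\le M$ as input. On $X_{m,d}$ the paper avoided this circularity only through the free bound $u\le d\ln p$ from $\sum_i e^{u_i}=p^d$. That bound yields the threshold $p^{-d}-p^{-d-m}\to0$. On $X_m$, the normalization $\int e^{u}\,d\mu=1$ gives no pointwise bound at all. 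Your second and third steps (continuity in $V_{rad}$ and the counting of Lemma~\ref{lem:refined_upperbound}) only ever see the radial branch.

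\textbf{The fallback is invalid.} A second solution need not bifurcate from the branch issuing from small $\rho$. It could lie on a different component of the solution set, such as an isola, or a branch along which $\max u\to+\infty$ at some $\rho_1>0$. Such components cannot be excluded without a priori bounds. Nondegeneracy along your branch says nothing about such a solution. Nor can you invoke nondegeneracy at that solution, since it is not known to satisfy $\rho e^{u}<\theta_{p,m}$.

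\textbf{The continuation itself also depends on the bound.} The implicit function theorem gives only local uniqueness near a known solution. Openness of your set $S$ needs solutions at $\rho_n\to\rho_0$ to converge, i.e.\ compactness of the whole solution set. Closedness needs nondegeneracy of a putative second solution at the limit.

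So every step rests on the same missing ingredient: an a priori estimate, valid for all solutions with $\rho$ in compact subintervals of $(0,\rho^*)$, that gives compactness together with $\rho e^{u}<\theta_{p,m}$.
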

\begin{proof}
    By Theorem \ref{thm:refined_radial_symmetric_and_monotone}, we can see that $0<\rho e^u<\rho^*e^M=\theta_{p,m}$. And from Theorem \ref{thm:spectrum_D_general_m} and Lemma \ref{lem:smallest_non-zero_eigenvalue_-A}, the smallest nonzero eigenvalue of $-D$ is
\begin{equation}
    \begin{aligned}
        &\min_{\substack{1\leq k\leq m-1\\i\geq 1\\0\leq j\leq m-1}}\left\{-(1-p^{-1})\lambda_k,\ p^{i-1}+p^{i-2}-\frac{p^{-j}+p^{j+1-m}}{p}\right\}\\
        =&\min\{-(1-p^{-1})\lambda_1,1-p^{-m}\}\\
        =&\begin{cases}
            (1-p^{-1}),&m=1\\
            -(1-p^{-1})\lambda_1,&m>1
        \end{cases}\\
        =&\theta_{p,m}.
    \end{aligned}
\end{equation}
Then from Lemma \ref{lem:non-degenerate_operator}, the linearized operator is nondegenerate, then we obtain the uniqueness.
\end{proof}

\section{Mean field equation on Elliptic curves with good reduction}

Given an elliptic curve $E$ over a non-Archimedean field, it either has good reduction or is a Tate curve at any non-Archimedean places after taking finite field extension. First consider $E(\mathbb{Q}_p)$, which can be generalized to the finite extension of $\mathbb{Q}_p$. And consider the reduction group homomorphism
\begin{align*}
    \varphi:E(\mathbb{Q}_p)&\to E(\mathbb{F}_p)\\
    x&\mapsto\bar{x}
\end{align*}
then $\ker\varphi\cong p\mathbb{Z}_p$, where $\cong$ denotes analytic isomorphism.
Huang defined such operator $\mathcal{D}$ on $E(\mathbb{Q}_p)$ as
\begin{align*}
    \mathcal{D}u(x)=c_p\int_{\substack{z\in E(\mathbb{Q}_p)\\ \bar{z}=\bar{x}}}\left(\frac{1}{|z-x|_p^2}+c\right)(u(x)-u(z))dz+d\int_{\substack{z\in E(\mathbb{Q}_p)\\ \bar{z}\neq\bar{x}}}(u(x)-u(z))dz,
\end{align*}
where $c_p=\frac{p(p-1)}{p+1}$, $c=\frac{p+1-N}{N}$, $d=\frac{p(p-1)}{N}$ and $N=\#E(\mathbb{F}_p)$. Then we have $c_p(1+c)=d$ and $V=|E(\mathbb{Q}_p)|=\#E(\mathbb{F}_p)|p\mathbb{Z}_p|=\frac{N}{p}=\frac{p-1}{d}$. One can check that the operator $\mathcal{D}$ is self-adjoint and positive semi-definite under the inner product $(f,g)=\int_{E(\mathbb{Q}_p)}f(x)g(x)dx$. 

They showed that the Green's function $h(x,y)$ of $\mathcal{D}$ is the Néron local height function on $E(\mathbb{Q}_p)$ with respect to $y$, i.e.
\begin{align*}
    \mathcal{D}h(x,y)=\delta_y(x)-\frac{1}{V}.
\end{align*}

Therefore, we can promote our results to the Elliptic curve with good reduction. Consider the mean field equation
\begin{align*}
    -\mathcal{D}u+\rho e^u=\rho\delta_y \text{ in }E(\mathbb{Q}_p).
\end{align*}
Consider the quotient space $E(\mathbb{Q}_p)/p^d\mathbb{Z}_p$. Since $E(\mathbb{Q}_p)$ is compact, then $\#E(\mathbb{Q}_p)/p^d\mathbb{Z}_p<\infty$, so similarly, the operator $\mathcal{D}$ on this quotient space reduces to a graph Laplacian on a finite weighted graph. Then from the results in \cite{huang2020existence} we get the existence of the solution. And similar from the arguments in Chapter 3 and Chapter 4, we can get the existence and uniqueness result by investigating the structure of the solution.
\begin{remark}
    In the study of Tate curves (corresponding to bad reduction), since the curve is parametrized by the $p$-adic multiplicative group $\mathbb{Q}_p^{\times}$, the chosen finite quotient space is obtained by taking the quotient by the higher unit group (multiplicative subgroup) $1+p^d\mathbb{Z}_p$; whereas for an elliptic curve $E(\mathbb{Q}_p)$ with good reduction, the formal group of its reduction kernel is locally analytically isomorphic to the additive group, and hence we quotient out the additive subgroup $p^d\mathbb{Z}_p$.
    
    Although superficially one quotients out an additive group while the other quotients out a multiplicative group, in the theory of $p$-adic Lie groups these two are locally completely identical as analytic objects. The classical $p$-adic exponential map furnishes a strict analytic isomorphism from the additive group to the higher unit group:
    \begin{align*}
        \exp:p^d\mathbb{Z}_p&\to1+p^d\mathbb{Z}_p\\
        x&\mapsto e^{px}
    \end{align*}
     Consequently, whether we quotient by $p^d\mathbb{Z}_p$ or by $1+p^d\mathbb{Z}_p$, the geometric essence is entirely the same: namely, one employs the chain of analytic subgroups of the local neighborhood (a $p$-adic disk) of the manifold to perform a truncation, thereby discretizing the continuous geometric space together with the pseudo-differential operator into matrix equations that can be rigorously solved on finite weighted graphs. As $d\to\infty$, both schemes converge back to their respective underlying continuous compact manifolds.
\end{remark}

\section*{Acknowledgement}
I am deeply indebted to An Huang for his patient guidance, insightful discussions, and unwavering support throughout this research. I extend my sincere gratitude to my supervisor Bobo Hua for his valuable suggestions and help.

\bibliographystyle{plain}
\bibliography{references}

\noindent Yaojia Sun, 24210180117@m.fudan.edu.cn\\
\emph{School of Mathematical Sciences, Fudan University, Shanghai, 200433, P.R. China}\\[-8pt]
\restoregeometry
\end{document}